\newlength\myindent
\newcommand{\comm}[1]{}
\newcommand{\pacomm}[1]{{}}
\newcommand{\delete}[1]{}
\newcommand\defeq{\coloneqq}
\newcommand{\Rmnum}[1]{\expandafter\@slowromancap\romannumeral #1@}
\newcommand{\inner}[3][]{{\left\langle #2,#3 \right\rangle_{#1}}}
\newtheorem{definition}{Definition}[section]
\newtheorem{theorem}{Theorem}[section]
\newtheorem{lemma}{Lemma}[section]
\newtheorem{corollary}{Corollary}[section]
\numberwithin{equation}{section}
\DeclareMathOperator{\diag}{\mathrm{diag}}
\DeclareMathOperator{\sspan}{\mathrm{span}}
\DeclareMathOperator{\relu}{\mathrm{relu}}
\newcommand{\R}{\mathbb{R}}
\newcommand{\eps}{\epsilon}
\newcommand{\xo}{x_*}
\begin{document}

\title{A Provably Convergent Scheme  for Compressive Sensing under Random Generative Priors
}


\author{Wen Huang \footnotemark[2] \footnotemark[6] \and
        Paul Hand \footnotemark[3]  \and
        Reinhard Heckel \footnotemark[4]\and
        Vladislav Voroninski  \footnotemark[5]
}

\renewcommand{\thefootnote}{\fnsymbol{footnote}}

\footnotetext[2]{Department of Mathematical Sciences, Xiamen University, China}
\footnotetext[3]{Department of Mathematics and College of Computer and Information Science, Northeastern University, USA.}
\footnotetext[4]{Department of Electrical and Computer Engineering, Rice University, USA.}
\footnotetext[5]{Helm.ai, USA}
\footnotetext[5]{Corresponding author. E-mail: huwst08@gmail.com.}


\newcommand\genericf{f}

\date{December 10, 2018}

\maketitle

\begin{abstract}
Deep generative modeling has led to new and state of the art approaches for enforcing structural priors in a variety of inverse problems.  In contrast to priors given by sparsity, deep models can provide direct low-dimensional parameterizations of the manifold of images or signals belonging to a particular natural  class, allowing for recovery algorithms to be posed in a low-dimensional space.  This dimensionality may even be lower than the sparsity level of the same signals when viewed in a fixed basis.  What is not known about these methods is whether there are computationally efficient algorithms whose sample complexity is optimal in the dimensionality of the representation given by the generative model.  In this paper, we present such an algorithm and analysis.  Under the assumption that the generative model is a neural network that is sufficiently expansive at each layer and has Gaussian weights, we provide a gradient descent scheme and prove that for noisy compressive measurements of a signal in the range of the model, the algorithm converges to that signal, up to the noise level.  The scaling of the sample complexity with respect to the input dimensionality of the generative prior is linear, and thus can not be improved except for constants and factors of other variables.  To the best of the authors' knowledge, this is the first recovery guarantee for compressive sensing under generative priors by a computationally efficient algorithm.

\end{abstract}

\section{Introduction}

Generative models have greatly improved the state of the art in computer vision and image processing, including inpainting, super resolution, compression, compressed sensing, image manipulation, MRI imaging, and denoising \cite{GAN,oord2016pixel,mao2016image,yeh2017semantic,sonderby2016amortised,ledig2016photo,johnson2016perceptual,mousavi2015deep,mousavi2017learning,zhu2016generative,rippel2017real,mardani2017deep,mardani2017recurrent,mousavi2015deep,mousavi2017learning}.
At the heart of this success is the ability of generative models to sample from a good approximation of the manifold of natural signals or images relevant for a particular task.  These models can be efficiently learned, in an unsupervised way, from a collection of examples of images of that signal class.  The progress in generative modeling over just the past two years has been immense; as an example, multiple methods can now generate synthetic photorealistic images of celebrity faces \cite{karras2017progressive,kingma2018glow}.  Generative models have also been trained in other contexts, and the quality of such models is expected to keep improving.

A reason for the success of some generative models in inverse problems is that they can efficiently map a low-dimensional space, called a latent code space, to an estimate of the natural signal manifold in high-dimensional signal space.  That is, generative models can provide an explicit parameterization of an approximation of the natural signal manifold.  This property then allows signal recovery problems to be posed in a low dimensional space.  In contrast, a standard structural model for natural signals over the past decade or so has been sparsity in an appropriate basis, such as a wavelet basis.  This perspective leads to a high dimensional optimization problem that cannot directly be solved and instead relies on a convex relaxation.  While this relaxation has been quite successful in linear regimes, such as compressive sensing, it has not yet been effective in nonlinear regimes, such as compressive phase retrieval.

In comparison to sparsity-based methods, generative methods can permit lower dimensional representations of some signal classes.  To see how lower dimensional representations are possible, consider the following toy model.  Consider a 1-parameter family  of high resolution natural images of a toy train rolling down wooden tracks.  The representation of each image in terms of a wavelet bases will be approximately sparse, relative to the image dimensionality, but accurate reconstruction will still require many wavelet coefficients.  In contrast, if that collection of images was directly modeled as a one-dimensional manifold, recovery should be possible with approximately 1 measurement.

In comparison to sparsity based methods, generative methods can be exploited more efficiently in some contexts.  For example, in compressive phase retrieval \cite{hand2018phase} show that the empirical risk objective under suitable assumptions has a favorable optimization landscape, in that there are no spurious local minima, when the number of generic measurements is linear in the latent dimensionality of a random generative model.  In contrast, no algorithm for compressive phase retrieval is known to succeed under less than $O(s^2)$ generic measurements, where $s$ is the signal sparsity.

In the context of compressive sensing from linear measurements and under generative priors, recovery can be posed as a nonconvex empirical risk optimization, which can be solved by first order gradient methods.  When solved this way, generative models have been shown to empirically outperform sparsity models in the sense that they can give comparable reconstruction error with 5-10x fewer compressive measurements in some contexts.  This empirical result indicates both that representations from generative models are low dimensional and can be efficiently exploited.  Nonetheless, this observation does not have a firm theoretical footing.  In principle, such gradient algorithms for nonconvex programs could get stuck in local minima.  Thus, it is important to provide algorithms that provably recover the underlying signal.



In this paper, we introduce a gradient descent algorithm for empirical risk minimization under a generative network, given noisy compressive measurements of its output.  We prove that if the network is random, the size of each layer grows appropriately, there are a sufficient number of compressive measurements, and the magnitude of the noise is sufficiently small, then the gradient descent algorithm converges to a neighborhood of the global optimizer and the size of the neighborhood only depends on the magnitude of the noise. In particular, the gradient descent algorithm converges to the global minimizer for noiseless measurements.  To the best of our knowledge, this is the first recovery guarantee for compressive sensing under a generative neural network model. Using numerical experiments, we empirically verify recovery up to the noise level, and in particular exact recovery in the noiseless case.

The justification for studying random networks is as follows.  First, the weights of some neural networks trained on real data exhibit statistics consistent with Gaussians.  Second, the theory for inverse problems under generative priors is nascent and challenging even for Gaussian networks.  Third, it is not immediately clear what model for the weights of trained generative models is most realistic while maintaining mathematical tractability.  And finally, random neural networks have recently been shown to be useful for image processing tasks \cite{ulyanov2017deep,heckel2018deep}; thus, any theoretical analysis of them may be relevant for those contexts.

\subsection{Relation to previous theoretical work}

A first theoretical analysis of compressive sensing under a generative prior appeared in \cite{BJPD2017}.  In that work, the authors studied the task of recovering a signal near the range of a generative network by the same nonconvex empirical risk objective as in the present paper.  They establish that if the number of measurements scales linearly in the latent dimensionality, then \textit{if one can solve to global optimality}  the nonconvex empirical risk objective, then one recovers the signal to within the noise level and representational error of the network.  Because the objective is nonconvex, and nonconvex problems are NP-hard in general, it is not clear that any particular computationally efficient optimization algorithm can actually find the global optimum.  That is, it is possible that any particular numerically efficient optimization algorithm gets stuck in local minima. In the present paper, we provide a specific computationally efficient numerical algorithm and establish a recovery guarantee for compressive sensing under generative models that satisfy suitable architectural assumptions.

A recent paper by a subset of the authors~\cite{HV17} provides a global analysis of the nonconvex empirical risk objective below for expansive Gaussian networks.
The paper shows that, under appropriate conditions, there are descent directions, of the nonconvex objective, outside neighborhoods of the global optimizer and a negative multiple thereof in the latent code space. That work, however, does not provide an analysis of the behavior of the empirical risk objective within these two neighborhoods, a specific algorithm, a proof of convergence of an algorithm, or a principled reason why the negative multiple of the global optimizer would not be returned by a naively applied gradient scheme.  Additionally, that work does not study noise tolerance.  Each of these aspects require considerable technical advances, for example establishing a nontrivial convexity-like property near the global minimizer.

The paper~\cite{ALM2015} presents a simple layer-wise inversion process for neural networks.  In the current setting, this result is not applicable because the final compressive layer can not be directly inverted without structural assumptions.  Instead, in the present paper, we analyze the inversion of the compressive measurements and the generative network together.

\section{Problem statement}

We consider a generator
$G\colon \R^k \to \R^n$ with $k \ll n$, given by a $d$-layer network of the form
$$
G(x) = \relu(W_d \ldots \relu(W_2 \relu( W_1 x)) \ldots ),
$$
where $\relu(a) = \max(a, 0)$ applies entrywise, $W_i \in \R^{n_i \times n_{i - 1}}$ are the weights of the network,  and $n_0 = k$ and $n_d=n$ are, respectively, the dimensionality of the input and output of $G$.  This model for $G$ is a $d$-layer neural network with no bias terms.
Let $\mathfrak{y}_* = G(x_*) \in \R^n$ be an image in the range of the generator $G$,  and let $A \in \R^{m \times n}$ be a measurement matrix, where typically $m \ll n$.

Our goal is to estimate the image $\mathfrak{y}_*$ from noisy compressive measurements $y = A G(x_*) + e$, where $A$ and $G$ are known and $e \in \mathbb{R}^m$ is an unknown noise vector.  To estimate this image, we first estimate its latent code, $x$, and then compute $G(x)$.
In order to estimate  $x_*$, we consider  minimization of  the empirical risk
\begin{equation}
\min_{x \in \R^k} \frac{1}{2}\|A G(x) - y\|^2. \label{GA:e1}
\end{equation}
For notational convenience, we let $W_{+, x}$ denote the matrix obtained by zeroing out the rows of $W$ that do not have a positive dot product with $x$, i.e.,
$$
W_{+, x} = \diag(W x > 0) W,
$$
where $\diag(w)$ denote the diagonal matrix whose $i,i$th entry is 1 if $w_i$ is true and 0 otherwise. with the entries of a vector $w$ on the diagonal.
Furthermore, we define $W_{1, +, x} = (W_1)_{+, x} = \diag(W_1 x > 0) W_1$ and
$$
W_{i, +, x} = \diag(W_i W_{i - 1, +, x} \cdots W_{2, +, x} W_{1, +, x} x > 0) W_i.
$$
The matrix $W_{i,+,x}$ contains the rows of $W_i$ that are active after taking a ReLU if  the input to the network is $x$.
Therefore, under the model for $G$, the empirical risk~\eqref{GA:e1} becomes
\begin{equation} \label{GA:e2}
f(x) = \frac{1}{2} \left\| A \left(\prod_{i = d}^1 W_{i, +, x}\right) x - A \left(\prod_{i = d}^1 W_{i, +, x_*}\right) x_* - e \right\|^2.
\end{equation}


\section{Main Results:
Two algorithms and a convergence analysis
} \label{GA:s1}

In this section, we propose two closely related algorithms for minimizing the empirical loss~\eqref{GA:e1}.  The first algorithm is a subgradient descent method which is provably convergent.  The second algorithm is a practical implementation that can be directly implemented with an explicit form of the gradient step that may or may not be within the subdifferential of the objective at some points.

\subsection{A provably convergent subgradient descent method}

In order to state the first algorithm,  Algorithm~\ref{GA:a1}, we first introduce the notion of a subgradient.
Since the cost function $f(x)$ is continuous, piecewise quadratic, and not differentiable everywhere, we use the notion of a generalized gradient, called the Clarke subdifferential or generalized subdifferential \cite{Clason2017}. If a function $\genericf$ is Lipschitz from a Hilbert space $\mathcal{X}$ to $\mathbb{R}$, the Clarke generalized directional derivative of $\genericf$ at the point $x \in \mathcal{X}$ in the direction $u$, denoted by $\genericf^o(x; u)$, is defined by $\genericf^o(x; u) = \limsup_{y \rightarrow x, t \downarrow 0} \frac{\genericf(y + t u) - \genericf(y)}{t}$, and the generalized subdifferential of $\genericf$ at $x$, denoted by $\partial \genericf(x)$, is defined by
\begin{equation*}
\partial \genericf(x) = \{v \in \mathbb{R}^k \mid \inner[]{v}{u} \leq \genericf^o(x; u), \forall u \in \mathcal{X}\}.
\end{equation*}
Any vector in $\partial \genericf(x)$ is called a subgradient of $\genericf$ at $x$. Note that if $\genericf$ is differentiable at $x$, then $\partial \genericf(x) = \{\nabla \genericf(x)\}$.  We can now state Algorithm \ref{GA:a1}.

\begin{algorithm}
\caption{Provably convergent subgradient descent method}
\label{GA:a1}
\begin{algorithmic}[1]
\REQUIRE Weights of the network $W_i$; noisy observation $y$; and step size $\nu > 0$;
\STATE Choose an arbitrary initial point $x_0 \in \mathbb{R}^k \setminus \{0\}$;
\FOR {$i = 0, 1, \ldots$}
\IF {$f(-x_{i}) < f(x_{i})$}  \label{GA:a1:st3}
\STATE $\tilde{x}_{i} \gets - x_{i}$; 
\ELSE
\STATE $\tilde{x}_i \gets x_i$;
\ENDIF \label{GA:a1:st8}
\STATE   Compute \label{GA:a1:st1} $v_{\tilde{x}_i} \in \partial f(\tilde{x}_i)$, in particular, if $G$ is differentiable at $\tilde{x}_i$, then set $v_{\tilde{x}_i} = \tilde{v}_{\tilde{x}_i}$,  where
$$
\tilde{v}_{\tilde{x}_i} := \left(\prod_{i = d}^1 W_{i, +, \tilde{x}_i}\right)^T A^T( A \left(\prod_{i = d}^1 W_{i, +, \tilde{x}_i}\right) \tilde{x}_i - y);
$$
\STATE $x_{i+1} = \tilde{x}_i - \nu {v}_{\tilde{x}_i}$;
\ENDFOR
\end{algorithmic}
\end{algorithm}




This subgradient method has an important twist.  In lines \ref{GA:a1:st3}--\ref{GA:a1:st8}, the algorithm checks whether negating the current iterate of the latent code causes a lower objective, and if so accepts that negation.  The motivation for this step is as follows:  In expectation, the empirical loss $f$ has a global minimum at $x_*$, a local maximum at 0, and a critical point at $-x_* \rho_d$, where $\rho_d \in (0, 1)$,
as established in~ \cite{HV17}.  Moreover, the empirical loss  $f$ concentrates around  its expectation. Thus, a simple gradient descent algorithm could in principle be attracted to $-x_* \rho_d$, and this check is used in order to ensure that it does not.



\subsection{Convergence analysis for Algorithm~\ref{GA:a1}}

In this section, we prove that Algorithm~\ref{GA:a1} converges to the global minimizer $x_*$ up to an error determined by the noise $e$.  Consequently, the signal estimate $G(x_*)$ is also recovered up an error determined by the noise. In the noiseless case (i.e., $e=0$),  Algorithm~\ref{GA:a1} converges to $x^*$, and $G(x^*)$ is recovered exactly.
Our theorem  relies on two deterministic assumptions about the network $G$ and the sensing matrix $A$.

First, we assume that the weights of the network, $W_i$,  satisfy the Weight Distribution Condition (WDC) defined below.  This condition states that the weights are roughly uniformly distributed over a sphere of an appropriate radius.

\begin{definition}[Weight Distribution Condition (WDC)] \label{GA:df1}
A matrix $W \in \mathbb{R}^{n \times k}$ satisfies the \emph{Weight Distribution Condition} with constant $\epsilon$ if for all nonzero $x, y \in \mathbb{R}^k$, it holds that
\begin{equation*}
\left\| \sum_{i = 1}^n 1_{w_i \cdot x > 0} 1_{w_i \cdot y > 0} \cdot w_i w_i^T - Q_{x, y} \right\| \leq \epsilon, \hbox{ with } Q_{x, y} = \frac{\pi - \theta}{2 \pi} I + \frac{\sin \theta}{2 \pi} M_{\hat{x} \leftrightarrow \hat{y}}.
\end{equation*}
Here, $w_i^T \in \mathbb{R}^k$ is the $i$th row of $W$; $M_{\hat{x}} \in \mathbb{R}^{k \times k}$ is the matrix such that $\hat{x} \mapsto \hat{y}$, $\hat{y} \mapsto \hat{x}$, and $\hat{z} \mapsto 0$ for all $z \in \sspan(\{x, y\})^\perp$; $\hat{x} = x / \|x\|$, and $\hat{y} = y / \|y\|$; $\theta = \angle(x, y)$; and $1_S$ is the indicator function on $S$.
\end{definition}

Second, we assume that the measurement matrix $A$ satisfies an isometry condition with respect to $G$, defined below.

\begin{definition}[Range Restricted Isometry Condition (RRIC)] \label{GA:df2}
A matrix $A \in \mathbb{R}^{m \times n}$ satisfies the \emph{Range Restricted Isometry Condition} with respect to $G$ with constant $\epsilon$ if for all $x_1, x_2, x_3, x_4 \in \mathbb{R}^k$, it holds that
\begin{align*}
&\left| \inner[]{A(G(x_1) - G(x_2))}{A(G(x_3) - G(x_4))} - \inner[]{G(x_1) - G(x_2)}{G(x_3) - G(x_4)}\right| \\
\leq& \epsilon \|G(x_1) - G(x_2)\| \|G(x_3) - G(x_4)\|
\end{align*}
\end{definition}


These deterministic conditions are satisfied with some probability by neural networks and measurement matrices that are such that
\begin{enumerate}[label=(\alph*)]
\item \label{GA:as1} The network weights have i.i.d. $\mathcal{N}(0, 1/n_i)$ entries in the $i$th layer.
\item \label{GA:as2} The network is expansive in each layer, in that
\begin{equation}
\label{eq:condition1}
n_i \geq c \epsilon^{-2} \log(1 / \epsilon) n_{i-1} \log n_{i-1},
\end{equation}
where $c$ is a universal constant and $\eps$ is sufficiently small.
\item \label{GA:as3} The measurement vectors have i.i.d. $\mathcal{N}(0, 1/m)$ entries.
\item \label{GA:as4} There are a sufficient number of measurements in that
\begin{equation}
\label{eq:condition2}
m \geq c \epsilon^{-1} \log(1/\epsilon) d k \log \prod_{i = 1}^d n_i,
\end{equation}
where $c$ is a universal constant and $\eps$ is sufficiently small.
\end{enumerate}

The probability that the WDC and RRIC hold with constant $\epsilon$ under the assumption above  is at least
\begin{align}
\label{eq:probestimate}
1 - \sum_{i = 2}^d \tilde{c} n_i e^{- \gamma n_{i-2}} - \tilde{c} n_1 e^{- \gamma \epsilon^2 \log(1/ \epsilon) k} - \tilde{c} e^{- \gamma \epsilon m},
\end{align}
where $\gamma$, and $\tilde{c}$ are universal constants, as given in~\cite[Proposition~4]{HV17}.
%

As our main theoretical result, we prove that the iterates generated by Algorithm~\ref{GA:a1} converge to $x_*$ up to a term dependent on the noise level. The proof is given in Section~\ref{GA:s2}.

\begin{theorem} \label{GA:th1}
Suppose the WDC and RRIC
hold with $\epsilon \leq K_1 / d^{90}$ and the noise $e$ obeys $\|e\| \leq \frac{K_2 \|x_*\|}{d^{42} 2^{d/2}}$. Consider the iterates $\{x_i\}$ generated by Algorithm~\ref{GA:a1} with step size $\nu = K_3 \frac{2^d}{d^2}$. Then there exists a number of iterations, denoted by $N$ and  upper bounded by $N \leq \frac{K_4 f(x_0) 2^d}{d^4 \epsilon \|x_*\|}$ such that
\begin{equation}\label{GA:e63}
\|x_N - x_*\| \leq K_5 d^9 \|x_*\| \sqrt{\epsilon} + K_6 d^6 2^{d/2} \|e\|.
\end{equation}
In addition, for all $i \geq N$, we have
\begin{align}
\|x_{i + 1} - x_*\| &\leq C^{i + 1 - N} \|x_N - x_*\| + K_7 2^{d/2} \|e\| \hbox{ and }  \label{GA:e68} \\
\|G(x_{i + 1}) - G(x_*)\| &\leq \frac{1.2}{2^{d/2}} C^{i + 1 - N} \|x_N - x_*\| + 1.2 K_7 \|e\|, \label{GA:e74}
\end{align}
where $C = 1 - \frac{\nu}{2^d} \frac{7}{8} \in (0, 1)$.
Here, $K_1, K_2, K_3$, $K_4$, $K_5$, $K_6$, and $K_7$ are universal positive constants.
\end{theorem}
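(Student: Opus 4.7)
\emph{Overall strategy.} The plan is to split the analysis into two regimes and to use the twist in lines~\ref{GA:a1:st3}--\ref{GA:a1:st8} of Algorithm~\ref{GA:a1} to rule out the spurious critical point near $-\rho_d x_*$ identified in~\cite{HV17}. Let $\calN$ denote the neighborhood of $x_*$ of radius $r_* := K_5 d^9 \|x_*\|\sqrt{\epsilon} + K_6 d^6 2^{d/2}\|e\|$. In Phase~1 (descent) I show that whenever $\tilde x_i \notin \calN$ the step decreases $f$ by at least $\Omega(d^4 \epsilon \|x_*\|^2/2^d)$, so $x_N \in \calN$ after at most $N = O(f(x_0) 2^d / (d^4 \epsilon \|x_*\|))$ iterations. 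In Phase~2 (contraction) I show that once $\tilde x_i \in \calN$, the iteration strictly contracts toward $x_*$ at rate $C$, up to a $O(2^{d/2}\|e\|)$ noise floor. The common computational tool is a reduction of $\tilde v_{\tilde x}$: the WDC replaces $(\prod_i W_{i,+,x})^T (\prod_i W_{i,+,y})$ by the deterministic $d$-fold composition of population matrices of the form $Q_{\cdot,\cdot}/2^d$, and the RRIC absorbs $A^T A - I$ against any pair of vectors of the form $G(x)-G(x_*)$. After both reductions, $\tilde v_{\tilde x} = h_{\tilde x, x_*} + \eta_{\tilde x}$, where $h_{\tilde x,x_*}$ is the concentration limit already studied in~\cite{HV17} and $\eta_{\tilde x}$ collects the WDC, RRIC and noise errors of quantifiable size.

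\emph{Phase~1.} Proposition~4 of~\cite{HV17}, combined with the above concentration, yields $\|h_{x,x_*}\| \gtrsim d^{-2} 2^{-d} \|x\|$ outside $\calN \cup \calN(-\rho_d x_*)$, with $h_{x,x_*}$ making a bounded angle with $x - x_*$ (resp.\ $x + \rho_d x_*$). The twist always selects the iterate in the half-space closer to $x_*$, eliminating the $-\rho_d x_*$ trap. With $\nu = K_3 2^d/d^2$, a standard one-step descent estimate using $\|\tilde v_{\tilde x}\| = O(\|x\|/2^d)$ produces the per-step decrement $f(x_{i+1}) - f(\tilde x_i) \leq -\Omega(d^4 \epsilon \|x_*\|^2 / 2^d)$; dividing $f(x_0)$ by this decrement gives the bound on $N$ and~(\ref{GA:e63}), since the termination of Phase~1 is precisely the event $x_N \in \calN$.

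\emph{Phase~2.} Inside $\calN$ I will establish the strong-convexity-like pair
\begin{align*}
\inner[]{v_{\tilde x}}{\tilde x - x_*} &\geq \frac{7}{8 \cdot 2^d}\|\tilde x - x_*\|^2 \;-\; c_1 \frac{\|e\|\,\|\tilde x - x_*\|}{2^{d/2}}, \\
\|v_{\tilde x}\| &\leq \frac{c_2}{2^d}\|\tilde x - x_*\| \;+\; \frac{c_3 \|e\|}{2^{d/2}},
\end{align*}
uniformly on $\calN$. Expanding $\|\tilde x_i - \nu v_{\tilde x_i} - x_*\|^2$ and choosing $\nu = K_3 2^d/d^2$ small enough that the second-order cross term is absorbed into the $7/8$-factor yields $\|x_{i+1} - x_*\| \leq C \|x_i - x_*\| + K_7 2^{d/2}\|e\|$ with $C = 1 - (\nu/2^d)(7/8) \in (0,1)$, where I have used $\|\tilde x_i - x_*\| \leq \|x_i - x_*\|$ (a consequence of the twist). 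Iterating this recursion from $i = N$ onwards gives~(\ref{GA:e68}), and~(\ref{GA:e74}) then follows from the WDC-based bound $\|G(x) - G(x_*)\| \leq 1.2 \cdot 2^{-d/2}\|x - x_*\|$, which is valid on~$\calN$ because $\theta_{x,x_*}$ is controlled there.

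\emph{Main obstacle.} The technically hardest step, and the bottleneck that drives the polynomial-in-$d$ constants of the theorem, is the local strong-convexity-like pair above. The loss $f$ is only piecewise quadratic, the ReLU activation patterns at $\tilde x$ and $x_*$ generically disagree, and $h_{\tilde x, x_*}$ depends nonlinearly on $\theta_{\tilde x, x_*}$ through a $d$-fold product of angle-dependent matrices. Recovering the exact coefficient $7/8$ requires a careful Taylor expansion of $h_{\tilde x, x_*}$ about $\theta_{\tilde x, x_*} = 0$, together with uniform control of the WDC, RRIC and noise errors on $\calN$. The amplification of angle-perturbation errors through $d$ layers is what forces the extreme hypothesis $\epsilon \leq K_1/d^{90}$ and the $d^9$, $d^6$ factors in~(\ref{GA:e63}).
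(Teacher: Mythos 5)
Your overall architecture matches the paper's: a descent phase outside a small exceptional set giving the bound on $N$, the negation check to exclude the neighborhood of $-\rho_d x_*$, and a local linear contraction with rate $C=1-\frac{\nu}{2^d}\frac{7}{8}$ plus a $2^{d/2}\|e\|$ noise floor, all built on the concentration $\tilde v_x = h_{x,x_*}+\eta_x$. Your Phase 2 differs only cosmetically: you posit a strong-convexity/smoothness pair and expand $\|x_{i+1}-x_*\|^2$, while the paper proves the stronger pointwise statement $\|v_x-\frac{1}{2^d}(x-x_*)\|\le\frac{1}{8\cdot 2^d}\|x-x_*\|+\frac{2}{2^{d/2}}\|e\|$ and applies the triangle inequality directly; both yield the same $C$. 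However, there is a genuine gap in Phase 1. You invoke ``a standard one-step descent estimate,'' but the standard estimate requires a Lipschitz gradient, and $f$ is piecewise quadratic with a discontinuous gradient; moreover the surrogate $h_x$ is Lipschitz only away from the origin, with a Lipschitz constant that blows up like $\max(1/\|x\|,1/\|y\|)$. The paper must therefore (i) apply the mean value theorem with Clarke subgradients, (ii) prove $h$ is Lipschitz outside a ball around $0$, and (iii) prove separately that after at most $(\frac{19\pi 2^d}{16\nu})^2$ steps the entire segment $t\tilde x_i+(1-t)x_{i+1}$ stays outside $\mathcal{B}(0,\frac{1}{32\pi}\|x_*\|)$, since near the origin (a local maximum of the expected loss) the Lipschitz-type control of the subgradient along the step fails. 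Your outline never mentions the origin, and without that ingredient the per-iteration decrement cannot be established.

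A second, smaller gap: in Phase 2 you use ``$\|\tilde x_i-x_*\|\le\|x_i-x_*\|$ (a consequence of the twist).'' That is not a consequence of the twist, which compares $f(-x_i)$ with $f(x_i)$, not distances to $x_*$. To conclude $\tilde x_i=x_i$ once $x_i$ is near $x_*$ --- and, at the end of Phase 1, to conclude that the iterate reaching $S_\beta$ lands in the component near $x_*$ rather than the one near $-\rho_d x_*$ --- one needs the function-value comparison $f(x)<f(y)$ for all $x\in\mathcal{B}(\phi_d x_*, a_4 d^{-10}\|x_*\|)$ and $y\in\mathcal{B}(-\phi_d x_*, a_4 d^{-10}\|x_*\|)$, uniformly over $\phi_d\in[\rho_d,1]$ and robust to the noise and to the $\epsilon$-perturbations allowed by the WDC and RRIC; this is a separate lemma in the paper and consumes part of the $d^{-42}$ noise budget through the lower bound $1-\rho_d\gtrsim d^{-2}$. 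Your appeal to ``the half-space closer to $x_*$'' elides exactly this point.
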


Theorem~\ref{GA:th1} shows that after a certain number of iterations $N$, an iterate of Algorithm~\ref{GA:a1} is in a neighborhood of the true latent code $x_*$, and the size of this neighborhood depends on the sample complexity parameter $\epsilon$ and the noise $e$ (see~\eqref{GA:e63}). Furthermore, by~\eqref{GA:e68}, the theorem  guarantees that once the iterates are in this ball, they converge linearly to a smaller neighborhood of $x_*$, and the size of the neighborhood only depends on the noise term~$e$. If the noise term is zero, the algorithm converges linearly to $x_*$. Similarly, it follows from~\eqref{GA:e74} that the recovered image $G(x_i)$ converges to $G(x_*)$ up to the noise.

Note that the factors $2^d$ in the theorem are present because the weights of the coefficients of the matrices $W_i$ have variance $1 / n_i$. As a result, the operation $\relu(W x)$ returns approximately half of the entries of $W x$. Becuase of this, $G(x)$ scales like $2^{d / 2} \|x\|$, the noise $e$ scales like $2^{-d/2}$, and the step size $\nu$ scales like $2^d$. All of these scalings would be unity with an alternate choice of the variance of the entries of $W_i$.

Combining Proposition~4 from the paper~\cite[Proposition~4]{HV17}, with Theorem~\ref{GA:th1} yields the following corollary.

\begin{corollary}
Consider an expansive generative neural network $G$ that satisfies~\ref{GA:as1} and~\ref{GA:as2}, and let the measurements satisfy~\ref{GA:as3} and~\ref{GA:as4}.
Suppose $\epsilon < K_1 / d^{90}$ and
$\|e\| \leq \frac{K_2 \|x_*\|}{d^{42} 2^{d/2}}$.
Then, at least with probability~\eqref{eq:probestimate}, the iterates $\{x_i\}$ generated by Algorithm~\ref{GA:a1} with step size $\nu = K_3 \frac{2^d}{d^2}$ satisfies the following:
There exists a number of step $N$ upper bounded by $N \leq \frac{K_4 f(x_0) 2^d}{d^4 \epsilon \|x_*\|}$ such that
\[
\|x_N - x_*\| \leq K_5 d^9 \|x_*\| \sqrt{\epsilon} + K_6 d^6 2^{d/2} \|e\|.
\]
In addition, for all $i \geq N$, we have
\begin{align*}
\|x_{i + 1} - x_*\| &\leq C^{i + 1 - N} \|x_N - x_*\| + K_7 2^{d/2} \|e\|, \\
\|G(x_{i + 1}) - G(x_*)\| &\leq \frac{1.2}{2^{d/2}} C^{i + 1 - N} \|x_N - x_*\| + 1.2 K_7 \|e\|,
\end{align*}
where $C = 1 - \frac{\nu}{2^d} \frac{7}{8} \in (0, 1)$.
Here, $\gamma$, $\tilde{c}$, $K_1, K_2, K_3$, $K_4$, $K_5$, $K_6$, and $K_7$ are universal positive constants.
\end{corollary}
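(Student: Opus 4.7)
The corollary is essentially a probabilistic wrapper around Theorem~\ref{GA:th1}, so the proof plan is to decouple the deterministic convergence statement from the random model and then invoke an existing concentration result.

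First I would condition on the event that the weight matrices $W_i$ satisfy the WDC with constant $\epsilon$ simultaneously for $i = 1, \ldots, d$, and that the measurement matrix $A$ satisfies the RRIC with respect to $G$ with the same constant $\epsilon$. Proposition~4 of \cite{HV17} (cited just before the statement) gives precisely the tail bound~\eqref{eq:probestimate} for this joint event under assumptions \ref{GA:as1}--\ref{GA:as4}, provided $\epsilon$ is sufficiently small relative to $d$ and the layer widths satisfy~\eqref{eq:condition1} and~\eqref{eq:condition2}. The hypotheses of the corollary on $\epsilon$ (namely $\epsilon < K_1/d^{90}$) and on the layer widths are exactly what is required to feed into that proposition, so nothing new must be proved there.

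Next, on this high-probability event, the WDC and RRIC hold as deterministic conditions, and therefore Theorem~\ref{GA:th1} applies directly. The step-size choice $\nu = K_3 \, 2^d / d^2$ and the noise bound $\|e\| \le K_2 \|x_*\|/(d^{42} 2^{d/2})$ in the corollary match the hypotheses of the theorem verbatim, so the conclusions~\eqref{GA:e63},~\eqref{GA:e68}, and~\eqref{GA:e74} of Theorem~\ref{GA:th1}, together with the iteration-count bound $N \le K_4 f(x_0) 2^d/(d^4 \epsilon \|x_*\|)$, transfer unchanged. Thus the three displayed inequalities in the corollary follow immediately by restating those of Theorem~\ref{GA:th1} with the universal constants $K_1,\ldots,K_7$ inherited from the theorem and $\gamma, \tilde c$ inherited from \cite[Proposition~4]{HV17}.

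The only genuinely non-routine point is verifying that no circularity or constant mismatch arises: the constant $\epsilon$ that controls the WDC/RRIC is the same $\epsilon$ that governs both the sample-complexity bounds \eqref{eq:condition1}--\eqref{eq:condition2} and the theorem's smallness assumption $\epsilon \le K_1/d^{90}$. I would state at the start of the proof that we fix a single $\epsilon < K_1/d^{90}$, apply Proposition~4 of \cite{HV17} with that $\epsilon$ to obtain the event of probability~\eqref{eq:probestimate} on which both conditions hold simultaneously, and then conclude by citing Theorem~\ref{GA:th1}. This is the main (and essentially only) obstacle, and it is a bookkeeping issue rather than a mathematical one; all of the real work is already packaged inside Theorem~\ref{GA:th1} and \cite[Proposition~4]{HV17}.
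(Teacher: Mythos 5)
Your proposal is correct and matches the paper's approach exactly: the paper derives the corollary by the one-line observation that \cite[Proposition~4]{HV17} supplies the event of probability~\eqref{eq:probestimate} on which the WDC and RRIC hold, and Theorem~\ref{GA:th1} then applies deterministically on that event. Your extra care about using a single fixed $\epsilon$ throughout is sound bookkeeping but introduces nothing beyond what the paper already does implicitly.
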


\subsection{Practical Algorithm}

The empirical risk objective is nondifferentiable on a set of measure zero.  At points of nondifferentiability, Algorithn~\ref{GA:a1} requires selection of a subgradient $\partial f(\tilde{x}_i)$.  Such a subgradient could be determined by computing $\nabla f(\tilde{x}_i + \delta w)$ for a random $w$ and sufficiently small $\delta$.  This is because $f(x)$ is a piecewise quadratic function,  and by~\cite[Theorem~9.6]{Clason2017}, we can express the sub-differential as
\begin{equation} \label{GA:e56}
\partial f(x) = conv(v_1, v_2, \ldots, v_t),
\end{equation}
where $conv$ denotes the convex hull of the vectors $v_1, \ldots, v_t$, $t$ is the number of quadratic functions adjoint to $x$, and $v_i$ is the gradient of the $i$-th quadratic function at $x$.  Because this computation of a subgradient is not explicit, we propose another algorithm, Algorithm~\ref{GA:a3}, where the step direction is simply chosen as ${v}_{\tilde{x}_i} = \tilde{v}_{\tilde{x}_i}$. In practice, it is extremely unlikely to have an iterate on which the function is not differentiable. 
In other words, Algorithm~\ref{GA:a1} reduces in practice to Algorithm~\ref{GA:a3}. However, strictly speaking, the convergence analysis does not apply for Algorithm~\ref{GA:a3} because of the possibility that $ \tilde{v}_{\tilde{x}_i}$ is not a subgradient at $\tilde{x}_i$.
\begin{algorithm}
\caption{Practical gradient descent method}
\label{GA:a3}
\begin{algorithmic}[1]
\REQUIRE Weights of the network $W_i$; noisy observation $y$; and step size $\nu > 0$;
\STATE Choose an arbitrary initial point $x_0 \in \mathbb{R}^k \setminus \{0\}$;
\FOR {$i = 0, 1, \ldots$}
\IF {$f(-x_{i}) < f(x_{i})$}
\STATE $\tilde{x}_{i} \gets - x_{i}$; 
\ELSE
\STATE $\tilde{x}_i \gets x_i$;
\ENDIF 
\STATE   Compute
$
\tilde{v}_{\tilde{x}_i} := \left(\prod_{i = d}^1 W_{i, +, \tilde{x}_i}\right)^T A^T( A \left(\prod_{i = d}^1 W_{i, +, \tilde{x}_i}\right) \tilde{x}_i - y);
$
\STATE $x_{i+1} = \tilde{x}_i - \nu {\tilde v}_{\tilde{x}_i}$;
\ENDFOR
\end{algorithmic}
\end{algorithm}

\section{Experiments}

In this section, we tested the performance of Algorithm~\ref{GA:a3} on synthetic data with various sizes of noise, and verified Theorem~\ref{GA:a3} by numerical results.\footnote{Note that we do not observe that any entry in $W_i W_{i - 1, +, x} \cdots W_{2, +, x} W_{1, +, x} x, \forall i$, is zero in our experiments. Therefore, Algorithm~\ref{GA:a3} is equivalent to Algorithm~\ref{GA:a1} in this case.}

The entries of $A$ are drawn from $\mathcal{N}(0, 1/m)$ and the entries in $W_i$ are drawn from $\mathcal{N}(0, 1 / n_i)$. We consider a two layer network with multiple numbers of input neurons $k$ shown in Figure~\ref{GA:f3}. The numbers of neurons in the middle layer and output layer are fixed to be $250$ and $600$, respectively. The number of rows in the measurement matrix $A$ is $m = 150$. The  latent code $x_*$ and the noise $\tilde{e}$ are drawn from the standard normal distribution. The noisy measurement $y$ is set to be $y = A G(x_*) + \tau \tilde{e} / \|\tilde{e}\|$, and four values of $\tau$ are used such that the signal to noise ratio (SNR) values are 40, 80, 120 and inf, where SNR is defined to be $10 \log_{10} \left(\frac{\|A G(x_*)\|}{\|e\|}\right)$. The step size is chosen to be $2^d / d^2$, which is 1 since $d = 2$.  Algorithm~\ref{GA:a1} stops when either the norm of $\tilde{v}$ is smaller than the machine epsilon or the number of iterations reaches $50000$.

Figure~\ref{GA:f3} reports the empirical probability of successful recovery for noiseless problems. A run is called success if the relative error $\|x - x_*\| / \|x_*\|$ is smaller than $10^{-3}$. We observe that Algorithm~\ref{GA:a1} is able to find the true code $x_*$ when $m$ is sufficiently large relative to $k$.  This experiment shows that signal recovery by empirical risk optimization for compressive sensing under expansive Gaussian generative priors succeeds in a much larger parameter range than that given by the theorem.  In particular, the empirical dependence on $d$ appears to be much milder in practice than what was assumed in the theorem.

\begin{figure}
\begin{center}
\includegraphics[width=1\textwidth]{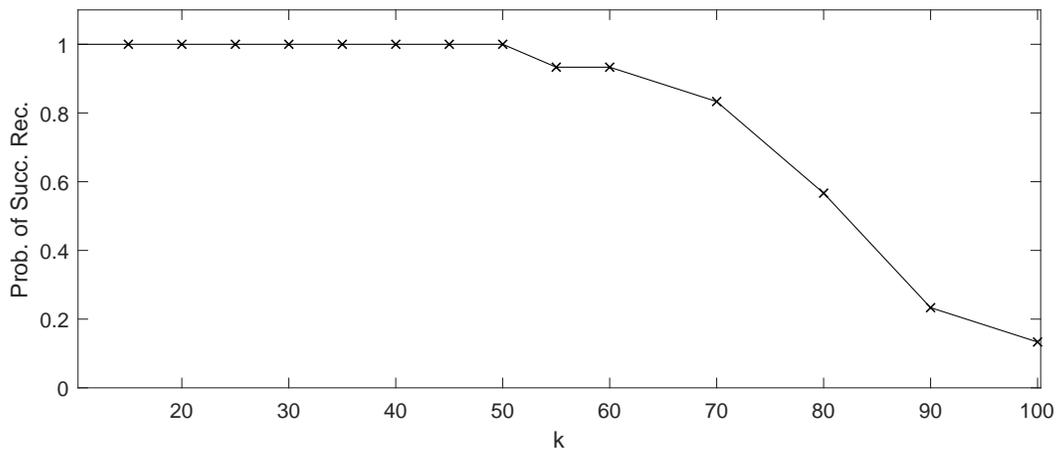}
\caption{Empirical probability of successful recovery (from 30 random runs) versus the number of input neurons $k$ for noiseless problems.  In these experiments, the network has 2 layers, the middle layer has 250 neurons, and the output layer has 600 neurons, after which $m=150$ random measurements are taken. }
\label{GA:f3}
\end{center}
\end{figure}


Figure~\ref{GA:f1} shows graphs of the relative errors versus the number of input neurons at different noise levels. The figure is consistent with the theoretical result in the sense that, fixing $k$, the relative error of the solution found by Algorithm~\ref{GA:a1} is proportional to the norm of the noise, formally stated in~\eqref{GA:e68}. Note that for noisy measurements, the relative error decreases approximately linearly as the number $k$ decreases.  This result is better than what is predicted by the theorem because the theorem is proved in the case of arbitrary noise.  In this case, the noise is random, and one expects superior performance for smaller values of $k$ because only a fraction $k/n$ of the noise energy projects onto the $k$-dimensional signal manifold in $\R^n$. We refer to~\cite{HHHV2018} for more details in the case of random noise.

\begin{figure}
\begin{center}
\includegraphics[width=1\textwidth]{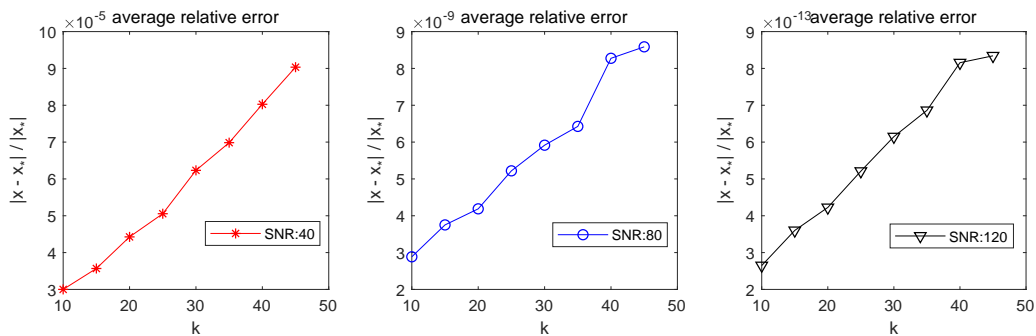}
\caption{The relative error $\|x - x_*\| / \|x_*\|$ versus the number of input neurons $k$. The average of successful runs is reported. 
}
\label{GA:f1}
\end{center}
\end{figure}

Figure~\ref{GA:f2} shows the relationships between the relative error $\|x_i - x_*\| / \|x_*\|$ and the number of iterations for the four values of SNR. The number of input neurons $k$ is $10$. Note that in the tests for the different values of SNR, all the other settings are identical, i.e., the initial iterate, latent code $x_*$, weights matrices $A$ and $W_i$ are the same. We observe that after approximately 20 iterations, Algorithm~\ref{GA:a1} converges linearly to a neighborhood of the true solution $x_*$, and the size of the neighborhood only depends on the magnitude of the noise. These results are consistent with Theorem~\ref{GA:a1}.  Additionally, this figure demonstrates that the relative error in the recovered latent code scales linearly with the magnitude of the noise, which is also consistent with the theorem.

\begin{figure}
\begin{center}
\includegraphics[width=1\textwidth]{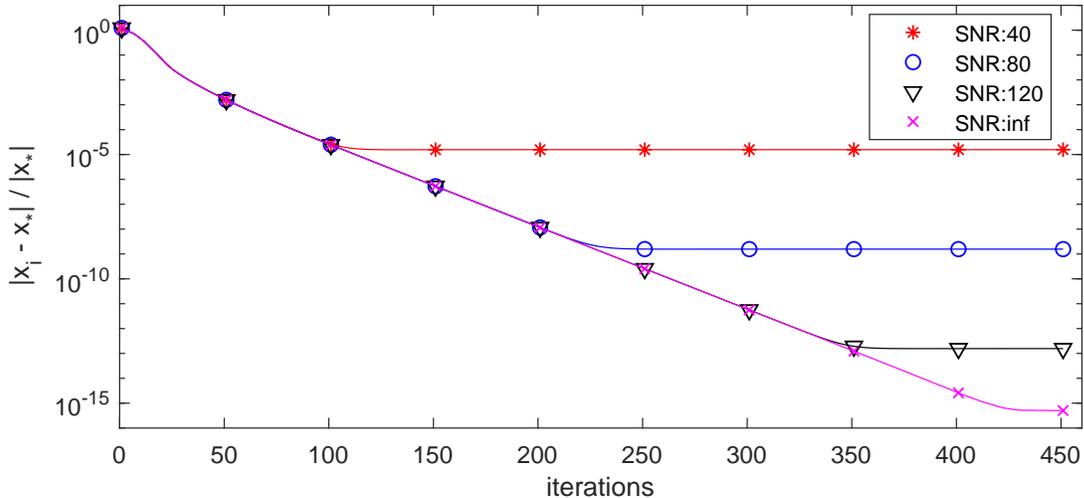}
\caption{The error $\|x - x_*\| / \|x_*\|$ versus the number of iterations. A typical result is reported.}
\label{GA:f2}
\end{center}
\end{figure}

\section{Proof of Theorem~\ref{GA:th1} } \label{GA:s2}

In this section, we prove our main result.
Recall that the goal of Algorithm~\ref{GA:a1} is to minimize the cost function
\begin{equation} \label{GA:e66}
f(x) = \frac{1}{2} \|AG(x) - y\|^2,
\end{equation}
where $y = AG(x_*) + e$ and $e$ is noise.

The proof relies on a concentration of measure argument which ensures that the cost function $f(x)$ and the step direction ${v}_{x}$ concentrate around ${f^E}(x)$ and $h_{x}$, respectively.
In particular, if the WDC and the RRIC hold with $\epsilon = 0$ and the noise $e$ is 0, then $f(x) = {f^E}(x)$ and ${v}_{x} = h_{x}$. The idea of our convergence analysis is to prove properties of ${f^E}(x)$ and the direction $h_{x}$ that are sufficient for a convergence analysis, if our method where to be run on ${f^E}(x)$ with step directions given by $h_x$, and then show that the actual cost function $f(x)$ and step direction are `close enough' to establish convergence.

It is well known that if the gradient of a function is Lipschitz continuous, then a steepest descent method with a sufficient small step size converges to a stationary point from any starting point~\cite{NocWri2006}. However, this result can not be used here since the gradient of the function~\eqref{GA:e2} is not continuous. We overcome this technical difficulty by the following three steps, rigorously stated in Lemma~\ref{GA:le4}, Lemma~\ref{GA:le29}, and Lemma~\ref{GA:le28}, respectively.
\begin{enumerate}
\item\label{it:step1} The function $h_{x}$ is Lipschitz continuous except in a ball around 0.
\item The (sub)-gradient of $f(x)$ is close to $h_x$.
\item\label{it:step3} The iterates generated by Algorithm~\ref{GA:a1} stay sufficiently far away from 0.
\end{enumerate}
Those three steps are sufficient to show that the gradient of $f(x)$ is close to being Lipschitz continuous, and therefore the iterates from Algorithm~\ref{GA:a1} converge to a neighborhood of a \emph{stationary point}.
The size of the neighborhood depends on how close the (sub)-gradient of $f(x)$ is to $h_x$, and is controlled by the noise energy $\|e\|$ and the variable $\epsilon$ in the WDC and the RRIC.
Of course, we also have to ensure that the algorithm not only converges to any of the three stationary points, but that it actually converges to a  point close to $x_*$, for this we rely on the `tweak'  of the algorithm in steps~\ref{it:step1}-\ref{it:step3}.

The remainder of the proof is organized as follows.
We start by defining notation used throughout the proof (see Section~\ref{GA:s5}).
In Section~\ref{sec:prelims} we introduced several technical results formalizing the steps 1-3 above, and in Section~\ref{GA:s4} we use those properties to formally prove  Theorem~\ref{GA:th1}.

\subsection{Notation} \label{GA:s5}

Here, we define some useful quantities, in particular ${f^E}(x)$ and $h_x$, and introduce standard notation used throughout.
We start with defining a function that is helpful for controlling how the operator $x \to W_{+,x}x$ distorts angles, and is defined as
\[
g(\theta) = \cos^{-1}\left(\frac{1}{\pi}\left[ (\pi - \theta) \cos \theta + \sin \theta \right]\right).
\]
With this notation, define
\begin{align*}
h_{x, y} =& \frac{1}{2^d} x - \tilde{h}_{x, y},
\end{align*}
where
\[
\tilde{h}_{x, y} = \frac{1}{2^d} \left( \prod_{i = 0}^{d - 1} \frac{\pi - \bar{\theta}_{i, x, y}}{\pi} \right) y + \frac{1}{2^d} \sum_{i = 0}^{d - 1} \frac{\sin \bar{\theta}_{i, x, y}}{\pi} \left( \prod_{j = i + 1}^{d-1} \frac{\pi - \bar{\theta}_{j, x, y}}{\pi} \right) \|y\| \hat{x}.
\]
Here, $\bar{\theta}_{0, x, y} =  \angle(x, y)$ and  $\bar{\theta}_{i, x, y} = g(\bar{\theta}_{i - 1, x, y})$.
Moreover, given a vector $z \in \mathbb{R}^t$, $\hat{z} = {z} / \|{z}\|$.
For simplicity of notation, we use $h_{z}$ and $\tilde{h}_z$ to denote $h_{z, x_*}$ and $\tilde{h}_{z, x_*}$, respectively, i.e., we omit $x_*$.
Next, define
\begin{align*}
{f^E}(x) =& \frac{1}{2^{d+1}} x^T x - x^T \tilde{h}_{x, x_*} + \frac{1}{2^{d+1}} x_*^T x_*.
\end{align*}
Moreover, let
\[
\rho_d =  \sum_{i = 0}^{d - 1} \frac{\sin \check{\theta}_i}{\pi} \left(\prod_{j = i + 1}^{d - 1} \frac{\pi - \check{\theta}_j}{\pi}\right),
\]
where $\check\theta_0 = \pi$, and $\check{\theta}_i = g(\check{\theta}_{i-1})$. 

Next, define $\mathcal{B}(x, a) \defeq \{y \in \mathbb{R}^k \mid \|y - x\| \leq a\}$,
let $u = v + c O_1(t)$ denote $\|u - v\| \leq c |t|$,
and $f(t) = O(g(t))$ denotes $\lim_{t \rightarrow \infty} |f(t)| / |g(t)| < C$, where $C>0$ is a constant.
Moreover, $\|\cdot\|$ denotes the spectral norm.

Finally, define
\begin{align*}
S_{\epsilon} \defeq&  \{x \in \mathbb{R}^k \mid \|h_{x, x_*}\| \leq \frac{1}{2^d} \epsilon \max(\|x\|, \|x_*\|)\}, \\
S_{\epsilon}^+ \defeq&  S_{\epsilon} \cap \mathcal{B}(x_*, 5000 d^6 \epsilon \|x_*\|  ),  \hbox{ and } \\
S_{\epsilon}^- \defeq&  S_{\epsilon} \cap \mathcal{B}( - \rho_d x_*, 500 d^{11} \sqrt{\epsilon} \|x_*\|). \\
\end{align*}

%
%
%

\subsection{Preliminaries \label{sec:prelims}}

In this section, we state formal results making steps~\ref{it:step1}-\ref{it:step3} from the beginning of this section rigorous, and collect properties used later in the  proof of  Theorem~\ref{GA:th1}.

We  start by showing that the function $h_x$ is Lipschitz continuous except in a ball around $0$:

\begin{lemma} \label{GA:le4}
For all $x, y \neq 0$, it holds that
\begin{equation*}
\|h_{x} - h_{y}\| \leq \left(\frac{1}{2^d} + \frac{6 d + 4 d^2}{\pi 2^d} \max\left( \frac{1}{\|x\|}, \frac{1}{\|y\|} \right) \|x_*\|\right) \|x - y\|.
\end{equation*}
In addition, if $x, y \notin \mathcal{B}(0, r \|x_*\|)$ for any $r > 0$, then $\|h_{x} - h_{y}\| \leq \left(\frac{1}{2^d} + \frac{6 d + 4 d^2}{\pi r 2^d}\right) \|x - y\|$.
\end{lemma}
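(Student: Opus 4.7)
The plan is to decompose $h_z = \frac{1}{2^d}z - \tilde h_{z,x_*}$ and use the triangle inequality, so that $\|h_x - h_y\| \leq \frac{1}{2^d}\|x-y\| + \|\tilde h_{x,x_*} - \tilde h_{y,x_*}\|$. The $\frac{1}{2^d}\|x-y\|$ piece matches the leading constant in the lemma, so the entire task reduces to bounding $\|\tilde h_{x,x_*}-\tilde h_{y,x_*}\|$ by $\frac{6d+4d^2}{\pi 2^d}\max(1/\|x\|,1/\|y\|)\|x_*\|\|x-y\|$. The second statement of the lemma is then immediate, since $\|x\|,\|y\|\geq r\|x_*\|$ implies $\max(1/\|x\|,1/\|y\|)\|x_*\| \leq 1/r$.

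The vector $\tilde h_{z,x_*}$ depends on $z$ only through the iterated angles $\bar\theta_{i,z,x_*}$, built by $\bar\theta_{0,z,x_*}=\angle(z,x_*)$ and $\bar\theta_{i,z,x_*}=g(\bar\theta_{i-1,z,x_*})$, and through the unit vector $\hat z$. I will first collect three elementary Lipschitz estimates: (i) $\|\hat x-\hat y\|\leq 2\|x-y\|/\min(\|x\|,\|y\|) = 2\max(1/\|x\|,1/\|y\|)\|x-y\|$, via the identity $\hat x-\hat y=(x-y)/\|y\| + x(\|y\|-\|x\|)/(\|x\|\|y\|)$; (ii) $|\angle(x,x_*)-\angle(y,x_*)|\leq \angle(x,y)\leq \tfrac{\pi}{2}\|\hat x-\hat y\|$, using $\|\hat x-\hat y\|=2\sin(\angle(x,y)/2)$ together with $\sin\phi\geq 2\phi/\pi$ on $[0,\pi/2]$; and (iii) the map $g$ is $1$-Lipschitz on $[0,\pi]$, which can be checked from the explicit derivative $g'(\theta)=(\pi-\theta)\sin\theta/(\pi\sin g(\theta))$ (with $g'(0)=1$ obtained by Taylor expansion), or invoked from calculations in~\cite{HV17}. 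Iterating (iii) on top of (ii) yields $|\bar\theta_{i,x,x_*}-\bar\theta_{i,y,x_*}|\leq \tfrac{\pi}{2}\|\hat x-\hat y\|$ for every $i$, so each factor $(\pi-\bar\theta_{i,\cdot,x_*})/\pi$ and each $\sin\bar\theta_{i,\cdot,x_*}/\pi$ differs between $x$ and $y$ by at most $\tfrac{1}{2}\|\hat x-\hat y\|$.

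With these in hand, I estimate $\|\tilde h_{x,x_*}-\tilde h_{y,x_*}\|$ summand by summand. For the first summand (the coefficient of $x_*$), the standard product-perturbation inequality $|\prod_i a_i - \prod_i b_i|\leq \sum_i|a_i-b_i|$ for $a_i,b_i\in[0,1]$ delivers a contribution of order $\frac{d}{2^{d+1}}\|x_*\|\|\hat x-\hat y\|$. For each $i$-th term of the second summand I apply the add-and-subtract identity
\[
C_i^{(x)}\hat x - C_i^{(y)}\hat y \;=\; C_i^{(x)}(\hat x-\hat y) + (C_i^{(x)}-C_i^{(y)})\hat y, \qquad C_i^{(z)}=\tfrac{\sin\bar\theta_{i,z,x_*}}{\pi}\prod_{j>i}\tfrac{\pi-\bar\theta_{j,z,x_*}}{\pi},
\]
and bound $|C_i^{(z)}|\leq 1/\pi$ and $|C_i^{(x)}-C_i^{(y)}|\leq O(d)\|\hat x-\hat y\|$ by the same product-perturbation estimate. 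Summing over the $d$ values of $i$ produces a contribution of order $\frac{d+d^2}{2^d}\|x_*\|\|\hat x-\hat y\|$. Combining with (i) to convert $\|\hat x-\hat y\|$ into $2\max(1/\|x\|,1/\|y\|)\|x-y\|$ gives the claimed $(6d+4d^2)/(\pi 2^d)$ prefactor after tallying constants. The only step I expect to require genuine care is the $1$-Lipschitz verification for $g$ at the fixed point $\theta=0$, where both numerator and denominator of $g'$ vanish; everything else is bookkeeping on quantities confined to $[0,1]$ together with the triangle inequality.
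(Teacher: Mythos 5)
Your proposal is correct and follows essentially the same route as the paper's proof: decompose $h_z = \frac{1}{2^d}z - \tilde h_{z,x_*}$, reduce angle increments to $|\bar\theta_{0,x,x_*}-\bar\theta_{0,y,x_*}|$ via the monotone $1$-Lipschitz property of $g$, control the product differences by the telescoping inequality $|\prod a_i - \prod b_i|\le\sum_i|a_i-b_i|$ (the paper's Lemma~\ref{GA:le17}), and handle the $\hat x$ versus $\hat y$ discrepancy by the same add-and-subtract split, converting everything to $\|x-y\|$ through $\|\hat x-\hat y\|\lesssim \max(1/\|x\|,1/\|y\|)\|x-y\|$. The only cosmetic difference is that the paper passes from angles to norms via $\|x-y\|\ge 2\sin(\theta_{x,y}/2)\min(\|x\|,\|y\|)$ and $\sin(\theta/2)\ge\theta/4$ rather than through $\|\hat x-\hat y\|$, which changes nothing of substance.
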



The next lemma states that $h_x$ and the sub-gradient $v_x$ are close:

\begin{lemma} \label{GA:le29}
Suppose the WDC and RRIC
hold with $\epsilon \leq 1 / (16 \pi d^2)^2$. Then for any $x\neq 0$ and any  $v_x \in {\partial f(x)}$,
\[
\|v_x - h_x\| \leq a_1 \frac{d^3 \sqrt{\eps}}{2^d} \max ( \|x\|, \| \xo\|) + \frac{2}{2^{d/2}} \|e\|,
\]
where $a_1$ is a universal constant.
\end{lemma}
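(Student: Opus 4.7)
The plan is to compute $v_x - h_x$ at a differentiable point $x$, decompose it into four error terms, bound each via either the WDC or the RRIC, and then extend to non-differentiable points by a convex-hull argument. Writing $\Pi_x := \prod_{i=d}^{1} W_{i,+,x}$ so that $G(x) = \Pi_x x$, the (sub-)gradient at a differentiable $x$ equals $v_x = \Pi_x^T A^T A \Pi_x x - \Pi_x^T A^T A \Pi_{x_*} x_* - \Pi_x^T A^T e$, and a direct rearrangement will give
\[
v_x - h_x \;=\; \Pi_x^T (A^T A - I)(G(x) - G(x_*)) \;+\; \bigl(\Pi_x^T \Pi_x x - \tfrac{1}{2^d} x\bigr) \;-\; \bigl(\Pi_x^T \Pi_{x_*} x_* - \tilde h_{x,x_*}\bigr) \;-\; \Pi_x^T A^T e,
\]
which I will denote $E_1 + E_2 - E_3 - E_4$ and bound using RRIC, WDC, WDC, and RRIC respectively.

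For $E_1$ and $E_4$ the key trick is piecewise linearity of $G$: for every vector $v$, $\Pi_x v = \lim_{\delta \downarrow 0}\delta^{-1}(G(x+\delta v) - G(x))$. Applying the RRIC with $(x_1, x_2, x_3, x_4) = (x+\delta v, x, x, x_*)$ and sending $\delta \downarrow 0$ will then yield, for every unit $v$, the bound $|\langle v, E_1\rangle| \leq \epsilon \|\Pi_x v\|\,\|G(x) - G(x_*)\| \leq \epsilon \|\Pi_x\|\,\|G(x) - G(x_*)\|$. Combined with the standard WDC consequences $\|\Pi_x\| \leq (1+\epsilon)^{d}/2^{d/2}$ and $\|G(x)-G(x_*)\| \leq \|\Pi_x\|\|x\| + \|\Pi_{x_*}\|\|x_*\| \leq C\,2^{-d/2}\max(\|x\|, \|x_*\|)$ from \cite{HV17}, this gives $\|E_1\| \lesssim \epsilon\,2^{-d}\max(\|x\|, \|x_*\|)$. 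The same linearization will handle $E_4$ without invoking $\|A\|$ directly (which is not small): RRIC also yields $\|A\Pi_x v\| \leq \sqrt{1+\epsilon}\,\|\Pi_x v\|$, so $\|E_4\| = \sup_{\|v\|=1}\langle A\Pi_x v, e\rangle \leq \sqrt{1+\epsilon}\,\|\Pi_x\|\,\|e\| \leq (2/2^{d/2})\|e\|$ for $\epsilon$ in the allowed range.

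The WDC terms $E_2$ and $E_3$ will be bounded by invoking the product-of-layers concentration lemma from \cite{HV17}, which under the WDC at each layer gives an estimate of the form $\|\Pi_x^T \Pi_y y - \tilde h_{x,y}\| \leq C d^3 \sqrt{\epsilon}\,\|y\|/2^d$, with the specialization $\tilde h_{x,x} = x/2^d$ covering $E_2$ and the choice $y = x_*$ covering $E_3$. Summing the four estimates and using that the hypothesis $\epsilon \leq (16\pi d^2)^{-2}$ forces $\epsilon \leq \sqrt{\epsilon}$ so that the $E_1$ contribution is absorbed into the WDC term will yield the claimed inequality for some universal $a_1$.

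For non-differentiable $x$, \eqref{GA:e56} writes $\partial f(x) = \mathrm{conv}(v_1, \ldots, v_t)$ as the convex hull of gradients of the adjacent quadratic pieces; each $v_j$ arises by freezing one ReLU activation pattern consistent with $x$ and so has the same form as above with some matrix $\Pi_x^{(j)}$ to which WDC and RRIC still apply. The bound therefore holds for every $v_j$, and convexity of $\|\,\cdot\, - h_x\|$ will extend it to every $v \in \partial f(x)$. The main technical hurdle is the $d^3 \sqrt{\epsilon}$ rate in the product-of-layers lemma: it requires iterating the angle-distortion map $g$ through the $d$ layers while carefully propagating perturbations of the $Q_{x,y}$ matrices and summing a telescoping recursion, but that hard work is already carried out in \cite{HV17} and can simply be cited here.
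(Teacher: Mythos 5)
Your proposal is correct and follows essentially the same route as the paper: the paper likewise splits off the noise term $q_x = \Pi_x^T A^T e$ and bounds it by $\tfrac{2}{2^{d/2}}\|e\|$ via exactly your linearization-plus-RRIC argument (its Lemma~\ref{GA:le31}), defers the deterministic bound $\|\bar v_x - h_x\| \leq b_0 d^3\sqrt{\eps}\,2^{-d}\max(\|x\|,\|x_*\|)$ to \cite[(29)]{HV17} (which internally is your $E_1,E_2,E_3$ decomposition), and extends to nondifferentiable points by writing $\partial f(x)$ as a convex hull of limits of nearby gradients and using continuity of $h_x$. The only cosmetic difference is that you re-derive the noiseless bound from its four pieces rather than citing the combined estimate, and your convex-hull step should be phrased, as the paper does, via $v_j = \lim_{\delta\downarrow 0}\nabla f(x+\delta u_j)$ together with $h_{x+\delta u_j}\to h_x$ so that the comparison vector $h_x$ is the correct one for each frozen activation pattern.
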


Next, we ensure that after sufficiently many steps, the algorithm will be relatively far from the maximum around $0$:

\begin{lemma} \label{GA:le28}
Suppose that WDC holds with $\epsilon < 1/ (16 \pi d^2)^2$ and $\|e\| \leq \frac{\|x_*\|}{8\pi 2^{d/2}}$.
Moreover, suppose that the step size in Algorithm~\ref{GA:a1} satisfies $0< \nu  < \frac{a_2 2^d}{d^2}$, where $a_2$ is a universal constant.
Then, after at most $T = (\frac{19 \pi 2^d}{16 \nu})^2$ steps, we have that for all $i>T$
and for all $t \in [0, 1]$ that
$t \tilde{x}_i + (1 - t) x_{i + 1} \notin \mathcal B(0, \frac{1}{32\pi} \|x_*\|)$.
\end{lemma}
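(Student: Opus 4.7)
The plan is to show that whenever $\tilde{x}_i$ lies in a small ball around $0$, a single iteration of Algorithm~\ref{GA:a1} strictly increases $\|x\|^2$ by a universal positive amount. Because $\|x\|^2$ is bounded above by $(\|x_*\|/(32\pi))^2$ while $\tilde{x}_i$ stays in $\mathcal{B}(0,\|x_*\|/(32\pi))$, this forces the iterate to exit the ball within the stated number $T$ of steps. The argument has three parts: a lower bound on $\|v_{\tilde{x}_i}\|$ in the ball, a sign-control bound on $\langle \tilde{x}_i,v_{\tilde{x}_i}\rangle$ coming from the check step, and a stability argument ensuring the iterate (including the entire segment between $\tilde{x}_i$ and $x_{i+1}$) remains outside the ball for all $i>T$.

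For the first two parts I would expand
\begin{equation*}
\|x_{i+1}\|^2 \;=\; \|\tilde{x}_i\|^2 \;-\; 2\nu\, \langle \tilde{x}_i, v_{\tilde{x}_i}\rangle \;+\; \nu^2\|v_{\tilde{x}_i}\|^2.
\end{equation*}
The formula $h_x = \frac{1}{2^d}x - \tilde{h}_{x,x_*}$ together with the definition of $\tilde{h}_{x,x_*}$ yields a uniform lower bound $\|h_x\| \geq c_1 \|x_*\|/2^d$ on $\mathcal{B}(0,r_0\|x_*\|)\setminus\{0\}$ for some $r_0$ slightly larger than $1/(32\pi)$: the only point in such a neighborhood of $0$ where $h$ vanishes is $-\rho_d x_*$, which sits strictly outside the ball since $\rho_d \geq 1/\pi > 1/(32\pi)$ for $d \geq 2$. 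Lemma~\ref{GA:le29}, combined with the hypothesized bounds $\epsilon \leq 1/(16\pi d^2)^2$ and $\|e\| \leq \|x_*\|/(8\pi 2^{d/2})$, transfers this to $\|v_{\tilde{x}_i}\| \geq c_2 \|x_*\|/2^d$. The check step in lines~\ref{GA:a1:st3}--\ref{GA:a1:st8} selects the $\tilde{x}_i \in \{x_i,-x_i\}$ with the smaller value of $f$, which, via $f \approx f^E$ and the explicit form of $\tilde{h}_{x,x_*}$, forces $\langle \tilde{x}_i, v_{\tilde{x}_i}\rangle$ to be negligible compared with $\nu\|v_{\tilde{x}_i}\|^2$ whenever $\tilde{x}_i$ is small. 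Substituting yields
\begin{equation*}
\|x_{i+1}\|^2 - \|\tilde{x}_i\|^2 \;\geq\; c_3\, \nu^2 \|x_*\|^2 / 4^d,
\end{equation*}
and accumulating over $T = (19\pi 2^d/(16\nu))^2$ iterations forces $\|x_j\| \geq \|x_*\|/(32\pi)$ for some $j \leq T$.

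For the stability part, once $\|x_j\|$ has exceeded $\|x_*\|/(32\pi)$ with a small margin, the step length $\nu\|v_{\tilde{x}_j}\| = O(\|x_*\|/d^2)$ is smaller than that margin, so both endpoints $\tilde{x}_j$, $x_{j+1}$, and the segment joining them, remain outside $\mathcal{B}(0,\|x_*\|/(32\pi))$. A monotonicity argument using Lemma~\ref{GA:le4} (Lipschitz continuity of $h$ off the ball) and Lemma~\ref{GA:le29} (closeness of $v$ and $h$) then prevents re-entry, giving the conclusion for every $i > T$.

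The main technical obstacle is obtaining the sign bound on $\langle \tilde{x}_i, v_{\tilde{x}_i}\rangle$ uniformly throughout the ball: it requires a case analysis based on the angle $\angle(\tilde{x}_i, x_*)$ and on the relative sizes of the two coefficients in $\tilde{h}_{x,x_*}$. The check step is precisely what rules out the directions in which the bound would otherwise fail, namely those near the spurious critical point $-\rho_d x_*$. Keeping careful track of the universal constants through these estimates is what produces the precise factor $19\pi/16$ appearing in $T$.
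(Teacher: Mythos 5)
Your overall strategy --- expand $\|x_{i+1}\|^2 = \|\tilde{x}_i\|^2 - 2\nu\langle\tilde{x}_i,v_{\tilde{x}_i}\rangle + \nu^2\|v_{\tilde{x}_i}\|^2$, lower-bound $\|v_{\tilde{x}_i}\|$ in the ball, control the cross term, and conclude that the squared norm must grow until the iterate escapes --- is the same as the paper's. The gap is in your treatment of the cross term, which is the crux of the lemma. You propose to show that $\langle\tilde{x}_i,v_{\tilde{x}_i}\rangle$ is ``negligible'' and to derive this from the check step in lines~\ref{GA:a1:st3}--\ref{GA:a1:st8} via a case analysis on $\angle(\tilde{x}_i,x_*)$. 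This mechanism is both unsubstantiated and wrong in direction: the check step only compares $f(x_i)$ with $f(-x_i)$ and gives no quantitative information about $\langle x,v_x\rangle$ at the chosen point; moreover the ``bad directions'' you say it rules out are those near $-\rho_d x_*$, which lies outside $\mathcal{B}(0,\frac{1}{32\pi}\|x_*\|)$ and is irrelevant to the behavior inside the ball. What actually closes the argument is a \emph{uniform, check-free} sign bound: by \cite[Lemma~6]{HV17} one has $\langle\Lambda_x x,\Lambda_{x_*}x_*\rangle \geq \frac{1}{4\pi}\frac{1}{2^d}\|x\|\|x_*\|$ for all nonzero $x$ (where $\Lambda_z = \prod_{i=d}^1 W_{i,+,z}$), and combining this with the RRIC, $\|\Lambda_x x\|^2 \leq \frac{13}{12}2^{-d}\|x\|^2$, and Lemma~\ref{GA:le31} gives
\begin{equation*}
\langle x,\tilde v_x\rangle \;\leq\; \frac{\|x\|}{2^d}\Bigl(2\|x\| - \tfrac{1}{8\pi}\|x_*\|\Bigr) \;<\; 0
\quad\text{for all } x\in\mathcal{B}\bigl(0,\tfrac{1}{16\pi}\|x_*\|\bigr),
\end{equation*}
so the cross term has the \emph{favorable} sign throughout the ball and the obtuse-triangle inequality $\|x_{i+1}\|^2 \geq \|\tilde{x}_i\|^2 + \nu^2\|v_{\tilde{x}_i}\|^2$ follows with no case analysis. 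Without this (or an equivalent) input, your per-step increase $\|x_{i+1}\|^2-\|\tilde{x}_i\|^2 \geq c_3\nu^2\|x_*\|^2/4^d$ is not established, since a cross term of the wrong sign and of order $\nu\|x\|\,\|v_x\|$ could swamp $\nu^2\|v_x\|^2$.

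Two smaller points. First, your lower bound $\|h_x\|\geq c_1\|x_*\|/2^d$ on the punctured ball is asserted from ``$h$ only vanishes at $-\rho_d x_*$,'' which is qualitative, not quantitative, and you assert $\rho_d\geq 1/\pi$ without justification (the paper only records $a_8=\min_{d\geq 2}\rho_d>0$ and $1-\rho_d\leq 250/(d+1)$); you could instead get this from Lemma~\ref{lemma:Sepsst}, or do as the paper does and lower-bound $\|v_x\|$ directly by $\|\Lambda_x^T\Lambda_{x_*}x_*\| - \|\Lambda_x^T\Lambda_x x\| - (\text{error terms}) \geq \frac{1}{2^d}(\frac{1}{8\pi}-\frac{1}{16\pi})\|x_*\|$, again via \cite[Lemma~6]{HV17}. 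Second, your ``monotonicity argument to prevent re-entry'' is more than is needed and more than the lemma claims: the statement only requires the segment $t\tilde{x}_i+(1-t)x_{i+1}$ to avoid the \emph{smaller} ball of radius $\frac{1}{32\pi}\|x_*\|$, and this follows immediately from the short-step bound $\nu\|v_{\tilde{x}_i}\|\leq\frac{1}{2}\|\tilde{x}_i\|$ (Lemma~\ref{GA:le32}) whenever $\tilde{x}_i\notin\mathcal{B}(0,\frac{1}{16\pi}\|x_*\|)$; no Lipschitz or re-entry analysis is required.
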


Recall that $S_\beta$ is the set of points with the norm of $h_x$ upper bounded by $\beta \max(\|x\|, \|x_*\|)$.
The following lemma shows that this set is contained in balls around $x_*$ and $\rho_d x_*$, thus outside those balls, the norm of $h_x$ is lower bounded, which, together with Lemma~\ref{GA:le29}, establishes that the sub-gradients are bounded away from zero. This in turn is important to show that outside those balls our gradient scheme makes progress.
\begin{lemma}{\cite[Lemma~8]{HHHV2018}} \label{lemma:Sepsst}
For any $\beta \leq \frac{1}{64^2 d^{12}}$,
\begin{align*}
S_\beta
\subset
\mathcal{B}(x_*, 5000 d^6 \beta \|x_*\|  ) \cup  \mathcal{B}( - \rho_d x_*, 500 d^{11} \sqrt{\beta} \|x_*\|).
\end{align*}
Here, $\rho_d>0$ obeys $\rho_d \to 1$ as $d \to \infty$.
\end{lemma}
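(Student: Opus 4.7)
The plan is to exploit the essentially two-dimensional structure of $h_{x,x_*}$ in the plane $\mathrm{span}(x,x_*)$. Assume without loss of generality $\|x_*\|=1$, let $r=\|x\|$, $\theta=\angle(x,x_*)\in[0,\pi]$, and write $\hat x=\cos\theta\,\hat{x}_*+\sin\theta\,\hat w$ with $\hat w\perp\hat{x}_*$ in $\mathrm{span}(x,x_*)$. Abbreviating $A_d(\theta)=\prod_{i=0}^{d-1}(\pi-\bar\theta_i)/\pi$ and $B_d(\theta)=\sum_{i=0}^{d-1}(\sin\bar\theta_i/\pi)\prod_{j=i+1}^{d-1}(\pi-\bar\theta_j)/\pi$, a direct expansion of the definition of $h_{x,x_*}$ yields
\begin{align*}
2^d\,h_{x,x_*} \;=\; \bigl[(r-B_d(\theta))\cos\theta-A_d(\theta)\bigr]\hat{x}_* \;+\; (r-B_d(\theta))\sin\theta\,\hat w.
\end{align*}
Thus the hypothesis $\|h_{x,x_*}\|\le \beta\max(r,1)/2^d$ splits into two scalar inequalities on these orthogonal components, and the whole argument reduces to analysing this coupled pair.

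The perpendicular inequality forces either (i) $\sin\theta$ is small, so $\theta$ is near $0$ or $\pi$, or (ii) $|r-B_d(\theta)|$ is small. I would handle (i) and (ii) in turn. Near $\theta=0$ one has $A_d(\theta)=1-O(d\theta)$ and $B_d(\theta)=O(d\theta)$, so the parallel inequality reduces to $|r-1|=O(d\theta+\beta)$; combined with $\sin\theta=O(\beta)$ this gives $\|x-x_*\|=O(\mathrm{poly}(d)\beta)$, the first ball. For case (ii), using that $g:(0,\pi]\to(0,\pi)$ with $g(\pi)=\pi/2$, one shows $A_d(\theta)=0$ iff $\theta=\pi$, and furthermore that $A_d(\theta)$ is bounded below away from $\theta=\pi$; hence the parallel inequality forces $\theta$ to lie $O(\mathrm{poly}(d)\beta)$-close to $\pi$, reducing (ii) to the near-$\pi$ regime.

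Near $\theta=\pi$ I would parametrise $r=\rho_d+\delta_1$, $\theta=\pi-\delta_2$. Using the Taylor expansion of $g$ at $\pi$ (where $g(\pi)=\pi/2$ and $g'(\pi)=0$, so the first iterate absorbs the singular behaviour) together with inductive control over the remaining layers, I would establish that $A_d(\pi-\delta_2)=\alpha_d\delta_2+O(\delta_2^2)$ for some $\alpha_d=\Theta_d(1)$ and $B_d(\pi-\delta_2)=\rho_d+O(\mathrm{poly}(d)\delta_2)$. Substituting into the two scalar bounds gives the coupled system
\begin{align*}
|\delta_1|+|\delta_2|\;\lesssim\;\mathrm{poly}(d)\,\beta, \qquad |(\delta_1-O(\delta_2))\,\delta_2|\;\lesssim\;\mathrm{poly}(d)\,\beta,
\end{align*}
from which $\delta_1,\delta_2=O(\mathrm{poly}(d)\sqrt{\beta})$. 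This is exactly the source of the $\sqrt{\beta}$ scaling and the larger $d^{11}$ prefactor in the second ball, while the first ball inherits only the linear $\beta$ scaling and smaller $d^6$ prefactor.

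The main obstacle I anticipate is the quantitative control of $A_d$ and $B_d$ and their derivatives uniformly in $\theta\in(0,\pi)$, while tracking the exact polynomial dependence on $d$ through $d$-fold composition with $g$. A naive induction accumulates factors of $d$ at each layer, so extracting clean exponents like $d^6$ and $d^{11}$ requires careful book-keeping: in particular, exploiting that $g$ contracts strongly toward its interior fixed point (so $\bar\theta_i$ for $i\ge 1$ is uniformly bounded inside $(0,\pi)$ away from both endpoints, regardless of $\theta$), and that the only way a product of angle-factors can be small is for the very first factor to be small. With these structural facts in hand the two cases collapse into the stated covering by two balls.
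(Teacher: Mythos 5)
The paper does not actually prove this lemma; it is imported verbatim as Lemma~8 of the cited work \cite{HHHV2018}, so there is no in-paper argument to compare against. Judged on its own terms, your plan reproduces the standard argument from that line of work: restrict to $\sspan(x,x_*)$, write $2^d h_{x,x_*}=\bigl[(r-B_d(\theta))\cos\theta-A_d(\theta)\bigr]\hat{x}_*+(r-B_d(\theta))\sin\theta\,\hat w$, split the hypothesis into the two orthogonal components, and run a three-way case analysis ($\theta$ near $0$, $\theta$ near $\pi$, $\theta$ in between, with the middle case collapsing into the near-$\pi$ case because $A_d(\theta)\gtrsim (\pi-\theta)\,d^{-6}$ by the lower bound $\prod_{i\ge 1}(1-\check\theta_i/\pi)\ge (3/(d+2))^6$ from Lemma~\ref{lemma:rho-d}). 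This is the right skeleton, it correctly identifies why the ball around $x_*$ has radius linear in $\beta$ while the ball around $-\rho_d x_*$ only gets $\sqrt{\beta}$, and the claim $g'(\pi)=0$ that you lean on is correct.

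One concrete point needs fixing. In the near-$\pi$ regime the displayed ``coupled system'' is internally inconsistent: if the parallel component really gave $|\delta_1|+|\delta_2|\lesssim \mathrm{poly}(d)\,\beta$, you would be done with a radius linear in $\beta$ and the $\sqrt{\beta}$ conclusion would be superfluous. What the parallel component actually yields is only a bound on the \emph{combination} $|\delta_1+c_d\,\delta_2|\lesssim\beta$, where $c_d$ contains the contribution $\alpha_d=\frac{1}{\pi}\prod_{i=1}^{d-1}(1-\check\theta_i/\pi)$ of $A_d$ (which vanishes linearly at $\theta=\pi$ through its first factor) together with the $O(\mathrm{poly}(d))$ derivative of $B_d$; the perpendicular component bounds $|\delta_1+c_d'\,\delta_2|\,\delta_2\lesssim\beta$ with a \emph{different} coefficient $c_d'$ (no $A_d$ term). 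The $\sqrt{\beta}$ rate then comes from $|c_d-c_d'|=\alpha_d\gtrsim d^{-6}$, giving $\alpha_d\delta_2^2\lesssim\beta$ and hence $\delta_2\lesssim d^{3}\sqrt{\beta}$, which is also where the large $d^{11}$ prefactor originates. You should also dispose explicitly of the normalization $\max(\|x\|,\|x_*\|)$ on the right-hand side (a triangle-inequality bound $\|2^dh_{x,x_*}\|\ge r-B_d-A_d\ge r-d/\pi-1$ confines $r$ to $O(d)$, after which $\max(r,1)=O(d)$ is harmless). With these repairs the sketch matches the cited proof in substance.
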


It has been shown in~\cite{HV17} that the function ${f^E}(x)$ has three stationary points: one at $- \rho_d x_*$, one global minimizer at $x_*$ and a local maximizer at $0$. Therefore, Algorithm~\ref{GA:a1} could in principle be attracted to $- \rho_d x_*$. Lemma~\ref{GA:le14} guarantees that with the tweak from Step~\ref{GA:a1:st3} to Step~\ref{GA:a1:st8}, the iterates of Algorithm~\ref{GA:a1} converges to a neighborhood of $x_*$.
\begin{lemma} \label{GA:le14}
Suppose the WDC and RRIC
hold with $\epsilon < 1 / (16 \pi d^2)^2$. Moreover, suppose the noise $e$ satisfies $\|e\| \leq \frac{a_3 \|x_*\|}{d^2 2^{d/2}}$, where $a_3$ is a universal constant. Then for any $\phi_d \in [\rho_d, 1]$, it holds that
\begin{align} \label{GA:e25}
f(x) < f(y)
\end{align}
for all $x \in \mathcal{B}(\phi_d x_*, a_4 d^{-10} \|x_*\|)$ and $y \in \mathcal{B}(- \phi_d x_*, a_4 d^{-10} \|x_*\|)$, where $a_4 < 1$ is a universal constant.
\end{lemma}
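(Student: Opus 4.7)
The plan is to reduce the inequality $f(x) < f(y)$ to a comparison at the centers $\pm\phi_d x_*$ via three controlled perturbations: (i) the neighborhood deviation of $f^E$, (ii) the concentration of $f$ around $f^E$ under the WDC and RRIC, and (iii) the noise-induced contribution. The key input is that $f^E$ exhibits a strictly positive gap between $\phi_d x_*$ and $-\phi_d x_*$ that must survive these three perturbations.

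Step 1 is a direct evaluation of $f^E$ at the two centers. At $x = \phi_d x_*$ the angle $\bar{\theta}_{0,x,x_*}=0$ propagates as $\bar{\theta}_i=0$ for all $i$, so $\tilde{h}_{\phi_d x_*, x_*} = \frac{1}{2^d} x_*$ and $f^E(\phi_d x_*) = \frac{\|x_*\|^2}{2^{d+1}}(\phi_d - 1)^2$. At $x = -\phi_d x_*$ the initial angle is $\pi$, hence $\bar{\theta}_i = \check{\theta}_i$; the leading product in $\tilde{h}$ vanishes and the remaining sum collapses to $\tilde{h}_{-\phi_d x_*, x_*} = -\frac{\rho_d}{2^d} x_*$, whence $f^E(-\phi_d x_*) = \frac{\|x_*\|^2}{2^{d+1}}(\phi_d^2 - 2\phi_d\rho_d + 1)$. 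Subtracting gives the expected gap $\Delta := f^E(-\phi_d x_*) - f^E(\phi_d x_*) = \frac{\phi_d(1-\rho_d)}{2^d}\|x_*\|^2 \geq \frac{\rho_d(1-\rho_d)}{2^d}\|x_*\|^2 > 0$, using $\rho_d \in (0,1)$ for $d\geq 2$.

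Step 2 extends this to the balls and then swaps $f^E$ for $f$. Since $h_{z}$ plays the role of the (sub)gradient of $f^E$, integrating along the segment from $\phi_d x_*$ to $x$ gives $|f^E(x) - f^E(\phi_d x_*)| \leq \sup_{z \in [\phi_d x_*, x]} \|h_{z}\| \cdot \|x - \phi_d x_*\|$. Direct computation yields $\|h_{\phi_d x_*}\| = \frac{(1-\phi_d)}{2^d}\|x_*\|$ and $\|h_{-\phi_d x_*}\| = \frac{(\phi_d - \rho_d)}{2^d}\|x_*\|$, and Lemma~\ref{GA:le4} bounds the variation of $\|h_z\|$ along each segment by $\frac{C d^2}{2^d}\cdot a_4 d^{-10}\|x_*\|$ (since $\phi_d \geq \rho_d > 0$ keeps the segments outside a fixed ball around $0$), giving $|f^E(x) - f^E(\phi_d x_*)|,\, |f^E(y) - f^E(-\phi_d x_*)| \lesssim a_4 d^{-8}\|x_*\|^2/2^d$. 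For the passage to $f$, expand $f(z) = \frac{1}{2}\|A(G(z) - G(x_*))\|^2 - \langle A(G(z) - G(x_*)), e\rangle + \frac{1}{2}\|e\|^2$: the RRIC replaces $\|A(\cdot)\|^2$ by $\|(\cdot)\|^2$ up to a $(1 + O(\epsilon))$ factor, and the WDC identifies $\frac{1}{2}\|G(z) - G(x_*)\|^2$ with $f^E(z)$ up to an error $\lesssim \epsilon\,\poly(d)\,\|x_*\|^2/2^d$; the common $\frac{1}{2}\|e\|^2$ cancels in $f(x) - f(y)$. The cross term $|\langle A(G(z) - G(x_*)), e\rangle| \lesssim \|G(z) - G(x_*)\|\,\|e\| \lesssim \sqrt{f^E(z)}\,\|e\|$, and since $f^E(z) \lesssim (1-\rho_d)\|x_*\|^2/2^d$ on both balls, the hypothesis $\|e\| \leq a_3\|x_*\|/(d^2 2^{d/2})$ bounds the cross term by $\lesssim a_3\sqrt{1-\rho_d}\|x_*\|^2/(d^2 2^d)$. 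Choosing $a_3, a_4$ sufficiently small and invoking $\epsilon \leq 1/(16\pi d^2)^2$ keeps the total perturbation below $\Delta/2$, so $f(x) - f(y) \leq -\Delta/2 < 0$.

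The main obstacle is that $\Delta \asymp (1-\rho_d)\|x_*\|^2/2^d$ can itself shrink in $d$ (since $\rho_d \to 1$), forcing tight calibration of every error term. The $a_4 d^{-10}$ radius is chosen to overcome the $d^2/2^d$ Lipschitz constant in Lemma~\ref{GA:le4}, and the $a_3 d^{-2} 2^{-d/2}$ noise bound is chosen to overcome the $2^{d/2}$ scaling of $G$; the WDC/RRIC bound $\epsilon \leq 1/(16\pi d^2)^2$ supplies just enough slack to absorb the residual concentration error. One minor technical point is that $h_z$ is only Lipschitz away from the origin, but the segments used in Step 2 are uniformly bounded away from $0$ since $\rho_d$ is bounded below by a positive constant for $d\geq 2$.
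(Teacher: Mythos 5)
Your overall strategy matches the paper's: exhibit a gap in $f^E$ between the two balls and show it survives (i) the deviation of $f^E$ over each ball, (ii) the concentration of $f$ about $f^E$, and (iii) the noise cross term. Your center evaluations and the gap $\Delta=\phi_d(1-\rho_d)\|x_*\|^2/2^d$ agree with what the paper obtains; the paper handles (i) by direct trigonometric estimates on the balls (Lemma~\ref{GA:le13}) rather than by your mean-value argument, which additionally relies on the identity $\nabla f^E=h$ --- true, but asserted by you without proof and nowhere stated in the paper.

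The genuine gap is in item (ii). The concentration error $|f(z)-f^E(z)|$ is not $O(\epsilon\,\mathrm{poly}(d)\,\|x_*\|^2/2^d)$: by Lemma~\ref{GA:le15}, which inherits the $48\,d^3\sqrt{\epsilon}$ cross term from \cite[Lemma~6]{HV17}, it contains a contribution of order $d^3\sqrt{\epsilon}\,\|x_*\|^2/2^d$. Under the hypothesis you invoke, $\epsilon<1/(16\pi d^2)^2$, this is of order $d\,\|x_*\|^2/2^d$, which exceeds $\Delta\gtrsim \|x_*\|^2/(d^2 2^d)$ (using $1-\rho_d\gtrsim d^{-2}$ from Lemma~\ref{lemma:rho-d}) by a factor of about $d^3$. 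So your claim that the stated $\epsilon$-bound ``supplies just enough slack to absorb the residual concentration error'' is unsupported, and the conclusion $f(x)-f(y)\le -\Delta/2$ does not follow from your error budget. Closing the argument requires $d^3\sqrt{\epsilon}\ll 1-\rho_d$, i.e.\ $\epsilon\lesssim d^{-10}$, which is precisely what the paper's own proof imposes when it sets $\varphi=\epsilon=a_4/d^{10}$ (and which is available where the lemma is applied, since the main theorem assumes $\epsilon\le K_1/d^{90}$). Your accounting of (i) and (iii) is fine --- indeed your treatment of the noise cross term via $\sqrt{f^E(z)}\,\|e\|$ is sharper than the paper's --- but the $\sqrt{\epsilon}$ versus $\epsilon$ discrepancy in (ii) must be tracked and the $\epsilon$ requirement strengthened for the proof to close.
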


Once an iterate is in a small neighborhood of $x_*$, Lemma~\ref{GA:le10} guarantees that the search directions of the iterates afterward point to $x_*$ up to the noise $e$. Therefore, the iterates by Algorithm~\ref{GA:a1} converge to $x_*$ up to the noise. In other words, the parameter $\epsilon$ in WDC and RRIC does not influence the size of the neighborhood that iterates converge to.

\begin{lemma} \label{GA:le10}
Suppose the WDC and RRIC
hold with $200 d \sqrt{d \sqrt{\epsilon}} < 1$ and $x \in \mathcal{B}(x_*, d \sqrt{\epsilon} \|x_*\|)$.  Then
for all $x \neq 0$ and for all $v_x \in \partial f(x)$,
\begin{equation*}
\left\|v_x - \frac{1}{2^d} (x - x_*)\right\| \leq \frac{1}{2^d} \frac{1}{8} \|x - x_*\| + \frac{2}{2^{d/2}} \|e\|.
\end{equation*}
\end{lemma}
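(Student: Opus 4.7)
The plan is to write $v_x - \frac{1}{2^d}(x - x_*) = (v_x - h_x) + (h_x - \frac{1}{2^d}(x - x_*))$ and bound each piece, with the crucial requirement that both bounds scale with $\|x - x_*\|$ rather than $\max(\|x\|, \|x_*\|)$. A direct application of Lemma~\ref{GA:le29} is not enough: its error $a_1 d^3 \sqrt{\epsilon}/2^d \cdot \max(\|x\|, \|x_*\|)$ does not vanish as $x \to x_*$, and using $\|x-x_*\| \leq d\sqrt\epsilon\|x_*\|$ to trade $\|x_*\|$ for $\|x-x_*\|$ goes the wrong way (producing an $d^2 \|x-x_*\|/2^d$ bound, much worse than the target $\frac{1}{16 \cdot 2^d}\|x-x_*\|$). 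So the proof must revisit both pieces with locality-aware bookkeeping.

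For the geometric piece, $h_x - \frac{1}{2^d}(x - x_*) = \frac{1}{2^d} x_* - \tilde h_x$. Writing $\tilde h_x = \frac{\|x_*\|}{2^d}(T_1 \hat{x}_* + T_2 \hat{x})$ with $T_1 = \prod_{i=0}^{d-1}(\pi - \bar{\theta}_i)/\pi$ and $T_2 = \sum_i (\sin\bar{\theta}_i/\pi) \prod_{j > i}(\pi - \bar{\theta}_j)/\pi$, I Taylor-expand in $\theta = \bar{\theta}_0 = \angle(x, x_*)$, which is small by hypothesis. Since $g(\theta) \le \theta$ for small $\theta$, all $\bar{\theta}_i \leq \theta$, and $T_1 = 1 - S + O(d^2\theta^2)$, $T_2 = S + O(d^2\theta^2)$ with $S = \sum \bar{\theta}_i/\pi \leq d\theta/\pi$. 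The leading linear-in-$S$ parts cancel:
\[
(T_1 - 1)\hat{x}_* + T_2 \hat{x} = S(\hat{x} - \hat{x}_*) + O(d^2\theta^2),
\]
whose norm is $O(d^2\theta^2)$ since $\|\hat{x} - \hat{x}_*\| \leq \theta$. Combining with $\theta \leq 2\|x-x_*\|/\|x_*\|$ (valid because $\|x-x_*\| \ll \|x_*\|$) and $\|x - x_*\| \leq d\sqrt\epsilon\|x_*\|$ gives $\|h_x - \frac{1}{2^d}(x - x_*)\| \leq C d^3\sqrt\epsilon/2^d \cdot \|x - x_*\|$; the hypothesis $200 d\sqrt{d\sqrt\epsilon} < 1$ then bounds this by $\frac{1}{16 \cdot 2^d}\|x - x_*\|$.

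For the concentration piece, I would redo the argument behind Lemma~\ref{GA:le29} while keeping track of the angle $\theta$. Writing $v_x = W_x^T A^T A(G(x) - G(x_*)) - W_x^T A^T e$ with $W_x = \prod_{i=d}^1 W_{i,+,x}$, I would (i) use RRIC on the pair $(x, x_*)$ to replace $A^T A$ by the identity with error $\lesssim \epsilon \cdot 2^{-d}\|x - x_*\|$ (exploiting $\|G(x) - G(x_*)\| \lesssim 2^{-d/2}\|x - x_*\|$ and $\|W_x\| \lesssim 2^{-d/2}$), (ii) expand $G(x) - G(x_*) = W_x(x - x_*) + (W_x - W_{x_*}) x_*$ so that iterated WDC on the diagonal pair $(x,x)$ gives $W_x^T W_x = \frac{1}{2^d} I + O(d\sqrt\epsilon/2^d)$, and (iii) apply joint iterated WDC to the cross pair $(x, x_*)$: each layer contributes $Q_{x,x_*} = \frac{\pi - \theta}{2\pi} I + \frac{\sin\theta}{2\pi} M_{\hat x \leftrightarrow \hat{x}_*}$, which tends to $\frac{1}{2} I$ as $\theta \to 0$, so the cross term $W_x^T(W_x - W_{x_*}) x_*$ is bounded by $C(\theta + d^{O(1)}\sqrt\epsilon)\|x_*\|/2^d$. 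Converting via $\theta \leq 2\|x-x_*\|/\|x_*\|$ and the hypothesis yields an $\|x-x_*\|$-scaled bound absorbable in $\frac{1}{16 \cdot 2^d}\|x-x_*\|$. The noise contribution $\|W_x^T A^T e\| \leq 2/2^{d/2}\,\|e\|$ follows from $\|W_x\| \leq \sqrt 2/2^{d/2}$ (iterated WDC) and RRIC-based control of $\|A W_x u\|$ for unit $u \in \mathbb{R}^k$, exactly as in Lemma~\ref{GA:le29}.

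The main obstacle is producing this sharp local version of Lemma~\ref{GA:le29}: every $\epsilon$-level WDC/RRIC deviation must ultimately be scaled by $\|x - x_*\|$ rather than $\|x_*\|$. The key mechanism is that $Q_{x, x_*} \to \frac{1}{2} I$ as $\theta \to 0$, providing a local factor that compensates the WDC's generic $\sqrt\epsilon$-loss at each layer. Propagating these $\theta$-factors through the $d$-layer product and combining with $\theta \leq 2\|x - x_*\|/\|x_*\|$ and the smallness $d^3\sqrt\epsilon \ll 1$ forced by the hypothesis is where the real work lies.
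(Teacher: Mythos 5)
Your overall decomposition differs from the paper's: you split the error as $(v_x-h_x)+(h_x-\tfrac{1}{2^d}(x-x_*))$ and try to localize each piece, whereas the paper never routes through $h_x$ here; it writes $\bar v_x=\Lambda_x^TA^TA(\Lambda_x x-\Lambda_{x_*}x_*)$, strips off $A^TA$ via the RRIC (with error $\epsilon\tfrac{1.2}{2^d}(1+2\epsilon d)\|x-x_*\|$, using $\|\Lambda_x x-\Lambda_{x_*}x_*\|\le\tfrac{1.2}{2^{d/2}}\|x-x_*\|$ from Lemma~\ref{GA:le19}), and then invokes Lemma~\ref{GA:le8}, which telescopes $\Lambda_x x-\Lambda_{x_*}x_*$ layer by layer. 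Your treatment of the geometric piece $h_x-\tfrac{1}{2^d}(x-x_*)$ (cancellation of the linear-in-$S$ terms, leaving $O(d^2\theta^2)\|x_*\|/2^d$) is sound and is a legitimate alternative computation.

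The gap is in your concentration piece, specifically step (iii). You bound the cross term $W_x^T(W_x-W_{x_*})x_*$ by $C(\theta+d^{O(1)}\sqrt{\epsilon})\|x_*\|/2^d$ and claim the hypothesis lets you absorb this into $\tfrac{1}{16\cdot 2^d}\|x-x_*\|$. It does not: the $d^{O(1)}\sqrt{\epsilon}\,\|x_*\|/2^d$ summand is independent of $\|x-x_*\|$ and survives as $x\to x_*$, while the target bound vanishes there. This is the same defect you correctly diagnosed in a naive use of Lemma~\ref{GA:le29}, reappearing one level down. The mechanism you propose to rescue it --- that $Q_{x,x_*}\to\tfrac12 I$ as $\theta\to 0$ --- cannot work, because the WDC's additive $\epsilon$-deviation $\|W_{+,x}^TW_{+,x_*}-Q_{x,x_*}\|\le\epsilon$ is present at $\theta=0$ and, applied in operator norm to $x_*$, always contributes an error proportional to $\|x_*\|$. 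The paper's fix is different in kind: Lemma~\ref{GA:le20} and in particular~\eqref{GA:e35} bound $\|(W_{+,x}-W_{+,y})y\|^2$ by the quadratic form $(x-y)^T\bigl[W_{+,x}^T(W_{+,x}-W_{+,y})+W_{+,y}^T(W_{+,y}-W_{+,x})\bigr](x-y)\le 2(2\epsilon+\theta)\|x-y\|^2$, so that the WDC deviation multiplies $\|x-y\|^2$ rather than $\|y\|^2$. Feeding this into the layerwise telescoping of Lemma~\ref{GA:le8}, together with Lemma~\ref{GA:le19} and the smallness $\sqrt{2(\theta_j+2\epsilon)}\le\tfrac{1}{30\sqrt2 d}$ forced by the hypothesis, is what produces a cross-term bound genuinely proportional to $\|x-x_*\|$. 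Without this (or an equivalent) identity your step (iii) cannot close. A minor additional omission: the extension from differentiable points to arbitrary $v_x\in\partial f(x)$ via the convex-hull representation~\eqref{GA:e56} should be stated, though it is routine.
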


\subsection{Proof of Theorem~\ref{GA:th1}} \label{GA:s4}

The proof can be divided into three parts. We first show that the iterates $\{x_i\}$ converge to a neighborhoods of $x_*$ and $-\rho_d x_*$, whose sizes depend on $\epsilon$ and the noise energy $\|e\|$.
Second, we show that the iterates only converge to the neighborhood of $x_*$ that depends both on $\epsilon$ as well as on the noise energy $\|e\|$.
Lastly, we show that once an iterate is in the aforementioned neighborhood of $x_*$, the subsequent iterates converge to a neighborhood of $x_*$ whose size only depends on the noise $\|e\|$, but not on $\epsilon$.

\begin{enumerate}
\item {\bf Convergence to a neighborhood of $x_*$ or $- \rho_d x_*$:}
We prove that if $\|h_{x_i}\|$ is sufficiently large, specifically if the iterate $x_i$ is not in the set $S_{\beta}$ with
$$
\beta = 4 a_1 d^3 \sqrt{\epsilon} + 26 \|e\| 2^{d/2} / \|x_*\|,
$$
then Algorithm~\ref{GA:a1} makes progress in the sense that $f(x_{i + 1}) - f(x_i)$ is smaller than a certain negative value. Therefore, the iterates of Algorithm~\ref{GA:a1} converge to $S_\beta$. 

Consider $i$ such that $\tilde{x}_i \notin S_\beta$.
Let $\eta_{\hat{x}_i} \in \partial f(\hat{x}_i)$ and define $\hat{x}_i = \tilde{x}_i - a \nu {v}_{\tilde{x}_i}$, where $a \in [0, 1]$. By the mean value  theorem, for some  $a \in [0, 1]$, we have
\begin{align}
f(\tilde{x}_i - \nu {v}_{\tilde{x}_i}) - f(\tilde{x}_i) =& \inner[]{\eta_{\hat{x}_i}}{- \nu {v}_{\tilde{x}_i}} 
\nonumber \\
=& \inner[]{\tilde{v}_{\tilde{x}_i}}{- \nu {v}_{\tilde{x}_i}} + \inner[]{\eta_{\hat{x}_i} - {v}_{\tilde{x}_i}}{- \nu {v}_{\tilde{x}_i}} \nonumber \\
\leq& - \nu \|{v}_{\tilde{x}_i}\| ( \|{v}_{\tilde{x}_i}\| - \|\eta_{\hat{x}_i} - {v}_{\tilde{x}_i}\| ). \label{GA:e21}
\end{align}

Next, we provide a lower and upper bound of the terms $\|{v}_{\tilde{x}_i}\|$ and $\|v_{\hat{x}_{i}} - {v}_{\tilde{x}_i}\|$, respectively, which appear on the right hand side of~\eqref{GA:e21}.

First, we have
\begin{align}
\|{v}_{\tilde{x}_i}\| \geq& \|h_{\tilde{x}_i}\| - \|h_{\tilde{x}_i} - {v}_{\tilde{x}_i}\| \nonumber \\
\geq& 2^{-d} \max(\|\tilde{x}_i\|, \|x_*\|) \left( \beta - a_1 d^3 \sqrt{\eps}  - 2 \|e\| \frac{2^{d/2}}{\|x_*\|} \right) \nonumber \\
\geq& 2^{-d} \max(\|\tilde{x}_i\|, \|x_*\|) \left( 3 a_1 d^3 \sqrt{\eps}  + 24 \|e\| \frac{2^{d/2}}{\|x_*\|} \right) \label{GA:le30} \\
\geq& 2^{-d} \|x_*\| 3 a_1 d^3 \sqrt{\epsilon}, \label{GA:e71}
\end{align}
where the second inequality follows from the definition of $S_\beta$ and Lemma~\ref{GA:le29}, 
and the third inequality follows from the definition of $\beta$.


Second, by Lemma~\ref{GA:le4} and Lemma~\ref{GA:le28}, for all $a \in [0, 1]$ and $i > T$ ($T$ is defined in Lemma~\ref{GA:le28}), we have
\begin{equation} \label{lem:hxhatmhlip}
\|h_{\hat{x}_{i}} - h_{\tilde{x}_{i}}\| \leq \frac{b_0 d^2}{2^d} \|\hat{x}_{i} - \tilde{x}_{i}\|,
\end{equation}
where $\hat{x}_{i} = \tilde{x}_{i} - a \nu {v}_{\tilde{x}_i}$, and $b_0$ is a universal constant.
Thus, for any $v_{\hat{x}_{i}} \in \partial f(\hat{x}_{i}) $,
\begin{align}
\hspace{0.4cm}&\hspace{-0.4cm}\|v_{\hat{x}_{i}} - {v}_{\tilde{x}_i}\|
\leq
\|v_{\hat{x}_{i}} - h_{\hat{x}_{i}}\| + \|h_{\hat{x}_{i}} - h_{\tilde{x}_{i}}\| + \|h_{\tilde{x}_{i}} - {v}_{\tilde{x}_i}\| \nonumber \\
&\leq
a_1 \frac{d^3 \sqrt{\epsilon}}{2^d} \max(\|\hat{x}_{i}\|, \|x_*\|) + \frac{2}{2^{d/2}} \|e\| + \frac{b_0 d^2}{2^d} \|\hat{x}_{i} - \tilde{x}_{i}\| \nonumber \\
&+ a_1 \frac{d^3 \sqrt{\epsilon}}{2^d} \max(\|\tilde{x}_{i}\|, \|x_*\|) + \frac{2}{2^{d/2}} \|e\| \nonumber \\
&\leq
a_1 \frac{d^3 \sqrt{\epsilon}}{2^d} \max(\|\tilde{x}_{i}\| + \nu \|{v}_{\tilde{x}_i}\|, \|x_*\|) + \frac{b_0 d^2}{2^d} \nu \|{v}_{\tilde{x}_i}\| \nonumber \\
&+ a_1 \frac{d^3 \sqrt{\epsilon}}{2^d} \max(\|\tilde{x}_{i}\|, \|x_*\|) + \frac{4}{2^{d/2}} \|e\|  \nonumber \\
&\leq
a_1 \frac{d^3 \sqrt{\epsilon}}{2^d} \left(2 + \frac{\nu d a_6}{2^d}  \right) \max(\|\tilde{x}_{i}\|, \|x_*\|) + \frac{b_0 d^2}{2^d} \nu \|{v}_{\tilde{x}_i}\| + 4 \frac{K_2/d^{42}}{2^{d}} \|x_*\|, \label{eta-minus-vtilde}
\end{align}
where the second inequality follows from Lemma~\ref{GA:le29} and~\eqref{lem:hxhatmhlip},
and the fourth inequality follows from Lemma~\ref{GA:le32}
and the assumption $\|e\| \leq \frac{K_2 \|x_*\|}{d^{42} 2^{d/2}}$.

Combining~\eqref{eta-minus-vtilde} and~\eqref{GA:le30}, we get that
\begin{align*}
\|v_{\hat{x}_{i}} - {v}_{\tilde{x}_{i}}\|
\leq&
\left(
\frac{5}{6}  + \nu b_1 \frac{d^2}{2^d}
\right)
\|{v}_{\tilde{x}_i} \|,
\end{align*}
with the appropriate constants chosen sufficiently small, where $b_1$ is a universal constant. Choosing $v_{\hat{x}_i} = \eta_{\hat{x}_{i}}$ yields
\begin{equation} \label{GA:e62}
\|\eta_{\hat{x}_{i}} - {v}_{\tilde{x}_i}\| \leq \biggl(\frac{5}{6}  + \nu b_0 \frac{d^2}{2^d}\biggr) \| {v}_{\tilde{x}_i} \|.
\end{equation}
Therefore, combining~\eqref{GA:e21} and~\eqref{GA:e62} yields
\begin{equation} \label{GA:e72}
f(\tilde{x}_i - \nu {v}_{\tilde{x}_i}) - f(\tilde{x}_i) \leq - \frac{1}{12} \nu \|{v}_{\tilde{x}_i}\|^2,
\end{equation}
where we used that $\nu b_0 \frac{d^2}{2^d} \leq 1 / 12$ by the assumption that the step size obeys $\nu = K_3 2^d / d^2$ and by
taking $K_3$ appropriately small.
Applying~\eqref{GA:e71} to \eqref{GA:e72} yields
\begin{equation*}
f(\tilde{x}_i - \nu {v}_{\tilde{x}_i}) - f(\tilde{x}_i) \leq - \frac{1}{12} \nu \|{v}_{\tilde{x}_i}\|^2 \leq - 2^{-d} d^4 b_1 \epsilon \|x_*\|^2,
\end{equation*}
where $b_1$ is a universal constant and we used $\nu = K_3 \frac{2^d}{d^2}$. Therefore, there can be at most $\frac{f(x_0) 2^d}{b_1 d^4 \epsilon \|x_*\|^2}$ iterations for which $\tilde{x}_i \notin S_\beta$. In other words, there exists $N \leq \frac{f(x_0) 2^d}{b_1 d^4 \epsilon \|x_*\|^2}$ such that $\tilde{x}_N \in S_\beta$.

\item {\bf Convergence to a neighborhood of $x_*$:}
Note that by the assumption $\|e\| \leq \frac{K_2 \|x_*\| }{d^{42} 2^{d/2} }$ and $\epsilon \leq K_1 / d^{90}$,
our choice of $\beta$ obeys $\beta \leq \frac{1}{64^2 d^{12}}$ for sufficiently small $K_1, K_2$, and thus the assumptions of Lemma~\ref{lemma:Sepsst} are met and we have
\begin{equation} \label{GA:e69}
S_\beta
\subset
\mathcal{B}(x_*, r)
\cup
\mathcal{B}( - \rho_d x_*, \sqrt{r \|x_*\|} d^8).
\end{equation}
Here, we defined the radius
$r = K_{5} d^9 \sqrt{\epsilon} \|x_*\| + K_{6} d^6 \|e\| 2^{d/2}$, and $K_{5}$ and $K_{6}$ are universal constants and are used in~\eqref{GA:e63}.

By the assumption $\|e\| \leq \frac{K_2 \|x_*\| }{d^{42} 2^{d/2} }$ and $\epsilon \leq K_1 / d^{90}$ and choosing $K_1$ and $K_2$ sufficiently small, we have $r \leq a_4 d^{-10} \|x_*\|$ and $\sqrt{r \|x_*\|} d^8 \leq a_4 d^{-10} \|x_*\|$.
Note that the powers of $d$ in the upper bounds of $\|e\|$ and $\epsilon$, which are $-42$ and $-90$ respectively, are used to get $\sqrt{r \|x_*\|} d^8 \leq a_4 d^{-10} \|x_*\|$. It follows from~\eqref{GA:e69} that
$$
S_\beta^+ \subset \mathcal{B}(x_*, a_4 d^{-10} \|x_*\|) \hbox{ and } S_\beta^- \subset \mathcal{B}(- \rho_d x_*, a_4 d^{-10} \|x_*\|).
$$
Therefore, by Lemma~\ref{GA:le14}, for any $x \in S_\beta^-$ and $y \in S_\beta^+$, it holds that $f(x) > f(y)$.
Thus, if $\tilde{x}_N \in S_\beta$, then $\tilde{x}_N$ must be in $S_\beta^+$ due to the operations from Step~\ref{GA:a1:st3} to Step~\ref{GA:a1:st8}. 

We claim that if $x_i$ is inside the ball $\mathcal{B}(x_*, r)$, then all iterates afterward stay in $\mathcal{B}(x_*, 2 r)$. To see this, note that by Lemma~\ref{GA:le32} and the choice of the step size, we have for any $v_{\tilde{x}_i} \in {\partial f({\tilde{x}_i})}$,
$\nu \|v_{\tilde{x}_i}\| \leq \frac{a_6}{d 2^d} \max(\|x\|, \|x_*\|)$.

\item {\bf Convergence to $x_*$ up to the noise $e$:}
Next we show that for any $i \geq N$, it holds that $x_i \in \mathcal{B}(x_*, a_4 d^{-10} \|x_*\|)$, $\tilde{x}_i = x_i$, and
\begin{align*}
\|x_{i + 1} - x_*\| \leq b_2^{i + 1 - N} \|x_N - x_*\| + b_4 2^{d/2} \|e\|.
\end{align*}
where $a_4$ is defined in Lemma~\ref{GA:le14}, $b_2 = 1 - \frac{\nu}{2^d} \frac{7}{8}$ and $b_4$ is a universal constant.

Suppose $\tilde{x}_i \in \mathcal{B}(x_*, a_4 d^{-10} \|x_*\|)$. By the assumption $\epsilon \leq K_1 / d^{90}$ for sufficiently small $K_1$, the assumptions in Lemma~\ref{GA:le10} are met. Therefore,
\begin{align}
\|x_{i + 1} - x_*\|
=& \|\tilde{x}_i - \nu {v}_{\tilde{x}_i} - x_*\|   \nonumber \\
=& \|\tilde{x}_i - x_*  -  \frac{\nu}{2^d} (\tilde{x}_i - x_*) - \nu {v}_{\tilde{x}_i} + \frac{\nu}{2^d} (\tilde{x}_i - x_*) \| \nonumber \\
\leq& \left( 1 - \frac{\nu}{2^d} \right) \|\tilde{x}_i - x_*\| + \nu \|{v}_{\tilde{x}_i} - \frac{1}{2^d} (\tilde{x}_i - x_*)\| \nonumber \\
\leq& \left( 1 - \frac{\nu}{2^d} \right) \|\tilde{x}_i - x_*\| + \nu \left( \frac{1}{8} \frac{1}{2^d} \|\tilde{x}_i - x_*\| + \frac{2}{2^{d/2}} \|e\| \right) \nonumber \\
=& \left( 1 - \frac{\nu}{2^d} \frac{7}{8} \right) \|\tilde{x}_i - x_*\| + \nu \frac{2}{2^{d/2}} \|e\|, \label{GA:e64}
\end{align}
where the second inequality holds by Lemma~\ref{GA:le10}.
By the assumptions $\tilde{x}_i \in \mathcal{B}(x_*, a_4 d^{-10} \|x_*\|)$, $\|e\| \leq \frac{K_2 \|x_*\| }{d^{42} 2^{d/2} }$, and using~\eqref{GA:e64}, we have $x_{i + 1} \in \mathcal{B}(x_*, a_4 d^{-10} \|x_*\|)$. In addition, using Lemma~\ref{GA:le14} yields that $\tilde{x}_{i + 1} = x_{i+1}$. Repeat the above steps yields that $x_i \in \mathcal{B}(x_*, a_4 d^{-10} \|x_*\|)$ and $\tilde{x}_i = x_i$ for all $ i \geq N$.

Using~\eqref{GA:e64} and $\nu = K_3 \frac{2^d}{d^2}$, we have
\begin{align} \label{GA:e65}
\|x_{i + 1} - x_*\|\leq b_2 \|x_i - x_*\| + b_3 \frac{2^{d/2}}{d^2} \|e\|,
\end{align}
where $b_2 = 1 - 7 K_3 / (8 d^2)$ and $b_3$ is a universal constant. Repeatedly applying~\eqref{GA:e65} yields
\begin{align*}
\|x_{i + 1} - x_*\| \leq& b_2^{i + 1 - N} \|x_N - x_*\| + (b_2^{i-N} + b_2^{i - N - 1} + \cdots + 1) \frac{b_3 2^{d/2}}{d^2} \|e\| \\
\leq& b_2^{i + 1 - N} \|x_N - x_*\| + \frac{b_3 2^{d/2}}{(1 - b_2) d^2} \|e\| \\
\leq& b_2^{i + 1 - N} \|x_N - x_*\| + b_4 2^{d/2} \|e\|,
\end{align*}
where the last inequality follows from the definition of $b_2$ and the step size $\nu = K_3 \frac{2^d}{d^2}$, and $b_4$ is a universal constant. This finishes the proof for~\eqref{GA:e68}. Inequality~\eqref{GA:e74} follows from Lemma~\ref{GA:le19}.
\end{enumerate}

This concludes the proof of  our main result.
In the remainder, we provide proofs of  the lemmas above.

\subsection{Proof of Lemma~\ref{GA:le4}}

It holds that
\begin{align}
&\|x - y\| \geq 2 \sin (\theta_{x, y} / 2) \min(\|x\|, \|y\|), &\forall x, y \label{GA:e4} \\
&\sin(\theta / 2) \geq \theta / 4, &\forall \theta \in [0, \pi] \label{GA:e5} \\
&\frac{d}{d \theta} g(\theta) \in [0, 1] &\forall \theta \in [0, \pi] \label{GA:e13} 
\end{align}
where $\theta_{x, y} = \angle(x, y)$.

For brevity of notation, let $\zeta_{j, z} = \prod_{i = j}^{d - 1} \frac{\pi - \bar{\theta}_{i, z, x_*}}{\pi}$.
Combining \eqref{GA:e4} and~\eqref{GA:e5} gives $|\bar{\theta}_{0, x, x_*} - \bar{\theta}_{0, y, x_*}| \leq 4 \max\left( \frac{1}{\|x\|}, \frac{1}{\|y\|} \right) \|x - y\|$.
Inequality~\eqref{GA:e13} implies $|\bar{\theta}_{i, x, x_*} - \bar{\theta}_{i, y, x_*}| \leq |\bar{\theta}_{j, x, x_*} - \bar{\theta}_{j, y, x_*}|, \forall i \geq j$.
It follows that
\begin{align}
\|h_{x, x_*} - h_{y, x_*}\| \leq& \frac{1}{2^d}\|x - y\| + \frac{1}{2^d} \underbrace{\left| \zeta_{0, x} - \zeta_{0, y} \right|}_{T_1} \|x_*\| \nonumber \\
+& \frac{1}{2^d} \underbrace{ \left| \sum_{i = 0}^{d - 1} \frac{\sin \bar{\theta}_{i, x, x_*}}{\pi} \zeta_{i + 1, x} \hat{x} - \sum_{i = 0}^{d - 1} \frac{\sin \bar{\theta}_{i, y, x_*}}{\pi} \zeta_{i + 1, y} \hat{y} \right| }_{T_2} \|x_*\|. \label{GA:e50}
\end{align}
We use the following result which is proven later in Lemma~\ref{GA:le17}:
\begin{align} \label{GA:e53}
T_1 \leq \frac{d}{\pi} |\bar{\theta}_{0, x, x_*} - \bar{\theta}_{0, y, x_*}| \leq \frac{4 d}{\pi}\max\left( \frac{1}{\|x\|}, \frac{1}{\|y\|} \right) \|x - y\|.
\end{align}
Additionally, it holds that
\begin{align}
T_2 =& \left| \sum_{i = 0}^{d - 1} \frac{\sin \bar{\theta}_{i, x, x_*}}{\pi} \zeta_{i + 1, x} \hat{x} - \frac{\sin \bar{\theta}_{i, x, x_*}}{\pi} \zeta_{i + 1, x} \hat{y} + \frac{\sin \bar{\theta}_{i, x, x_*}}{\pi} \zeta_{i + 1, x} \hat{y} - \sum_{i = 0}^{d - 1} \frac{\sin \bar{\theta}_{i, y, x_*}}{\pi} \zeta_{i + 1, y} \hat{y} \right| \nonumber \\
\leq& \frac{d}{\pi} \|\hat{x} - \hat{y}\| + \underbrace{ \left| \sum_{i = 0}^{d - 1} \frac{\sin \bar{\theta}_{i, x, x_*}}{\pi} \zeta_{i + 1, x} - \sum_{i = 0}^{d - 1} \frac{\sin \bar{\theta}_{i, y, x_*}}{\pi} \zeta_{i + 1, y} \right|}_{T_3} . \label{GA:e51}
\end{align}
We have
\begin{align}
T_3 \leq& \sum_{i = 0}^{d - 1} \left[ \left| \frac{\sin \bar{\theta}_{i, x, x_*}}{\pi} \zeta_{i + 1, x} -\frac{\sin \bar{\theta}_{i, x, x_*}}{\pi} \zeta_{i + 1, y} \right|\right. + \left.\left| \frac{\sin \bar{\theta}_{i, x, x_*}}{\pi} \zeta_{i + 1, y} - \frac{\sin \bar{\theta}_{i, y, x_*}}{\pi} \zeta_{i + 1, y} \right| \right] \nonumber \\
\leq& \sum_{i = 0}^{d - 1} \left[ \frac{1}{\pi} \left( \frac{d - i  - 1}{\pi} \left|\bar{\theta}_{i-1, x, x_*} - \bar{\theta}_{i-1, y, x_*}\right| \right) + \frac{1}{\pi} |\sin \bar{\theta}_{i, x, x_*} - \sin \bar{\theta}_{i, y, x_*}|  \right] \nonumber \\
\leq& \frac{d^2}{\pi} |\bar{\theta}_{0, x, x_*} - \bar{\theta}_{0, y, x_*}| \leq \frac{4 d^2}{\pi} \max\left(\frac{1}{\|x\|}, \frac{1}{\|y\|}\right) \|x - y\|. \label{GA:e52}
\end{align}
Using~\eqref{GA:e4} and~\eqref{GA:e5} and noting $\|\hat{x} - \hat{y}\| \leq \theta_{x, y, x_*}$ yield
\begin{equation} \label{GA:e15}
\|\hat{x} - \hat{y}\| \leq \theta_{x, y, x_*} \leq 2 \max\left( \frac{1}{\|x\|}, \frac{1}{\|y\|} \right) \|x - y\|.
\end{equation}
Finally, combining~\eqref{GA:e50}, \eqref{GA:e53}, \eqref{GA:e51}, \eqref{GA:e52} and~\eqref{GA:e15} yields the result.

\subsection{Proof of Lemma~\ref{GA:le29}}

Let $\bar{v}_x = \left(\prod_{i = d}^1 W_{i, +, x}\right)^T A^T (A \left(\prod_{i = d}^1 W_{i, +, x}\right) x - A \left(\prod_{i = d}^1 W_{i, +, x_*}\right) x_*) $ and $q_x = \left(\prod_{i = d}^1 W_{i, +, x}\right)^T A^T e$. Therefore, $\tilde{v}_x = \bar{v}_x - q_x$.

For any $x \neq 0$ and suppose $G(x)$ is differentiable at $x$, we have
\begin{align}
\|\tilde{v}_x - h_x\|
&=
\|\bar{v}_x + q_x- h_x\| \leq
\|\bar{v}_x-h_x\| + \|q_x\| \nonumber \\
&\leq
b_0 \frac{d^3 \sqrt{\eps}}{2^d} \max ( \|x\|, \| x_* \|)
+
\|q_x\| \nonumber \\
&\leq
b_0 \frac{d^3 \sqrt{\eps}}{2^d} \max ( \|x\|, \| x_* \|)
+
\frac{2}{2^{d/2}} \|e\|, \label{GA:e70}
\end{align}
where the second inequality follows from~\cite[(29)]{HV17} and the third inequality follows from Lemma~\ref{GA:le31} given later.

Since $f(x)$ is a piecewise quadratic function, by~\cite[Theorem~9.6]{Clason2017}, we have
\begin{equation*}
\partial f(x) = conv(v_1, v_2, \ldots, v_t),
\end{equation*}
where $conv$ denotes the convex hull of the vectors $v_1, \ldots, v_t$, $t$ is the number of quadratic functions adjoint to $x$ and $v_i$ is the gradient of the $i$-th quadratic function at $x$. Therefore, for any $v \in \partial f(x)$, there exist $c_1, c_2, \ldots, c_t \geq 0$ such that $c_1 + c_2 + \ldots + c_t = 1$ and $v = c_1 v_1 + c_2 v_2 + \ldots + c_t v_t$. Note that for any $v_i$, there exists $u_i$ so that $v_i = \lim_{\delta \rightarrow 0^+} \nabla f(x + \delta_i u_i)$, and $f$ is differentiable at $f(x + \delta u_i)$ for sufficiently small $\delta$.

The proof is concluded by appealing to the continuity of $h_x$ with respect to nonzero $x$, inequality~\eqref{GA:e70}, and by  noting that
\begin{align*}
\|v_x - h_x\|
&\leq \sum_i c_i \|v_i - h_x\|
= \sum_i c_i \|\lim_{\delta_i \rightarrow 0^+} \nabla f(x + \delta_i u_i) - h_x\| \\
&= \sum_i c_i \lim_{\delta_i \rightarrow 0^+} \|\nabla f(x + \delta_i u_i) - h_{x + \delta_i u_i}\| = \sum_i c_i \lim_{\delta_i \rightarrow 0^+} \|\tilde{v}_{x + \delta_i u_i} - h_{x + \delta_i u_i}\| \\
&\leq b_0 \frac{d^3 \sqrt{\eps}}{2^d} \max ( \|x\|, \| x_* \|)
+
\frac{2}{2^{d/2}} \|e\|,
\end{align*}
where we used the inequality above and that $\sum_i c_i = 1$.

\subsection{Proof of Lemma~\ref{GA:le28}}

First suppose that $\tilde{x}_i \in \mathcal{B}(0,\frac{1}{16\pi} \|x_*\|)$.
We show that after a polynomial number of iterations $N$, we have that $\tilde{x}_{i+N} \notin \mathcal{B}(0,\frac{1}{16\pi} \|x_*\|)$.
Below, we use that
\begin{align}
\label{eq:properstepzero}
\text{
$\inner[]{x}{{v}_{x}} < 0$
and
$\|{v}_{x}\| \geq \frac{1}{2^d 16\pi} \|x_*\|$
for all
$x \in \mathcal{B}(0, \frac{1}{16\pi} \|x_*\|)$} \hbox{ and } v_x \in {\partial f(x)},
\end{align}
which will be proven later.
It follows that for any
$\tilde{x}_i \in \mathcal{B}(0, \frac{1}{16\pi} \|x_*\|)$,
$\tilde{x}_i$ and the next iterate produced by the algorithm,
$x_{i+1} = \tilde{x}_i - \nu {v}_{\tilde{x}_i}$, and the origin form an obtuse triangle.
As a consequence,
\begin{align}
\|\tilde{x}_{i + 1}\|^2 = \|x_{i + 1}\|^2
&\geq
\|\tilde{x}_i\|^2 + \nu^2 \|{v}_{\tilde{x}_i}\|^2 \nonumber \\
&\geq
\|\tilde{x}_i\|^2 + \nu^2 \frac{1}{(2^{d} 16\pi)^2} \|x_*\|^2, \label{GA:e73}
\end{align}
where the last inequality follows from~\eqref{eq:properstepzero}.
Thus, the norm of the iterates $\tilde{x}_i$ will increase until after $\bigl(\frac{2^d}{\nu} \bigr)^2$  iterations, we have $\tilde{x}_{i+N} \notin \mathcal{B}(0, \frac{1}{16\pi} \|x_*\|)$.

Consider $\tilde{x}_i \notin \mathcal{B}(0, \frac{1}{16\pi} \|x_*\|)$, and note that
\begin{align*}
\nu \|{v}_{\tilde{x}_i}\| \leq \nu \frac{d a_6}{2^d} \max(\|\tilde{x}_i\|, \|x_*\|) \leq \nu\frac{16 \pi a_6 d}{2^d}\|\tilde{x}_i\| \leq \frac{1}{2}\|\tilde{x}_i\|,
\end{align*}
where the first inequality follows from Lemma~\ref{GA:le32}, the second inequality from $\|\tilde{x}_i\| \geq \frac{1}{16\pi} \|x_*\|$, and finally the last inequality from our assumption on the step size $\nu$. Therefore, from $x_{i + 1} = \tilde{x}_i - \nu {v}_{\tilde{x}_i}$, we have that $t \tilde{x}_i + (1 - t) x_{i+1} \notin \mathcal{B}(0, \frac{1}{32 \pi} \|x_*\|)$ for all $t \in [0, 1]$, which completes the proof.

It remains to prove~\eqref{eq:properstepzero}.
We start with proving $\inner[]{x}{\tilde{v}_x} < 0$.
For brevity of notation, let $\Lambda_{z} = \prod_{i = d}^1 W_{i, +, z}$. We have
We have
\begin{align*}
x^T \tilde{v}_x =&
\inner[]{ \Lambda_x^T A^T A \Lambda_x x - \Lambda_x^T A^T A \Lambda_{x_*} x_* + \Lambda_x^T A^T e }{x} \\
\leq& \inner[]{ \Lambda_x^T A^T A \Lambda_x x - \Lambda_x^T A^T A \Lambda_{x_*} x_* - \Lambda_x^T \Lambda_x x + \Lambda_x^T \Lambda_{x_*} x_* }{x} \\
&+ \inner[]{ \Lambda_x^T \Lambda_x x - \Lambda_x^T \Lambda_{x_*} x_* + \Lambda_x^T A^T e }{x}  \\
\leq& \epsilon \|\Lambda_x x\|^2 + \epsilon \|\Lambda_x x\| \|\Lambda_{x_*} x_*\| + \inner[]{ \Lambda_x^T \Lambda_x x - \Lambda_x^T \Lambda_{x_*} x_* + \Lambda_x^T A^T e }{x}  \\
\leq&
\frac{13}{12}2^{-d} \|x\|^2 - \frac{1}{4 \pi} \frac{1}{2^d} \|x\| \|x_*\| +
\|x\| \frac{2}{2^{d/2}} \|e\| \\
\leq&
\|x\| \left( \frac{13}{12}2^{-d} \|x\| +
\frac{1/(8\pi)}{2^{d}} \|x_*\|
- \frac{1}{4 \pi} \frac{1}{2^d} \|x_*\| \right) \\
\leq&
\|x\| \frac{1}{2^d} \left( 2\|x\| - \frac{1}{8 \pi} \|x_*\| \right).
\end{align*}
The second inequality follows from RRIC, ~\cite[(10)]{HV17} that $\|\Lambda_x x\|^2 \leq \frac{1+4\epsilon d}{2^d} \leq \frac{13}{12} \frac{1}{2^d}$; the third inequality follows from~\cite[Lemma~6]{HV17} that $\inner[]{\Lambda_x x}{\Lambda_{x_*} x_*} \geq \frac{1}{4 \pi} \frac{1}{2^d} \|x\| \|x_*\|$, and the fourth inequality follows from Lemma~\ref{GA:le31}.
Therefore, for any $x \in \mathcal{B}(0, \frac{1}{16\pi} \|x_*\|)$, $\inner[]{x}{ \tilde{v}_{x} } < 0$,  as desired.

If $G(x)$ is differentiable at $x$, then $v_x = \tilde{v}_x$ and $\inner[]{x}{{v}_x} < 0$. If $G(x)$ is not differentiable at $x$, by equation~\eqref{GA:e56}, we have
\begin{align*}
x^T {v}_x =& x^T (c_1 v_1 + c_2 v_2 + \cdots + c_t v_t) \leq (c_1 + c_2 + \ldots + c_t) \|x\| \frac{1}{2^d} \left( 2\|x\| - \frac{1}{8 \pi} \|x_*\| \right) \\
=& \|x\| \frac{1}{2^d} \left( 2\|x\| - \frac{1}{8 \pi} \|x_*\| \right),
\end{align*}
for all $v_x \in \partial f(x)$.

We next show that, for any
$x \in \mathcal{B}(0, \frac{1}{16\pi} \|x_*\|)$, and $v_x \in \partial f(x)$, it holds that $\|{v}_{x}\| \geq \frac{1}{2^d 16\pi} \|x_*\|$.

If $G(x)$ is differentiable at $x$, then
the local linearity of $G$ gives that $G(x + z) - G(x) = \Lambda_{x} z$ for any sufficiently small $z \in \mathbb{R}^k$. Using the RRIC and \cite[(10)]{HV17}, we have
\begin{align*}
&|\langle A \Lambda_{x} z, A \Lambda_{x} x - A \Lambda_{x_*} x_*\rangle - \inner[]{\Lambda_{x} z}{\Lambda_{x} x - \Lambda_{x_*} x_*} | \\
\leq& \epsilon \left\| \Lambda_{x} z \right\| \left\| \Lambda_{x} x \right\| + \left\|\Lambda_{x_*} x_* \right\| \leq \epsilon \frac{1}{2^{d}} (1 + 2 \epsilon d)^2 (\|x\| + \|x_*\|) \|z\|.
\end{align*}
Therefore, $\left\|\Lambda_{x}^T A^T (A\Lambda_{x} x - A\Lambda_{x_*} x_* ) - \Lambda_{x}^T (\Lambda_{x} x - \Lambda_{x_*} x_* )\right\| \leq \epsilon \frac{1}{2^{d}} (1 + 2 \epsilon d)^2 (\|x\| + \|x_*\|)$. Therefore, we have
\begin{align*}
\|{v}_x\| =& \|\Lambda_x^T A^T (A \Lambda_x x -  A \Lambda_{x_*} x_*) - \Lambda_x^T (\Lambda_x x - \Lambda_{x_*} x_*) + \Lambda_x^T \Lambda_x x - \Lambda_x^T \Lambda_{x_*} x_* + \Lambda_x^T e\| \\
\geq& \|\Lambda_x^T \Lambda_{x_*} x_*\| - \|\Lambda_x^T \Lambda_x x\| - \|\Lambda_x^T A^T (A \Lambda_x x -  A \Lambda_{x_*} x_*) - \Lambda_x^T (\Lambda_x x - \Lambda_{x_*} x_*) \| \\
&- \epsilon \frac{1}{2^{d}} (1 + 2 \epsilon d)^2 (\|x\| + \|x_*\|) -
\|\Lambda_x^T e\| \\
\geq&
\frac{1}{4 \pi} \frac{1}{2^d} \|x_*\| - \frac{13}{12}\frac{1}{2^d} \|x\|- \epsilon \frac{1}{2^{d}} (1 + 2 \epsilon d)^2 (\|x\| + \|x_*\|) -\frac{2}{2^{d/2}} \|e\|  
\\
\geq&
\frac{1}{2^d} \left( \frac{1}{8 \pi} - \frac{1}{16\pi} \right) \|x_*\|.
\end{align*}
If $G(x)$ is not differentiable at $x$, by equation~\eqref{GA:e56}, we have
\begin{align*}
\|v_x\| =& \|c_1 v_1 + c_2 v_2 + \cdots + c_t v_t\| \leq c_1\|v_1\| + c_2\|v_2\| + \ldots + c_t\|v_t\| = \frac{1}{2^d} \left( \frac{1}{8 \pi} - \frac{1}{16\pi} \right) \|x_*\|,
\end{align*}
for all $v_x \in \partial f(x)$.
This concludes the proof of~\eqref{eq:properstepzero}.

\subsection{Proof of Lemma~\ref{GA:le14}}

Consider the function
\[
f_\eta(x) = f_0(x) - \langle A G(x) - A G(x_*), e \rangle,
\]
and note that $f(x) = f_\eta(x) + \|e\|^2$.
Consider $x \in \mathcal{B}(\phi_d x_*, \varphi \|x_*\|)$, for a $\varphi$ that will be specified later.
Note that
\begin{align*}
\left|\inner[]{A G(x) - A G(x_*)}{e}\right|
&\leq
|\inner[]{A \prod_{i = d}^1 W_{i, +, x} x}{e}|
+
|\inner[]{A \prod_{i = d}^1 W_{i, +, x_*} x_*}{e}| \\
&=
|\inner[]{x}{(\prod_{i = d}^1 W_{i, +, x})^T A^T e}|
+
|\inner[]{x_*}{(\prod_{i = d}^1 W_{i, +, x_*})^T A^T e}| \\
&\leq (\|x\| + \|x_*\|) \frac{2}{2^{d/2}}\|e\| \\
&\leq (\varphi \|x_*\| + \|x_*\|) \frac{2}{2^{d/2}} \|e\|,
\end{align*}
where the second inequality holds by Lemma~\ref{GA:le31}, and the last inequality holds by our assumption on $x$.
Thus, by Lemma~\ref{GA:le13} and Lemma~\ref{GA:le15}, we have
\begin{align}
f_\eta(x)
\leq&
f^E_0(x) + |f_0(x) - f^E_0(x)| +
\left|\inner[]{A G(x) - A G(x_*)}{e}\right|
\nonumber \\
\leq&
\frac{1}{2^{d+1}} \left( \phi_d^2 - 2 \phi_d + \frac{10}{a_8^3} d \varphi \right) \|x_*\|^2 + \frac{1}{2^{d+1}} \|x_*\|^2 \nonumber \\
&+
\frac{\epsilon (1 + 4 \epsilon d)}{2^d} \|x\|^2 + \frac{\epsilon(1 + 4 \epsilon d) + 48 d^3 \sqrt{\epsilon}}{2^{d+1}} \|x\| \|x_*\| + \frac{\epsilon (1 + 4 \epsilon d)}{2^d} \|x_*\|^2 \nonumber \\
&+ (\varphi \|x_*\| + \|x_*\|) \frac{2}{2^{d/2}} \|e\| \label{GA:e67}
\end{align}
Additionally, for $x \in \mathcal{B}(\phi_d x_*, \varphi  \|x_*\|)$, we have
\begin{align}
\eqref{GA:e67} \leq&
\frac{1}{2^{d+1}} \left( \phi_d^2 - 2 \phi_d + \frac{10}{a_8^3} d \varphi \right) \|x_*\|^2 + \frac{1}{2^{d+1}} \|x_*\|^2 \nonumber \\
&+
\frac{\epsilon (1 + 4 \epsilon d)}{2^d} (\phi_d + \varphi)^2 \|x_*\|^2 + \frac{\epsilon(1 + 4 \epsilon d) + 48 d^3 \sqrt{\epsilon}}{2^{d+1}} (\phi_d + \varphi)\|x_*\|^2 + \frac{\epsilon (1 + 4 \epsilon d)}{2^d} \|x_*\|^2  \nonumber \\
&+
(\varphi \|x_*\| + \|x_*\|) \frac{2}{2^{d/2}} \|e\| \nonumber \\
\leq&
\frac{ \|x_*\|^2 }{2^{d+1}}
\left(
1+
\phi_d^2 - 2 \phi_d + \frac{10}{a_8^3} d \epsilon
+
68 d^2 \sqrt{\epsilon}
\right)
+
(\varphi \|x_*\| + \|x_*\|) \frac{2}{2^{d/2}} \|e\| \label{GA:e42}
\end{align}
where the last inequality follows from $\epsilon < \sqrt{\epsilon}$, $\rho_d \leq 1$, $4 \epsilon d < 1$, $\varphi < 1$ and assuming $\varphi = \epsilon$.

Similarly, we have that for any $y \in \mathcal{B}(- \phi_d x_*, \varphi  \|x_*\|)$
\begin{align}
f_\eta(y) \geq& \mathbb{E}[f(y)] - |f(y) - \mathbb{E}[f(y)]|
-
\left|\inner[]{A G(x) - A G(x_\ast)}{e}\right|
  \nonumber \\
\geq& \frac{1}{2^{d+1}} \left(\phi_d^2 - 2 \phi_d \rho_d - 10 d^3 \varphi \right) \|x_*\|^2 + \frac{1}{2^{d+1}} \|x_*\|^2 \nonumber \\
&- \left( \frac{\epsilon (1 + 4 \epsilon d)}{2^d} \|y\|^2 + \frac{\epsilon(1 + 4 \epsilon d) + 48 d^3 \sqrt{\epsilon}}{2^{d+1}} \|y\| \|x_*\| + \frac{\epsilon (1 + 4 \epsilon d)}{2^d} \|x_*\|^2 \right) \nonumber \\
&- (\varphi \|x_*\| + \|x_*\|) \frac{2}{2^{d/2}} \|e\| \nonumber \\
\geq&
\frac{\|x_*\|^2}{2^{d+1}} \left(1 + \phi_d^2 - 2 \phi_d \rho_d - 10 d^3 \varphi
- 68 d^2 \sqrt{\epsilon}
\right) -(\varphi \|x_*\| + \|x_*\|) \frac{2}{2^{d/2}} \|e\|
\label{GA:e43}
\end{align}
Using $\epsilon < \sqrt{\epsilon}$, $\rho_d \leq 1$, $4 \epsilon d < 1$, $\varphi < 1$, $\|e\| \leq \frac{K_2\|x_*\|}{d^{42} 2^{d/2}} \leq \frac{K_2\|x_*\|}{d^{2} 2^{d/2}}$ and assuming $\varphi = \epsilon$, the right side of~\eqref{GA:e42} is smaller than the right side of~\eqref{GA:e43} if
\begin{equation} \label{GA:e44}
\varphi = \epsilon \leq \left(\frac{(1  - \rho_d) \phi_d  - 4 K_2/d^2}{\left( 125 + \frac{5}{a_8^3} \right) d^3} \right)^2.
\end{equation}
It follows from Lemma~\ref{lemma:rho-d} that $1 - \rho_d \geq 1 / (a_7(d+2)^2)$.
Thus, it suffices to have $\varphi = \epsilon = \frac{a_4}{d^{10}}$ and $4 K_2 / d^2 \leq \frac{1}{2} \frac{1}{a_7 (d+2)^2} \leq 1 - \rho_d$ for an appropriate universal constant $K_2$, and for an appropriate universal constant $a_4$.  

\subsection{Proof of Lemma~\ref{GA:le10}}

For brevity of notation, let $\Lambda_{j, z} = \prod_{i = j}^1 W_{i, +, z}$.
Suppose the function $G(x)$ is differentiable at $x$. Then the local linearity of $G$ gives that $G(x + z) - G(x) = \Lambda_{j, x} z$ for any sufficiently small $z \in \mathbb{R}^k$. Using the RRIC, \cite[(10)]{HV17} and Lemma~\ref{GA:le19}, we have
\begin{align*}
&|\langle A \Lambda_{j, x} z, A \Lambda_{j, x} x - A \Lambda_{j, x_*} x_*\rangle - \inner[]{\Lambda_{j, x} z}{\Lambda_{j, x} x - \Lambda_{j, x_*} x_*} | \\
\leq& \epsilon \left\| \Lambda_{j, x} z \right\| \left\| \Lambda_{j, x} x - \Lambda_{j, x_*} x_* \right\| \leq \epsilon \frac{1}{2^{\frac{d}{2}}} (1 + 2 \epsilon d) \left\| \Lambda_{j, x} x - \Lambda_{j, x_*} x_* \right\| \|z\|\\
\leq& \epsilon \frac{1.2}{2^{d}} (1 + 2 \epsilon d) \|x - x_*\| \|z\|.
\end{align*}
Therefore, $\left\|\bar{v}_{x} - \Lambda_{j, x}^T (\Lambda_{j, x} x - \Lambda_{j, x_*} x_* )\right\| \leq \epsilon \frac{1.2}{2^{d}} (1 + 2 \epsilon d) \|x - x_*\| \leq \frac{1}{16} \frac{1}{2^d} \|x - x_*\|$. Combining with Lemma~\ref{GA:le8} yields that
$$
\|\bar{v}_{x} - \frac{1}{2^d} (x - x_*)\| \leq \frac{1}{2^d} \frac{1}{8} \|x - x_*\|.
$$
It follows that
$$
\|\tilde{v}_x - \frac{1}{2^d} (x - x_*)\| = \|\bar{v}_x + q_x - \frac{1}{2^d} (x - x_*)\| \leq \frac{1}{2^d} \frac{1}{8} \|x - x_*\| + \frac{2}{2^{d/2}} \|e\|.
$$

For any $x \neq 0$ and for any $v \in \partial f(x)$, by~\eqref{GA:e56}, there exist $c_1, c_2, \ldots, c_t \geq 0$ such that $c_1 + c_2 + \ldots + c_t = 1$ and $v = c_1 v_1 + c_2 v_2 + \ldots + c_t v_t$. It follows that
$\|v - \frac{1}{2^d} (x - x_*)\| \leq \sum_{j = 1}^t c_j \|v_j - \frac{1}{2^d} (x - x_*)\| \leq \frac{1}{2^d} \frac{1}{8} \|x - x_*\| + \frac{2}{2^{d/2}} \|e\|$.


\bibliographystyle{alpha}


\bibliography{WHlibrary}


\appendix

\section{Supporting Lemmas}

Lemma~\ref{GA:le32} is used in proofs for Section~\ref{GA:s4} and Lemma~\ref{GA:le28}.
\begin{lemma} \label{GA:le32}
Suppose that the WDC and RRIC holds with $\epsilon < 1/(16 \pi d^2)^2$ and that the noise $e$ satisfies $\|e\| \leq a_5 2^{-d/2} \|x_*\|$. Then, for all $x$ and all $v_x \in \partial f(x)$,
\begin{align} \label{GA:e23}
\|{v}_x\|
\leq
\frac{a_6 d}{2^d} \max(\|x\|, \|x_*\|),
\end{align}
where $a_5$ and $a_6$ are universal constants.
\end{lemma}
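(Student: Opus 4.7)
\subsection*{Proof proposal for Lemma~\ref{GA:le32}}

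The plan is to combine the explicit form of $h_x$ with the concentration bound from Lemma~\ref{GA:le29}, and then handle the noise contribution via Lemma~\ref{GA:le31} (already used in the preceding proofs of this section).

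First, suppose $G$ is differentiable at $x$, so that $v_x = \tilde{v}_x$. I would write $\tilde{v}_x = h_x + (\tilde{v}_x - h_x)$ and use Lemma~\ref{GA:le29} to bound
\[
\|\tilde{v}_x - h_x\| \leq a_1 \frac{d^3 \sqrt{\epsilon}}{2^d} \max(\|x\|, \|x_*\|) + \frac{2}{2^{d/2}} \|e\|.
\]
Under the standing hypothesis $\epsilon < 1/(16 \pi d^2)^2$, we have $d^3 \sqrt{\epsilon} \leq d/(16\pi)$, so the first term is absorbed into a bound of order $d \cdot 2^{-d} \max(\|x\|,\|x_*\|)$. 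Under $\|e\| \leq a_5 2^{-d/2} \|x_*\|$, the noise term contributes at most $2 a_5 \cdot 2^{-d} \|x_*\|$, which is similarly of the desired order.

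Next I would bound $\|h_x\|$ directly from its definition in Section~\ref{GA:s5}. Using $\left|\frac{\pi - \bar\theta_{i,x,x_*}}{\pi}\right| \leq 1$ and $|\sin \bar\theta_{i,x,x_*}| \leq 1$ term by term, the triangle inequality gives
\[
\|h_x\| \leq \frac{1}{2^d}\|x\| + \frac{1}{2^d}\|x_*\| + \frac{d}{\pi 2^d}\|x_*\| \leq \frac{c\, d}{2^d} \max(\|x\|, \|x_*\|)
\]
for an absolute constant $c$. Combining the two displays yields \eqref{GA:e23} with an appropriate $a_6$, in the case where $G$ is differentiable at $x$.

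For the general case $v_x \in \partial f(x)$, I would invoke the convex hull representation \eqref{GA:e56}: there exist nonnegative $c_1,\dots,c_t$ summing to $1$ and unit vectors $u_1,\dots,u_t$ such that each $v_i = \lim_{\delta \downarrow 0}\nabla f(x + \delta u_i)$, with $f$ differentiable at $x + \delta u_i$ for all sufficiently small $\delta$. Applying the differentiable-case bound at $x + \delta u_i$ and passing to the limit (the right-hand side of \eqref{GA:e23} is continuous in $x$) gives $\|v_i\| \leq \frac{a_6 d}{2^d}\max(\|x + \delta u_i\|,\|x_*\|) \to \frac{a_6 d}{2^d}\max(\|x\|,\|x_*\|)$. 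Then $\|v_x\| \leq \sum_i c_i \|v_i\|$ delivers the claim.

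I do not expect any serious obstacle here: the statement is a crude norm bound whose role is to make the step size $\nu \|v_x\|$ controllable in the arguments of Section~\ref{GA:s4} and Lemma~\ref{GA:le28}. The only mild subtlety is handling points of nondifferentiability, which is dealt with by the standard convex-hull/limit argument outlined above.
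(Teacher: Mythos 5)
Your proposal is correct and follows essentially the same route as the paper: split $\|v_x\| \leq \|h_x\| + \|v_x - h_x\|$, bound the second term by Lemma~\ref{GA:le29} and the noise assumption, and bound $\|h_x\|$ from its definition using $|\zeta_j|\leq 1$ and $|\sin\bar\theta_i|\leq 1$. The only difference is your final convex-hull/limit step, which is harmless but redundant, since Lemma~\ref{GA:le29} is already stated for arbitrary $v_x \in \partial f(x)$.
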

\begin{proof}
Define for convenience $\zeta_j=\prod_{i = j}^{d - 1} \frac{\pi - \bar{\theta}_{j, x, x_*}}{\pi}$.
We have
\begin{align}
\|{v}_x\|
\leq&
\|h_{x}\| + \|h_{x} - {v}_x\| \nonumber \\
\leq&
\left\|\frac{1}{2^d} x - \frac{1}{2^d} \zeta_{0} x_* - \frac{1}{2^d} \sum_{i = 0}^{d - 1} \frac{\sin \bar{\theta}_{i,x}}{\pi} \zeta_{i + 1}  \frac{\|x_*\|}{\|x\|} x \right\|
+
a_1 \frac{d^3 \sqrt{\eps}}{2^d} \max ( \|x\|, \| x_* \|)
+
\frac{2}{2^{d/2}} \|e\|
\nonumber \\
\leq&
\frac{1}{2^d} \|x\| + \left( \frac{1}{2^d}  + \frac{d}{\pi 2^d} \right) \|x_*\| + a_1 \frac{d^3 \sqrt{\epsilon}}{2^d} \max(\|x\|, \|x_*\|) + \frac{2}{2^{d/2}} \|e\| \nonumber \\
\leq&
\frac{a_6 d}{2^d} \max(\|x\|, \|x_*\|), \nonumber
\end{align}
where the second inequality follows from the definition of $h_x$ and Lemma~\ref{GA:le29}, the third inequality uses $| \zeta_j | \leq 1$,
and the last inequality uses the assumption $\|e\| \leq a_5 2^{-d/2} \|x_*\|$.
\end{proof}

Lemma~\ref{GA:le17} is used in proofs for Lemma~\ref{GA:le4}.
\begin{lemma} \label{GA:le17}
Suppose $a_i, b_i \in [0, \pi]$ for $i = 1, \ldots, k$, and $|a_i - b_i| \leq |a_j - b_j|, \forall i \geq j$. Then it holds that
\begin{equation*}
\left|\prod_{i = 1}^k \frac{\pi - a_i}{\pi} - \prod_{i = 1}^k \frac{\pi - b_i}{\pi}\right| \leq \frac{k}{\pi} |a_1 - b_1|.
\end{equation*}
\end{lemma}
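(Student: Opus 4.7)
\textbf{Proof plan for Lemma~\ref{GA:le17}.} The plan is to control the difference of the two products by a standard telescoping argument, using crucially that every factor $(\pi - a_i)/\pi$ and $(\pi - b_i)/\pi$ lies in the interval $[0,1]$ since $a_i, b_i \in [0,\pi]$. Set $\alpha_i = (\pi - a_i)/\pi$ and $\beta_i = (\pi - b_i)/\pi$, so that $|\alpha_i - \beta_i| = |a_i - b_i|/\pi$ and both sequences take values in $[0,1]$. The statement to prove becomes $\bigl|\prod_{i=1}^k \alpha_i - \prod_{i=1}^k \beta_i\bigr| \leq (k/\pi)\,|a_1 - b_1|$.

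The first step is to write the telescoping identity
\[
\prod_{i=1}^k \alpha_i - \prod_{i=1}^k \beta_i \;=\; \sum_{i=1}^k \Bigl(\prod_{j=1}^{i-1} \beta_j\Bigr)(\alpha_i - \beta_i)\Bigl(\prod_{j=i+1}^{k} \alpha_j\Bigr),
\]
which holds for any two sequences of real numbers. Taking absolute values and bounding each prefix product $\prod_{j<i}\beta_j$ and each suffix product $\prod_{j>i}\alpha_j$ by $1$ (since each factor lies in $[0,1]$), the triangle inequality yields
\[
\Bigl|\prod_{i=1}^k \alpha_i - \prod_{i=1}^k \beta_i\Bigr| \;\leq\; \sum_{i=1}^k |\alpha_i - \beta_i| \;=\; \frac{1}{\pi}\sum_{i=1}^k |a_i - b_i|.
\]

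The second step is to invoke the monotonicity hypothesis $|a_i - b_i| \leq |a_j - b_j|$ whenever $i \geq j$, applied with $j=1$, to get $|a_i - b_i| \leq |a_1 - b_1|$ for every $i \in \{1,\ldots,k\}$. Substituting this termwise bound into the sum above gives
\[
\Bigl|\prod_{i=1}^k \alpha_i - \prod_{i=1}^k \beta_i\Bigr| \;\leq\; \frac{k}{\pi}\,|a_1 - b_1|,
\]
which is exactly the claimed inequality.

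There is no real obstacle here: the only thing one must be careful about is that the hypothesis is asymmetric in $i$ and $j$, so one has to read it correctly to see that $|a_1-b_1|$ is the maximum of the differences; once that is checked, the telescoping identity together with the uniform $[0,1]$ bound on the factors finishes the argument. No probabilistic or geometric input is needed, and the proof is purely algebraic.
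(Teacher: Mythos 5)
Your proof is correct and is essentially the paper's argument: the paper proves the bound by induction on $k$, peeling off one factor at a time via the triangle inequality, which is exactly your telescoping identity unrolled, and both arguments rest on the same two observations (each factor $(\pi-a_i)/\pi,(\pi-b_i)/\pi$ lies in $[0,1]$, and the monotonicity hypothesis with $j=1$ gives $|a_i-b_i|\leq|a_1-b_1|$).
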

\begin{proof}
Prove by induction. It is easy to verify that the inequality holds if $k = 1$.
Suppose the inequality holds with $k = t - 1$. Then
\begin{align*}
\left|\prod_{i = 1}^{t} \frac{\pi - a_i}{\pi} - \prod_{i = 1}^t \frac{\pi - b_i}{\pi}\right| \leq& \left|\prod_{i = 1}^{t} \frac{\pi - a_i}{\pi} - \frac{\pi - a_t}{\pi} \prod_{i = 1}^{t-1} \frac{\pi - b_i}{\pi}\right| \\
&+ \left|\frac{\pi - a_t}{\pi} \prod_{i = 1}^{t-1} \frac{\pi - b_i}{\pi} - \prod_{i = 1}^t \frac{\pi - b_i}{\pi}\right| \\
\leq& \frac{t-1}{\pi} |a_1 - b_1| + \frac{1}{\pi} |a_t - b_t| \leq \frac{t}{\pi} |a_1 - b_1|.
\end{align*}
\end{proof}

Lemma~\ref{GA:le31} is used in proofs for Lemma~\ref{GA:le29}, Lemma~\ref{GA:le28}, and Lemma~\ref{GA:le14}.
\begin{lemma} \label{GA:le31}
Suppose the WDC and RRIC
hold with $\epsilon \leq 1 / (16 \pi d^2)^2$. Then we have
$$
\left|x^T q_x\right| \leq \frac{2}{2^{d/2}} \|e\| \|x\|,
$$
where $q_x = \left(\prod_{i = d}^1 W_{i, +, x}\right)^T A^T e$. In addition, if $x$ is differentiable at $G(x)$, then we have
\begin{equation*}
\left\|q_x\right\| \leq \frac{2}{2^{d/2}} \|e\|.
\end{equation*}
\end{lemma}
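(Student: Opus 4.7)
The plan is to prove the two inequalities separately, using essentially the same ingredients: rewrite the inner product so that the operator $\left(\prod_{i=d}^{1} W_{i,+,x}\right)^T A^T$ acts on $e$ via its adjoint, apply Cauchy–Schwarz, then bound an expression of the form $\|A \Lambda z\|$ by chaining the RRIC with the WDC-based norm control $\|\Lambda_x x\|^2 \leq \frac{1+4\epsilon d}{2^d}\|x\|^2$ from \cite[(10)]{HV17}.

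For the first inequality, I would write
\[
x^T q_x = \left\langle x,\, \Big(\prod_{i=d}^{1} W_{i,+,x}\Big)^T A^T e \right\rangle = \left\langle A\, \Lambda_x x,\, e\right\rangle = \langle A G(x),\, e\rangle,
\]
where $\Lambda_x = \prod_{i=d}^{1} W_{i,+,x}$ and I used $G(x) = \Lambda_x x$. Cauchy–Schwarz gives $|x^T q_x| \leq \|A G(x)\|\|e\|$. To bound $\|A G(x)\|$, I apply the RRIC with the choice $x_1 = x_3 = x$, $x_2 = x_4 = 0$ (using $G(0)=0$) to conclude $\|A G(x)\|^2 \leq (1+\epsilon) \|G(x)\|^2$, and then invoke \cite[(10)]{HV17} to get $\|G(x)\|^2 = \|\Lambda_x x\|^2 \leq \frac{1+4\epsilon d}{2^d}\|x\|^2$. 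Since $\epsilon \leq 1/(16\pi d^2)^2$ makes $(1+\epsilon)(1+4\epsilon d) \leq 4$, combining the two bounds yields $\|A G(x)\| \leq \frac{2}{2^{d/2}}\|x\|$, which gives the first claim.

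For the second inequality (differentiable case), I use the duality $\|q_x\| = \sup_{\|z\|=1} \langle z, q_x\rangle$. If $G$ is differentiable at $x$, then on a small neighborhood the activation pattern is constant and $G(x+\delta z) - G(x) = \Lambda_x \delta z$ for $\delta$ small. Therefore
\[
\langle z, q_x\rangle = \langle z, \Lambda_x^T A^T e\rangle = \langle A \Lambda_x z, e\rangle \leq \|A \Lambda_x z\|\|e\|.
\]
To bound $\|A \Lambda_x z\|$, I apply the RRIC with $x_1 = x_3 = x + \delta z$ and $x_2 = x_4 = x$ and divide by $\delta^2$, obtaining $\|A \Lambda_x z\|^2 \leq (1+\epsilon)\|\Lambda_x z\|^2$. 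The remaining step is an operator-norm bound $\|\Lambda_x z\|^2 \leq \frac{1+4\epsilon d}{2^d}\|z\|^2$, which follows from the WDC: the same argument establishing \cite[(10)]{HV17} applies to $\Lambda_x$ viewed as an operator (the WDC controls $\|W_{i,+,x}^T W_{i,+,x} - \frac{1}{2}I\|$ on the relevant subspaces, propagating through the product). Taking the supremum over unit $z$ then gives $\|q_x\| \leq \frac{2}{2^{d/2}}\|e\|$ under the same smallness of $\epsilon$.

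The main obstacle I anticipate is simply checking that the WDC-based bound on $\|\Lambda_x z\|$ for arbitrary $z$ — not just $z=x$ — follows from WDC with a comparable constant; one should be able to extract this from the machinery in \cite{HV17} (essentially by noting that the matrix $\Lambda_x^T \Lambda_x$ is, up to an $O(\epsilon d)$ spectral perturbation, $2^{-d} I$ on the subspace reached by $\Lambda_x$). Beyond that, the argument is just a careful application of Cauchy–Schwarz together with the RRIC and WDC, and the constant $2$ in the conclusion absorbs all the $(1+\epsilon)$ and $(1+4\epsilon d)$ factors thanks to the assumption $\epsilon \leq 1/(16\pi d^2)^2$.
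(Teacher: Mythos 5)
Your proposal is correct and follows essentially the same route as the paper's proof: for the first bound, rewrite $x^Tq_x=\langle AG(x),e\rangle$, apply Cauchy--Schwarz, the RRIC (with $x_2=x_4=0$), and the product bound $\prod_{i=d}^1\|W_{i,+,x}\|\leq (1+2\epsilon d)/2^{d/2}$ from \cite[(10)]{HV17}; for the second, use local linearity at a point of differentiability and the RRIC on difference quotients to get $\|A\Lambda_x\|\leq\sqrt{1+\epsilon}\,\prod_{i=d}^1\|W_{i,+,x}\|$. The ``obstacle'' you flag is not one: \cite[(10)]{HV17} already gives $\|W_{i,+,x}\|^2\leq \tfrac12+\epsilon$ as a genuine operator-norm bound (since the WDC with $y=x$ controls $\|W_{+,x}^TW_{+,x}-I/2\|$), so the bound on $\|\Lambda_x z\|$ for arbitrary $z$ follows by submultiplicativity exactly as the paper does it.
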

\begin{proof}
We have
\begin{align*}
|x^T q_x|^2 =& |e^T A G(x)|^2 \leq \|A G(x)\|^2 \|e\|^2 \leq (1 + \epsilon) \|G(x)\|^2 \|e\|^2 \\
\leq& (1 + \epsilon) \prod_{i = d}^1\| W_{i, +, x} \|^2 \|e\|^2 \|x\|^2 \leq (1 + \epsilon) (1 + 2 \epsilon d)^2 \frac{1}{ 2^{d}} \|e\|^2 \|x\|^2
\end{align*}
where the second inequality follows from RRIC and the last inequality follows from~\cite[(10)]{HV17}. Therefore, $\left|x^T q_x\right| \leq \frac{2}{2^{d/2}} \|e\| \|x\|$.


Suppose $G$ is differentiable at $x$. Then the local linearity of $G$ implies that $G(x + z) - G(x) = \left(\prod_{i = d}^1 W_{i, +, x}\right) z$ for any sufficiently small $z \in \mathbb{R}^k$. By the RRIC, we have
\begin{align*}
&\left| \inner[]{ A \left(\prod_{i = d}^1 W_{i, +, x}\right) z }{A \left(\prod_{i = d}^1 W_{i, +, x}\right) z} - \inner[]{ \left(\prod_{i = d}^1 W_{i, +, x}\right) z }{\left(\prod_{i = d}^1 W_{i, +, x}\right) z} \right| \\
&\leq \epsilon \prod_{i = d}^1\| W_{i, +, x} \|^2 \|z\|^2,
\end{align*}
which implies
\begin{equation*}
\left| \inner[]{ A \left(\prod_{i = d}^1 W_{i, +, x}\right) z }{A \left(\prod_{i = d}^1 W_{i, +, x}\right) z} \right| \leq (1 + \epsilon) \prod_{i = d}^1\| W_{i, +, x} \|^2 \|z\|^2.
\end{equation*}
Therefore, we obtain
\begin{equation*}
\left\| A \left(\prod_{i = d}^1 W_{i, +, x}\right) \right\| \leq \sqrt{1 + \epsilon} \prod_{i = d}^1\| W_{i, +, x} \|.
\end{equation*}
Combining above inequality with $\prod_{i = d}^1\| W_{i, +, x} \| \leq (1 + 2 \epsilon d) / 2^{d/2} \leq 1.5 / 2^{d/2}$ given in~\cite[(10)]{HV17} yields
\begin{equation*}
\left\| A \left(\prod_{i = d}^1 W_{i, +, x}\right) \right\| \leq 1.5 \sqrt{1 + \epsilon} / 2^{d/2} \leq 2 / 2^{d/2},
\end{equation*}
where the second inequality follows from the assumption on $\epsilon$.
Therefore, we obtain
\begin{equation*}
\|q_x\| = \left\| \left(\prod_{i = d}^1 W_{i, +, x}\right)^T A^T e\right\| \leq \left\| \left(\prod_{i = d}^1 W_{i, +, x}\right)^T A^T\right\| \|e\| \leq \frac{2}{2^{d/2}} \|e\|.
\end{equation*}
\end{proof}

Lemma~\ref{lemma:rho-d} is used in proofs for Lemma~\ref{GA:le14}.
\begin{lemma}\label{lemma:rho-d}
For all $d\geq 2$, that
\[
1/\left(a_7(d + 2)^2\right) \leq 1 - \rho_d \leq 250/(d + 1),
\]
\end{lemma}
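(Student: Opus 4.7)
My plan is to extract a one-step recursion for $\rho_d$ and then estimate $1-\rho_d$ via bounds on the angle sequence $\check\theta_i$. Splitting off the last term in the definition
\[
\rho_d = \sum_{i=0}^{d-1}\frac{\sin\check\theta_i}{\pi}\prod_{j=i+1}^{d-1}\frac{\pi-\check\theta_j}{\pi}
\]
yields $\rho_d = \frac{\sin\check\theta_{d-1}}{\pi} + \frac{\pi-\check\theta_{d-1}}{\pi}\rho_{d-1}$ with $\rho_1 = \sin(\pi)/\pi = 0$, which rearranges to the key identity
\[
1-\rho_d \;=\; (1-\rho_{d-1})\,\frac{\pi-\check\theta_{d-1}}{\pi} \;+\; \frac{\check\theta_{d-1}-\sin\check\theta_{d-1}}{\pi}.
\]
Both summands are non-negative, so $1-\rho_d$ is monotone decreasing and $\rho_d\in[0,1)$. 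Iterating gives the telescoping representation
\[
1-\rho_d \;=\; \sum_{k=1}^{d-1}\frac{\check\theta_k-\sin\check\theta_k}{\pi}\prod_{j=k+1}^{d-1}\frac{\pi-\check\theta_j}{\pi} \;+\; \prod_{j=1}^{d-1}\frac{\pi-\check\theta_j}{\pi},
\]
which will drive both the upper and lower estimates.

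The next step is to pin down the size of $\check\theta_i$. The map $g$ satisfies $g(\theta) < \theta$ on $(0,\pi]$, so $\check\theta_i \downarrow 0$ monotonically. Taylor expanding
\[
\cos g(\theta) \;=\; 1 \;-\; \tfrac{\theta^2}{2} \;+\; \tfrac{\theta^3}{3\pi} \;+\; O(\theta^4)
\]
gives $g(\theta) = \theta(1 - \theta/(3\pi) + O(\theta^2))$, hence $1/g(\theta) = 1/\theta + 1/(3\pi) + O(\theta)$. An induction on $i$ from the base cases $\check\theta_0=\pi$, $\check\theta_1=\pi/2$ then yields two-sided bounds of the form
\[
\frac{A_1}{i+2} \;\leq\; \check\theta_i \;\leq\; \frac{A_2}{i+2} \qquad (i\geq 0)
\]
for universal constants $A_1,A_2>0$. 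I would verify the small-$i$ range by hand and propagate the bounds inductively using the sharp expansion above.

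For the upper bound $1-\rho_d \leq 250/(d+1)$, I would use the recursion and the lower estimate on $\check\theta$ to run a direct induction. Assuming $1-\rho_{d-1} \leq 250/d$, the contraction factor satisfies $1-\check\theta_{d-1}/\pi \leq 1 - A_1/(\pi(d+1))$ and the additive term is $O(1/d^3)$ via $\check\theta-\sin\check\theta\leq \check\theta^3/6$; choosing the constant $250$ large enough closes the induction, and $d=2$ is a direct computation ($\rho_2=1/\pi$, so $1-\rho_2 < 1 \leq 250/3$). The lower bound $1-\rho_d \geq 1/(a_7(d+2)^2)$ is the more delicate part and is the main obstacle: discarding all but the last product gives only a polynomial with unfavorable exponent, so I would instead retain the summation in the telescoped identity. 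Using $\check\theta_k-\sin\check\theta_k \geq \check\theta_k^3/7 \geq c_1/(k+2)^3$ and
\[
\prod_{j=k+1}^{d-1}\frac{\pi-\check\theta_j}{\pi} \;\geq\; \exp\!\Bigl(-\tfrac{1}{\pi}\sum_{j=k+1}^{d-1}\check\theta_j - O(\sum \check\theta_j^2)\Bigr) \;\geq\; c_2\!\left(\tfrac{k+2}{d+2}\right)^{A_2/\pi},
\]
each summand is at least of order $(k+2)^{A_2/\pi - 3}/(d+2)^{A_2/\pi}$. Summing over $k\in\{1,\dots,d-1\}$ and choosing the constants so that $A_2/\pi > 3$ produces a bound of the form $c_3/(d+2)^2$. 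The hard part is not the sum itself but rather matching $A_1,A_2$ precisely enough so both the exponent on the product and the lower estimate on $\check\theta_k^3$ conspire to give exactly the $(d+2)^{-2}$ scaling; this forces the Taylor estimates in step two to be carried to sufficient order.
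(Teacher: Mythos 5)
Your proposal is correct and follows essentially the same route as the paper: the same telescoped identity for $1-\rho_d$ (which the paper quotes from [HV17, (50)] rather than re-deriving via the one-step recursion), two-sided $\Theta(1/i)$ bounds on $\check\theta_i$ (quoted there as $\pi/(i+1)\le\check\theta_i\le 3\pi/(i+3)$), the product estimates $\prod_{j=k+1}^{d-1}(1-\check\theta_j/\pi)$ sandwiched between powers of $(k+c)/(d+c)$, and the cubic bounds on $\check\theta_i-\sin\check\theta_i$, with the lower bound obtained by summing over $k$ exactly as you describe. The only cosmetic differences are that you re-derive the identity and the angle bounds instead of citing them, and you close the upper bound by induction where the paper just sums directly.
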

and $a_8 = \min_{d \geq 2} \rho_d > 0$.
\begin{proof}

It holds that
\begin{align}
&\log(1+x) \leq x &\forall x \in [-0.5, 1] \label{GA:e47} \\
&\log(1-x) \geq -2 x &\forall x \in [0, 0.75] \label{GA:e48} 
\end{align}
where $\theta_{x, y} = \angle(x, y)$.

We recall the results in \cite[(36), (37), and (50)]{HV17}:
\begin{align*}
&\check{\theta}_i \leq \frac{3 \pi}{i + 3}\;\;\;\;\;  \;\;\; \hbox{ and }\;\;\;\;\;  \;\;\; \check{\theta}_i \geq \frac{\pi}{i + 1}\;\;\;\;\; \forall i \geq 0 \\
&1 - \rho_d = \prod_{i = 1}^{d - 1} \left( 1 - \frac{\check{\theta}_{i}}{\pi} \right) + \sum_{i = 1}^{d-1} \frac{\check{\theta}_{i} - \sin \check{\theta}_{i}}{\pi} \prod_{j = i+1}^{d-1} \left( 1 - \frac{\check{\theta}_{j}}{\pi} \right).
\end{align*}
Therefore, we have for all $0 \leq i \leq d - 2$,
\begin{align*}
\prod_{j = i+1}^{d-1} \left( 1 - \frac{\check{\theta}_{j}}{\pi} \right) \leq& \prod_{j = i+1}^{d-1} \left( 1 - \frac{1}{j + 1} \right) = e^{\sum_{j = i + 1}^{d - 1} \log\left(1 - \frac{1}{j + 1}\right)} \\
&\leq e^{- \sum_{j = i + 1}^{d - 1}  \frac{1}{j + 1}} \leq e^{- \int_{i + 1}^d \frac{1}{s + 1} d s} = \frac{i + 2}{d + 1}, \\
\prod_{j = i+1}^{d-1} \left( 1 - \frac{\check{\theta}_{j}}{\pi} \right) \geq& \prod_{j = i+1}^{d-1} \left( 1 - \frac{3}{j + 3} \right) = e^{\sum_{j = i + 1}^{d - 1} \log\left(1 - \frac{3}{j + 3}\right)} \\
&\geq e^{- \sum_{j = i + 1}^{d - 1}  \frac{6}{j + 3}} \geq e^{- \int_{i}^{d - 1} \frac{6}{s + 3} d s} = \left(\frac{i + 3}{d + 2}\right)^6,
\end{align*}
where the second and the fifth inequalities follow from~\eqref{GA:e47} and~\eqref{GA:e48} respectively.
Since $\pi^3 / (12 (i + 1)^3) \leq \check{\theta}_{i}^3 / 12 \leq \check{\theta}_{i} - \sin \check{\theta}_{i} \leq \check{\theta}_{i}^3 / 6 \leq 27 \pi^3 / (6 (i + 3)^3)$, we have that for all $d \geq 3$
\begin{align*}
1 - \rho_d \leq& \frac{2}{d + 1} + \sum_{i = 1}^{d - 1} \frac{27 \pi^3}{6 (i + 3)^3} \frac{i + 2}{d + 1} \leq \frac{2}{d + 1} + \frac{3 \pi^5}{4 (d + 1)} \leq \frac{250}{d + 1}
\end{align*}
and
\begin{align*}
1 - \rho_d \geq& \left(\frac{3}{(d+2)}\right)^6 + \sum_{i = 1}^{d - 1} \frac{\pi^3}{12 (i + 3)^3} \left( \frac{i + 3}{d + 2} \right)^6 \geq \frac{1}{K_1 (d + 2)^2},
\end{align*}
where we use $\sum_{i = 4}^\infty \frac{1}{i^2} \leq \frac{\pi^2}{6}$ and $\sum_{i = 1}^n i^3 = O(n^4)$.
Since $\rho_d \geq 1 - 250 / (d+1)$ and $\rho_d > 0$  for all $d \geq 2$, we have $\min_{d \geq 2} \rho_d > 0$.
\end{proof}

Lemma~\ref{GA:le13} is used in proofs for Lemma~\ref{GA:le14}.

\begin{lemma} \label{GA:le13}
Fix $0 < a_9 < \frac{1}{4 d^2 \pi}$. For any $\phi_d \in [\rho_d, 1]$, it holds that
\begin{align*}
{f^E}(x) <& \frac{1}{2^{d+1}} \left( \phi_d^2 - 2 \phi_d + \frac{10}{a_8^3} d a_9 \right) \|x_*\|^2 + \frac{\|x_*\|^2}{2^{d+1}}, \forall x \in \mathcal{B}(\phi_d x_*, a_9  \|x_*\|) \hbox{ and } \\
{f^E}(x) >& \frac{1}{2^{d+1}} \left(\phi_d^2 - 2 \phi_d \rho_d - 10 d^3 a_9 \right) \|x_*\|^2 + \frac{\|x_*\|^2}{2^{d+1}}, \forall x \in \mathcal{B}(- \phi_d x_*, a_9  \|x_*\|),
\end{align*}
where $a_8$ is defined in Lemma~\ref{lemma:rho-d}.
\end{lemma}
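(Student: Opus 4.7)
}
The plan is to prove the two inequalities by evaluating $f^E$ exactly at the centers $\pm \phi_d x_*$ and then controlling the perturbation for $x$ inside the ball of radius $a_9\|x_*\|$ around each center. The exact evaluations go as follows. At $x = \phi_d x_*$, every angle $\bar{\theta}_{i,x,x_*}$ vanishes (since $g(0)=0$), the product $\zeta_{0,x}$ equals $1$, and every $\sin\bar{\theta}_{i,x,x_*}$ is zero, so $\tilde{h}_{\phi_d x_*, x_*} = \frac{1}{2^d}x_*$ and $f^E(\phi_d x_*) = \frac{1}{2^{d+1}}(\phi_d^2 - 2\phi_d + 1)\|x_*\|^2$. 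At $x = -\phi_d x_*$, we have $\bar{\theta}_{0,x,x_*} = \pi$, $\bar{\theta}_{i,x,x_*} = \check{\theta}_i$, and $\hat{x} = -\hat{x}_*$, so the first term in $\tilde{h}$ vanishes (factor $(\pi-\pi)/\pi$) and the sum reduces exactly to $-\frac{\rho_d}{2^d}x_*$; this yields $f^E(-\phi_d x_*) = \frac{1}{2^{d+1}}(\phi_d^2 - 2\phi_d\rho_d + 1)\|x_*\|^2$. Both expressions match the claimed bounds when $a_9=0$.

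The next step is to control $|f^E(x) - f^E(\text{center})|$ for $x$ in the respective ball. I would decompose
\[
f^E(x) - f^E(c) = \frac{1}{2^{d+1}}(\|x\|^2 - \|c\|^2) - \bigl(x^T \tilde{h}_{x,x_*} - c^T \tilde{h}_{c,x_*}\bigr),
\]
with $c = \pm \phi_d x_*$. The quadratic piece is handled directly: $|\|x\|^2 - \|c\|^2| \leq (2\phi_d + a_9)a_9\|x_*\|^2$, contributing at most an $O(a_9\|x_*\|^2/2^{d+1})$ error since $\phi_d \leq 1$. The inner-product piece is written as
\[
(x-c)^T \tilde{h}_{x,x_*} \;+\; c^T\bigl(\tilde{h}_{x,x_*} - \tilde{h}_{c,x_*}\bigr),
\]
where the first summand is bounded using the elementary estimate $\|\tilde{h}_{x,x_*}\| \leq \frac{1}{2^d}(1 + d/\pi)\|x_*\|$ (obtained from $|\zeta_j|\leq 1$), and the second summand is bounded by a Lipschitz-type estimate for $\tilde{h}$ in the spirit of Lemma~\ref{GA:le4}.

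For the upper bound at $c = \phi_d x_*$, the key observation is that $\phi_d \in [\rho_d, 1] \subset [a_8, 1]$ by definition of $a_8$, so the angle $\bar{\theta}_{0, x, x_*}$ is controlled by $\|x - \phi_d x_*\|/(\phi_d\|x_*\|) \leq a_9/a_8$, and by the monotonicity of $g$ on $[0,\pi]$ all subsequent angles remain small. Expanding $x^T\tilde{h}_{x,x_*}$ and using Lemma~\ref{GA:le17} and the bound $|a - b| \cdot |\sin(\cdot)| \leq |a-b|$, each of the $d$ terms in $\tilde{h}$ contributes an error $O(a_9/a_8)/2^d$, and tracking the factors $1/\phi_d \leq 1/a_8$ appearing through (i) the angle distortion, (ii) the deviation $\|\hat{x} - \hat{x}_*\|$, and (iii) the expansion of $c^T(\tilde{h}_{x,x_*} - \tilde{h}_{c,x_*})$ yields the stated coefficient $\tfrac{10}{a_8^3} d$. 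For the lower bound at $c = -\phi_d x_*$, the angles are close to $\check{\theta}_i$, which are bounded away from $0$, so no factor $1/a_8$ appears, but the Lipschitz constants from Lemma~\ref{GA:le4}-type manipulations scale like $d^2$ per summand, and aggregating the $d$ summands produces the coefficient $10 d^3$.

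The main obstacle is bookkeeping: obtaining the sharp constants, in particular the precise power $a_8^{-3}$ in the first inequality and $d^3$ in the second, requires carefully combining the three sources of error (quadratic piece, $(x-c)^T\tilde{h}_{x,x_*}$ piece, and $c^T(\tilde{h}_{x,x_*} - \tilde{h}_{c,x_*})$ piece) and using Lemma~\ref{GA:le17} together with the Lipschitz estimate $|\bar{\theta}_{0, x, x_*} - \bar{\theta}_{0, c, x_*}| \leq \tfrac{4}{\phi_d\|x_*\|}\|x - c\|$ and the contractivity $|\bar{\theta}_{i, x, x_*} - \bar{\theta}_{i, c, x_*}| \leq |\bar{\theta}_{0, x, x_*} - \bar{\theta}_{0, c, x_*}|$ coming from $g'\in[0,1]$. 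Once these pieces are assembled the claimed bounds follow by a routine sum of at most $O(d)$ angular error contributions of size $O(a_9)$ each, each multiplied by at most $1/\phi_d^3 \leq 1/a_8^3$ in the first case and at most a $d^2$ bookkeeping factor in the second.
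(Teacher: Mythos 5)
Your proposal is correct in outline but takes a genuinely different route from the paper. You evaluate $f^E$ exactly at the centers $\pm\phi_d x_*$ (both center values are computed correctly: all angles vanish at $\phi_d x_*$, and at $-\phi_d x_*$ the angles equal $\check\theta_i$ so $\tilde h_{-\phi_d x_*,x_*}=-\frac{\rho_d}{2^d}x_*$) and then bound the perturbation $|f^E(x)-f^E(c)|$ over the ball by a Lipschitz-type argument on $\tilde h$. The paper never computes the center values: it bounds $f^E(x)$ one-sidedly by discarding the sign-favorable half of $x^T\tilde h_{x,x_*}$ in each regime --- near $\phi_d x_*$ the sum $\frac{1}{2^d}\sum_i\frac{\sin\bar\theta_{i,x,x_*}}{\pi}\zeta_{i+1,x}\|x_*\|\|x\|$ is nonnegative and is simply dropped from the upper bound, while near $-\phi_d x_*$ the product term $\frac{1}{2^d}\zeta_{0,x}\,x^Tx_*$ is nonpositive and is dropped from the lower bound, after which the remaining sum is controlled by the estimate $\sum_i\frac{\sin\bar\theta_{i,x,x_*}}{\pi}\prod_j\frac{\pi-\bar\theta_{j,x,x_*}}{\pi}\le\rho_d+\frac{3}{2}d^3a_9\pi^2$ imported from \cite[(41)]{HV17}. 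Your route buys symmetry and transparency (the bounds visibly reduce to the exact center values as $a_9\to 0$) at the cost of heavier bookkeeping, and there is one point you must be careful about: a direct application of the generic Lipschitz constant $\frac{6d+4d^2}{\pi 2^d}\max(1/\|x\|,1/\|c\|)\|x_*\|$ from Lemma~\ref{GA:le4} to the term $c^T(\tilde h_{x,x_*}-\tilde h_{c,x_*})$ produces an error of order $d^2 a_9\|x_*\|^2/2^d$, which for large $d$ exceeds the claimed coefficient $\frac{10}{a_8^3}d\,a_9$ in the first inequality. Your refined per-term accounting in the third paragraph --- exploiting that all reference angles $\bar\theta_{i,c,x_*}$ vanish at $c=\phi_dx_*$, so each sine term is bounded by $\sin\bar\theta_{i,x,x_*}\le\bar\theta_{0,x,x_*}\le 4a_9/\phi_d$ with no extra factor of $d$ coming from differences of the products $\zeta_{i+1,\cdot}$ --- is therefore essential, not optional; with it the upper-bound argument closes, and the analogous perturbation of the sum around $\rho_d$ near $-\phi_dx_*$ gives an $O(d^2a_9)$ error that fits comfortably inside the claimed $10d^3a_9$.
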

\begin{proof}
If $x \in \mathcal{B}(\phi_d x_*, a_9 \|x_*\|)$, then we have
$0 \leq \bar{\theta}_{0, x, x_*} \leq \arcsin(a_9 / \phi_d) \leq \frac{\pi a_9}{2 \phi_d}$,
$0 \leq \bar{\theta}_{0, x, x_*} \leq \bar{\theta}_{i, x, x_*} \leq \frac{\pi a_9}{2 \phi_d}$, and
$\phi_d \|x_*\| - a_9 \|x_*\| \leq \|x\| \leq \phi_d \|x_*\| + a_9 \|x_*\|$.
Note that $\cos \theta \geq 1 - \frac{\theta^2}{2}, \forall \theta \in [0, \pi]$.
We have
\begin{align*}
&{f^E}(x) - \frac{\|x_*\|^2}{2^{d+1}} \leq \frac{1}{2^{d+1}} \|x\|^2 - \frac{1}{2^d} \left( \prod_{i = 0}^{d - 1} \frac{\pi - \bar{\theta}_{i, x, x_*}}{\pi} \right) x_*^T x \\
\leq& \frac{1}{2^{d+1}} (\phi_d + a_9)^2 \|x_*\|^2 - \frac{1}{2^d} \left( \prod_{i = 0}^{d - 1} \frac{\pi - \frac{\pi a_9}{2 \phi_d}}{\pi} \right) \|x_*\| \|x\| \cos \bar{\theta}_{0, x, x_*} \\
\leq& \frac{1}{2^{d+1}} (\phi_d + a_9)^2 \|x_*\|^2 - \frac{1}{2^d} \left( \prod_{i = 0}^{d - 1} \frac{\pi - \frac{\pi a_9}{2 \phi_d}}{\pi} \right) (\phi_d - a_9) \|x_*\|^2 \left( 1 - \frac{\pi^2 a_9^2}{8 \phi_d^2} \right)\\
\leq& \frac{1}{2^{d+1}} \left( \phi_d^2 + 2 \phi_d a_9 + a_9^2 - 2 \left(1 - \frac{d a_9}{\phi_d}\right) (\phi_d -a_9) \left(1 - \frac{\pi^2 a_9^2}{8 \phi_d^2}\right) \right) \|x_*\|^2 \\
\leq& \frac{1}{2^{d+1}} \left( \phi_d^2 - 2 \phi_d + \frac{10}{a_8^3} d a_9 \right) \|x_*\|^2,
\end{align*}
where the last inequality is by Lemma~\ref{lemma:rho-d} and $a_9 < 1 / (4 \pi)$.

If $x \in \mathcal{B}(- \phi_d x_*, a_9 \|x_*\|)$, then we have
$0 \leq \pi - \bar\theta_{0, x, x_*} \leq \arcsin(a_9 \pi) \leq \frac{\pi^2}{2} a_9$, and
$\phi_d \|x_*\| - a_9 \|x_*\| \leq \|x\| \leq \phi_d \|x_*\| + a_9 \|x_*\|$.
It follows that
\begin{align*}
{f^E}(x) - \frac{\|x_*\|^2}{2^{d+1}} \geq& \frac{1}{2^{d+1}} \|x\|^2 - \frac{1}{2^d} \sum_{i = 0}^{d - 1} \frac{\sin \bar{\theta}_{i, x, x_*}}{\pi} \left( \prod_{j = i + 1}^{d-1} \frac{\pi - \bar{\theta}_{j, x, x_*}}{\pi} \right) \|x_*\|\|x\| \\
\geq& \frac{1}{2^{d+1}} \|x\|^2 - \frac{1}{2^d} \left( \rho_d + \frac{3 d^3a_9 \pi^2}{2} \right) \|x_*\|\|x\| \;\;\;\;\; \hbox{ (by~\cite[(41)]{HV17})} \\
\geq& \frac{1}{2^{d+1}} \left( \phi_d - a_9 \right)^2 \|x_*\|^2 - \frac{1}{2^d} \left( \rho_d + \frac{3 d^3a_9 \pi^2}{2} \right) \left(\phi_d + a_9\right)\|x_*\|^2 \\
\geq& \frac{1}{2^{d+1}} \left(\phi_d^2 - 2 \phi_d \rho_d - 10 d^3 a_9 \right) \|x_*\|^2.
\end{align*}
\end{proof}

Lemma~\ref{GA:le15} is used in proofs for Lemma~\ref{GA:le14}.

\begin{lemma} \label{GA:le15}
If
the WDC and RRIC
hold with $\epsilon < 1 / (16 \pi d^2)^2$, then we have
\begin{equation*}
|f(x) - {f^E}(x)| \leq \frac{\epsilon (1 + 4 \epsilon d)}{2^d} \|x\|^2 + \frac{\epsilon(1 + 4 \epsilon d) + 48 d^3 \sqrt{\epsilon}}{2^{d+1}} \|x\| \|x_*\| + \frac{\epsilon (1 + 4 \epsilon d)}{2^d} \|x_*\|^2.
\end{equation*}
\end{lemma}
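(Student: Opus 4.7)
The plan is to decompose the error via the triangle inequality and separately control the deviation coming from the measurement matrix $A$ and the deviation coming from the random network $G$. Concretely, I would write
\[
|f(x) - {f^E}(x)| \;\leq\; T_A + T_W,
\]
where $T_A := \tfrac{1}{2}\bigl|\,\|AG(x) - AG(x_*)\|^2 - \|G(x) - G(x_*)\|^2\,\bigr|$ isolates the failure of $A$ to be an isometry on the signal difference, and $T_W := \bigl|\,\tfrac{1}{2}\|G(x) - G(x_*)\|^2 - {f^E}(x)\,\bigr|$ isolates the deviation of $G$ from its idealization.

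The term $T_A$ is handled by a direct application of the RRIC with $x_1 = x_3 = x$ and $x_2 = x_4 = x_*$, giving $T_A \leq \tfrac{\epsilon}{2}\|G(x) - G(x_*)\|^2$. Combined with the norm estimate $\|G(z)\| \leq (1+2\epsilon d)\|z\|/2^{d/2}$ from \cite[(10)]{HV17} and $\|G(x)-G(x_*)\|^2 \leq (\|G(x)\| + \|G(x_*)\|)^2$, this yields
\[
T_A \leq \tfrac{\epsilon(1+2\epsilon d)^2}{2^{d+1}}\bigl(\|x\|^2 + 2\|x\|\|x_*\| + \|x_*\|^2\bigr).
\]
Under $\epsilon < 1/(16\pi d^2)^2$ the factor $(1+2\epsilon d)^2$ is essentially $1+4\epsilon d$ up to a negligible correction, so $T_A$ contributes appropriately sized pieces to each of the three target coefficients.

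For $T_W$, I would expand $\tfrac{1}{2}\|G(x)-G(x_*)\|^2 - {f^E}(x)$ into the diagonal pieces $\tfrac{1}{2}\bigl(\|G(x)\|^2 - \|x\|^2/2^d\bigr)$ and $\tfrac{1}{2}\bigl(\|G(x_*)\|^2 - \|x_*\|^2/2^d\bigr)$, plus a cross-term $-\bigl(\langle G(x), G(x_*)\rangle - x^T\tilde{h}_{x,x_*}\bigr)$. The two diagonal terms are bounded by iterating the WDC with $x=y=z$, i.e.\ using $\|W_{i,+,z}^T W_{i,+,z} - \tfrac{1}{2} I\| \leq \epsilon$ layer by layer and telescoping; this is the same telescoping packaged in \cite[(10)]{HV17} and yields $\bigl|\|G(z)\|^2 - \|z\|^2/2^d\bigr| \leq \tfrac{\epsilon(1+4\epsilon d)}{2^d}\|z\|^2$, which accounts for the $\|x\|^2$ and $\|x_*\|^2$ pieces of the target.

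The cross-term is the principal obstacle and is the source of the $d^3\sqrt{\epsilon}$ coefficient on $\|x\|\|x_*\|$. Here I would invoke the WDC-based operator-norm approximation proved in \cite{HV17}, which controls $\bigl\|(\prod_i W_{i,+,x})^T(\prod_i W_{i,+,x_*}) - Q\bigr\| \leq c\,d^3\sqrt{\epsilon}/2^d$, where $Q$ is the deterministic matrix whose right action on $x_*$ is exactly $\tilde{h}_{x,x_*}$; this is precisely the estimate driving the proof of Lemma~\ref{GA:le29}. Testing this inequality between $x$ and $x_*$ yields $|\langle G(x),G(x_*)\rangle - x^T\tilde{h}_{x,x_*}| \leq \tfrac{c\,d^3\sqrt{\epsilon}}{2^d}\|x\|\|x_*\|$. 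The $\sqrt{\epsilon}$ (rather than $\epsilon$) rate is unavoidable because the angles $\bar{\theta}_{i,x,x_*}$ propagate through $d$ layers of the map $g$ defined in Section~\ref{GA:s5}, and the layerwise WDC error of order $\epsilon$ inflates to $O(d^3\sqrt{\epsilon})$ when fed through the inverse of $g$ near the endpoints of $[0,\pi]$. Adding the diagonal and cross contributions of $T_W$ to $T_A$, grouping by $\|x\|^2$, $\|x\|\|x_*\|$, $\|x_*\|^2$, and absorbing higher-order powers of $\epsilon d$ under the assumption $\epsilon < 1/(16\pi d^2)^2$ recovers the stated bound.
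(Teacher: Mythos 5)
Your proposal follows essentially the same route as the paper's proof: expand $f - f^E$ into RRIC-error terms and WDC-error terms, bound the diagonal WDC pieces by telescoping $\|\Lambda_z^T\Lambda_z - I/2^d\|$ via \cite[(10)]{HV17}, and bound the cross term by the operator-norm approximation of $\Lambda_x^T\Lambda_{x_*}$ (\cite[Lemma~6]{HV17}), which is exactly where the $d^3\sqrt{\epsilon}$ coefficient comes from. The only cosmetic difference is that you apply the RRIC once to the full difference $G(x)-G(x_*)$ rather than separately to the three quadratic and cross pieces as the paper does, which loosens the $\epsilon\|x\|\|x_*\|$ contribution by roughly a factor of two — harmless, since that piece is dominated by the $d^3\sqrt{\epsilon}$ term under the stated assumption on $\epsilon$.
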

\begin{proof}
For brevity of notation, let $\Lambda_{z} = \prod_{i = d}^1 W_{i, +, z}$.
We have
\begin{align*}
\left|f(x) - {f^E}(x) \right| =& \left|\frac{1}{2} x^T \left( \Lambda_{x}^T A^T A \Lambda_{x} - \Lambda_{x}^T \Lambda_{x} \right) x \right. + \frac{1}{2} x^T \left( \Lambda_{x}^T \Lambda_{x} - \frac{I_k}{2^d} \right) x \\
& - x^T \left( \Lambda_{x}^T A^T A \Lambda_{x_*} x_* -  \Lambda_{x}^T \Lambda_{x_*} x_* \right) - x^T \left( \Lambda_{x}^T \Lambda_{x_*} x_* - h_{x, x_*} \right) \\
&+ \frac{1}{2} x_*^T \left(\Lambda_{x_*}^T A^T A \Lambda_{x_*} - \Lambda_{x_*}^T \Lambda_{x_*} \right) x_* + \left. \frac{1}{2} x_*^T \left(\Lambda_{x_*}^T \Lambda_{x_*} - \frac{I_k}{2^d} \right) x_* \right| \\
&\leq \frac{\epsilon}{2} \frac{1 + 4 \epsilon d}{2^d} \|x\|^2 + \frac{\epsilon}{2} \frac{1 + 4 \epsilon d}{2^d} \|x\|^2 +  \frac{\epsilon}{2} \frac{1 + 4 \epsilon d}{2^d} \|x\| \|x_*\| + \frac{24 d^3 \sqrt{\epsilon}}{2^d} \|x\| \|x_*\| \\
&+ \frac{\epsilon}{2} \frac{1 + 4 \epsilon d}{2^d} \|x_*\|^2 + \frac{\epsilon}{2} \frac{1 + 4 \epsilon d}{2^d} \|x_*\|^2\\
=& \frac{\epsilon (1 + 4 \epsilon d)}{2^d} \|x\|^2 + \frac{\epsilon(1 + 4 \epsilon d) + 48 d^3 \sqrt{\epsilon}}{2^{d+1}} \|x\| \|x_*\| + \frac{\epsilon (1 + 4 \epsilon d)}{2^d} \|x_*\|^2,
\end{align*}
where the first inequality uses the WDC, the RRIC, and~\cite[Lemma~6]{HV17}.
\end{proof}

Lemma~\ref{GA:le20} is used in proofs for Lemma~\ref{GA:le19}.

\begin{lemma} \label{GA:le20}
Suppose $W \in \mathbb{R}^{n \times k}$ satisfies the WDC with constant $\epsilon$. Then for any $x, y \in \mathbb{R}^k$, it holds that
\begin{equation*}
\|W_{+, x} x - W_{+, y} y\| \leq \left( \sqrt{\frac{1}{2} + \epsilon} + \sqrt{2(2\epsilon + \theta)} \right) \|x - y\|,
\end{equation*}
where $\theta = \angle(x, y)$.
\end{lemma}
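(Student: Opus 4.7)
The plan is to write $W_{+,x}x - W_{+,y}y = W_{+,x}(x-y) + (W_{+,x} - W_{+,y})y$ and bound each piece separately by the WDC, then apply the triangle inequality. The first piece is straightforward: since $W_{+,x}^T W_{+,x} = \sum_i 1_{w_i \cdot x > 0} w_i w_i^T$, the WDC applied with equal arguments (where $Q_{x,x} = \tfrac{1}{2}I$) yields $\|W_{+,x}^T W_{+,x}\| \leq \tfrac{1}{2} + \epsilon$, and hence $\|W_{+,x}(x-y)\| \leq \sqrt{\tfrac{1}{2}+\epsilon}\,\|x-y\|$.

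The interesting piece is the ReLU sign-flip term. Let $D_x = \diag(1_{w_i \cdot x > 0})$ so that $W_{+,x} = D_x W$, and let $S = \{i : \sign(w_i^T x) \neq \sign(w_i^T y)\}$. Then $(D_x - D_y)^2$ is the diagonal matrix with ones on $S$ and zeros elsewhere, so
\[
\|(W_{+,x} - W_{+,y})y\|^2 = y^T W^T (D_x - D_y)^2 W y = \sum_{i \in S}(w_i^T y)^2.
\]
For $i \in S$, the scalars $w_i^T x$ and $w_i^T y$ have opposite signs, so $|w_i^T y| \leq |w_i^T(y-x)|$. This lets me replace $y$ by $y-x$ in the preceding sum and rewrite it as a quadratic form:
\[
\|(W_{+,x}-W_{+,y})y\|^2 \leq (y-x)^T \Bigl(\sum_{i \in S} w_i w_i^T\Bigr)(y-x).
\]

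Next I estimate the spectral norm of $\sum_{i \in S} w_i w_i^T$ via the identity $1_{i\in S} = 1_{w_i\cdot x>0} + 1_{w_i\cdot y>0} - 2\cdot 1_{w_i\cdot x>0}1_{w_i\cdot y>0}$. Applying the WDC three times (the first two instances contribute $\tfrac{1}{2}I$ each, with error $\epsilon$ apiece, and the third contributes $2Q_{x,y}$ with error $2\epsilon$) gives
\[
\Bigl\|\sum_{i\in S} w_i w_i^T - (I - 2Q_{x,y})\Bigr\| \leq 4\epsilon.
\]
A direct computation yields $I - 2Q_{x,y} = \tfrac{\theta}{\pi} I - \tfrac{\sin\theta}{\pi} M_{\hat x\leftrightarrow\hat y}$; since $M_{\hat x\leftrightarrow\hat y}$ has eigenvalues in $\{-1,0,+1\}$, the largest eigenvalue of this matrix equals $\tfrac{\theta+\sin\theta}{\pi} \leq \tfrac{2\theta}{\pi} \leq 2\theta$. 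Therefore $\|\sum_{i\in S} w_i w_i^T\| \leq 4\epsilon + 2\theta \leq 2(2\epsilon + \theta)$, which gives $\|(W_{+,x}-W_{+,y})y\| \leq \sqrt{2(2\epsilon+\theta)}\,\|x-y\|$. Combining the two bounds via the triangle inequality yields the stated estimate.

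There is no real obstacle here; the argument is essentially mechanical once the split is made. The only minor subtlety is the observation that $|w_i^T y| \leq |w_i^T(y-x)|$ on the sign-disagreement set $S$, which lets us convert an estimate on $y$ into the Lipschitz-type estimate on $x-y$ that we need. The WDC expansion then makes the $\theta/\pi$ factor explicit, reflecting the expected proportion of hyperplanes $\{w : w \cdot x > 0\}$ that separate $x$ from $y$.
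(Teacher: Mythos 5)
Your proof is correct and follows essentially the same route as the paper: the same decomposition $W_{+,x}x - W_{+,y}y = W_{+,x}(x-y) + (W_{+,x}-W_{+,y})y$, the same key observation that $|w_i\cdot y|\le |w_i\cdot(x-y)|$ on the sign-disagreement set, and the same use of the WDC to bound the resulting quadratic form by $2(2\epsilon+\theta)\|x-y\|^2$. The only (cosmetic) difference is in the last step, where the paper splits the disagreement sum into the two cross terms $(x-y)^T W_{+,x}^T(W_{+,x}-W_{+,y})(x-y)$ and $(x-y)^T W_{+,y}^T(W_{+,y}-W_{+,x})(x-y)$, each bounded by $(2\epsilon+\theta)\|x-y\|^2$, whereas you use the symmetric-difference identity and compute $\|I-2Q_{x,y}\|$ directly.
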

\begin{proof}
We have
\begin{align}
&\|W_{+, x} x - W_{+, y} y\| \leq \|W_{+, x} x - W_{+, x} y\| + \|W_{+, x} y - W_{+, y} y\| \nonumber \\
=& \|W_{+, x} (x - y)\| + \|(W_{+, x} - W_{+, y}) y\| \leq \|W_{+, x}\| \|x - y\| + \|(W_{+, x} - W_{+, y}) y\|. \label{GA:e33}
\end{align}
By WDC assumption, we have
\begin{align}
\|W_{+, x}^T (W_{+, x} - W_{+, y})\| \leq& \left\|W_{+, x}^T W_{+, x} - I  / 2\right\| + \left\|W_{+, x}^T W_{+, y} - Q_{x, y}\right\| + \left\| Q_{x, y} -  I / 2 \right\| \nonumber \\
\leq& 2 \epsilon + \theta. \label{GA:e34}
\end{align}
We also have
\begin{align}
&\|(W_{+, x} - W_{+, y}) y\|^2 = \sum_{i = 1}^n (1_{w_i \cdot x > 0} - 1_{w_i \cdot y > 0})^2 (w_i \cdot y)^2 \nonumber \\
\leq& \sum_{i = 1}^n (1_{w_i \cdot x > 0} - 1_{w_i \cdot y > 0})^2 ((w_i \cdot x)^2 + (w_i \cdot y)^2 - 2 (w_i \cdot x) (w_i \cdot y) ) \nonumber \\
=& \sum_{i = 1}^n (1_{w_i \cdot x > 0} - 1_{w_i \cdot y > 0})^2 (w_i \cdot (x - y) )^2 \nonumber \\
=& \sum_{i = 1}^n 1_{w_i \cdot x > 0} 1_{w_i \cdot y \leq 0} (w_i \cdot (x - y))^2 + \sum_{i = 1}^n 1_{w_i \cdot x \leq 0} 1_{w_i \cdot y > 0} (w_i \cdot(x - y))^2 \nonumber \\
=& (x - y)^T W_{+, x}^T (W_{+, x} - W_{+, y}) (x - y) + (x - y)^T W_{+, y}^T (W_{+, y} - W_{+, x}) (x - y) \nonumber \\
\leq& 2 (2 \epsilon + \theta) \|x - y\|^2. \;\;\; \hbox{ (by~\eqref{GA:e34})} \label{GA:e35}
\end{align}
Combining~\eqref{GA:e33},~\eqref{GA:e35}, and $\|W_{i, +, x}\|^2 \leq 1/2 + \epsilon$ given in~\cite[(10)]{HV17} yields the result.
\end{proof}

Lemma~\ref{GA:le19} is used in proofs for Lemma~\ref{GA:le10} and Lemma~\ref{GA:le8}.

\begin{lemma} \label{GA:le19}
Suppose $x \in \mathcal{B}(x_*, d \sqrt{\epsilon} \|x_*\|)$, and
the WDC
holds with $\epsilon < 1/ (200)^4 / d^6$. Then it holds that
\begin{equation*}
\left\|\prod_{i = j}^1 W_{i, +, x} x - \prod_{i = j}^1 W_{i, +, x_*} x_*\right\| \leq \frac{1.2}{2^{\frac{j}{2}}} \|x - x_*\|.
\end{equation*}
\end{lemma}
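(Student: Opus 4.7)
The plan is to induct on $j$, applying Lemma~\ref{GA:le20} at each layer. Let $u_j = \prod_{i=j}^1 W_{i,+,x}\, x$ and $v_j = \prod_{i=j}^1 W_{i,+,x_*}\, x_*$, with $u_0 = x$ and $v_0 = x_*$. By the definition of $W_{j,+,x}$, the vector $W_{j,+,x} u_{j-1}$ equals $\diag(W_j u_{j-1} > 0)\, W_j\, u_{j-1}$, which is exactly $(W_j)_{+, u_{j-1}}\, u_{j-1}$ in the notation of Lemma~\ref{GA:le20}, and similarly for $v_{j-1}$. Hence $u_j - v_j$ has precisely the form to which Lemma~\ref{GA:le20} (applied with the matrix $W_j$) gives a Lipschitz-type bound, with the relevant angle being $\theta_{j-1} = \angle(u_{j-1}, v_{j-1})$.

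For the inductive step, suppose $\|u_{j-1} - v_{j-1}\| \leq \tfrac{1.2}{2^{(j-1)/2}}\|x - x_*\|$. Two auxiliary facts are needed. First, the layerwise norm bound (e.g.\ (10) of~\cite{HV17}) gives $\min(\|u_{j-1}\|, \|v_{j-1}\|) \geq (1 - 2\epsilon d)\, 2^{-(j-1)/2} \min(\|x\|,\|x_*\|)$, and the assumption $\|x - x_*\| \leq d\sqrt\epsilon \|x_*\|$ together with the smallness of $\epsilon$ yields $\min(\|x\|,\|x_*\|) \geq \|x_*\|/2$. Combined with the inductive hypothesis, these produce $\sin\theta_{j-1} \leq 2\|u_{j-1}-v_{j-1}\|/\min(\|u_{j-1}\|,\|v_{j-1}\|) \leq C\, d\sqrt\epsilon$ and consequently $\theta_{j-1} \leq C'\, d\sqrt\epsilon$ for absolute constants $C, C'$.

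Applying Lemma~\ref{GA:le20} then gives $\|u_j - v_j\| \leq \bigl(\sqrt{1/2 + \epsilon} + \sqrt{2(2\epsilon + \theta_{j-1})}\bigr)\|u_{j-1} - v_{j-1}\|$. Unrolling the recursion and multiplying through by $2^{j/2}$, the claim reduces to the scalar inequality $\prod_{i=0}^{j-1}\bigl(\sqrt{1 + 2\epsilon} + 2\sqrt{2\epsilon + \theta_i}\bigr) \leq 1.2$. Each factor is at most $1 + \epsilon + 2\sqrt{2\epsilon} + 2\sqrt{\theta_i} \leq 1 + O(\sqrt{d}\,\epsilon^{1/4})$, so the product over $j \leq d$ layers is at most $\exp(O(d^{3/2}\epsilon^{1/4}))$. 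The hypothesis $\epsilon < (200)^{-4} d^{-6}$ makes $d^{3/2}\epsilon^{1/4} < 1/200$, which keeps the exponent well below $\log(1.2) \approx 0.18$, closing the induction.

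The main technical care is in closing the induction self-consistently: the angle bound $\theta_{j-1} \leq O(d\sqrt\epsilon)$ used in the inductive step is itself derived from the inductive hypothesis, so one must verify that the product estimate leaves enough slack to reproduce the constant $1.2$ at the next step. This slack is provided precisely by the quartic-root scaling: the factor $\epsilon^{1/4}$ arises from $\sqrt{\theta_i} = O(\sqrt{d}\,\epsilon^{1/4})$, and the precise exponent in the assumption $\epsilon < (200)^{-4} d^{-6}$ is tailored to absorb the factor of $d$ accumulated over the $d$ layers.
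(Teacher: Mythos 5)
Your proof is correct, and it follows the same outer skeleton as the paper's (peel off one layer at a time with Lemma~\ref{GA:le20}, unroll the resulting product of factors $\sqrt{1/2+\epsilon}+\sqrt{2(2\epsilon+\theta_i)}$, and show the normalized product stays below $1.2$), but the key ingredient --- the bound $\theta_i = O(d\sqrt{\epsilon})$ on the angle between the intermediate activations --- is obtained by a genuinely different route. The paper gets it \emph{a priori}: it uses the deterministic angle recursion $\bar\theta_i = g(\bar\theta_{i-1})$ together with $g' \in [0,1]$ (so $\bar\theta_i \le \bar\theta_0 \le 2d\sqrt{\epsilon}$) and then imports the concentration estimate $|\theta_i - \bar\theta_i| \le 4i\sqrt{\epsilon}$ from \cite[(14)]{HV17}, so the angle control is available before the induction even starts. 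You instead bootstrap the angle bound from the inductive distance bound itself, via $\theta_{j-1} \lesssim \|u_{j-1}-v_{j-1}\|/\min(\|u_{j-1}\|,\|v_{j-1}\|)$ together with the layerwise norm lower bound from \cite[(10)]{HV17}. Your route is more self-contained (it avoids the external estimate \cite[(14)]{HV17}) at the cost of the self-consistency check you correctly flag: the uniform constant $1.2$ in the hypothesis must feed back into a uniform constant in $\theta_{j-1} \le C' d\sqrt{\epsilon}$, and the cumulative product $\exp(O(d^{3/2}\epsilon^{1/4}))$ must stay below $1.2$ for every prefix $j \le d$; your arithmetic under $\epsilon < (200)^{-4}d^{-6}$ does close this loop. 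The paper's route buys a cleaner, non-circular argument; yours buys independence from one auxiliary concentration result.
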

\begin{proof}
In this proof, we denote $\theta_{i, x, x_*}$ and $\bar{\theta}_{i, x, x_*}$ by $\theta_i$ and $\bar{\theta}_{i}$ respectively.
Since $x \in \mathcal{B}(x_*, d \sqrt{\epsilon} \|x_*\|)$, we have
\begin{equation} \label{GA:e40}
\bar{\theta}_{i} \leq \bar{\theta}_{0} \leq 2 d \sqrt{\epsilon}.
\end{equation}
By~\cite[(14)]{HV17}, we also have
$|\theta_{i} - \bar{\theta}_{i}| \leq 4 i \sqrt{\epsilon} \leq 4 d \sqrt{\epsilon}$.
It follows that
\begin{align}
2 \sqrt{\theta_i + 2 \epsilon} \leq& 2 \sqrt{\bar{\theta}_i + 4 d \sqrt{\epsilon} + 2\epsilon} \leq 2 \sqrt{2 d \sqrt{\epsilon} + 4 d \sqrt{\epsilon} + 2\epsilon} \nonumber \\
\leq& 2 \sqrt{8 d \sqrt{\epsilon}} \leq \frac{1}{30 d}. \hbox{ (by the assumption on $\epsilon$)} \label{GA:e39}
\end{align}
Note that $\sqrt{1 + 2 \epsilon} \leq 1 + \epsilon \leq 1 + \sqrt{d\sqrt{\epsilon}}$. We have
\begin{align*}
\prod_{i = d - 1}^0 \left( \sqrt{1+2\epsilon} + 2 \sqrt{ \theta_i + 2 \epsilon} \right) \leq& \left(1 + 7 \sqrt{d\sqrt{\epsilon}}\right)^d \leq 1 + 14 d \sqrt{d\sqrt{\epsilon}} \leq \frac{107}{100} < 1.2,
\end{align*}
where the second inequality is from that $(1+x)^d \leq 1 + 2dx$ if $0 < x d < 1$.
Combining the above inequality with Lemma~\ref{GA:le20} yields
\begin{align*}
\left\|\prod_{i = j}^1 W_{i, +, x}x - \prod_{i = j}^1 W_{i, +, x_*} x_*\right\| \leq \prod_{i = j - 1}^0 \left( \sqrt{\frac{1}{2}+\epsilon} + \sqrt{2} \sqrt{ \theta_i + 2 \epsilon} \right) \|x - x_*\| \leq \frac{1.2}{2^{\frac{j}{2}}} \|x - x_*\|.
\end{align*}
\end{proof}

Lemma~\ref{GA:le8} is used in proofs for Lemma~\ref{GA:le10}.

\begin{lemma}\label{GA:le8}
Suppose $x \in \mathcal{B}(x_*, d \sqrt{\epsilon} \|x_*\|)$, and
the WDC
holds with $\epsilon < 1/ (200)^4 / d^6$.  Then it holds that
\begin{equation*}
\left(\prod_{i = d}^1 W_{i, +, x}\right)^T\left[\left(\prod_{i = d}^1 W_{i, +, x}\right) x - \left(\prod_{i = d}^1 W_{i, +, x_*}\right) x_*\right] = \frac{1}{2^d} (x - x_*) + \frac{1}{2^d} \frac{1}{16} \|x - x_*\| O_1(1).
\end{equation*}
\end{lemma}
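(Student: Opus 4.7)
The plan is to apply the fundamental theorem of calculus to $G$ along the segment $y(t) := x_* + t(x - x_*)$, $t \in [0,1]$, writing $\Lambda_z := \prod_{i=d}^1 W_{i,+,z}$ for brevity. Since $G \circ y$ is continuous and piecewise-linear in $t$ with derivative $\Lambda_{y(t)}(x - x_*)$ wherever defined, FTC yields
\[
G(x) - G(x_*) = \int_0^1 \Lambda_{y(t)}(x - x_*)\, dt.
\]
Left-multiplying by $\Lambda_x^T$ converts the lemma's left-hand side into $\bigl(\int_0^1 \Lambda_x^T \Lambda_{y(t)}\, dt\bigr)(x - x_*)$, so the claim reduces to showing $\bigl\|\int_0^1 \Lambda_x^T \Lambda_{y(t)}\, dt - \frac{1}{2^d} I\bigr\| \leq \frac{1}{16 \cdot 2^d}$, which in turn follows from the pointwise operator bound $\|\Lambda_x^T \Lambda_{y(t)} - \frac{1}{2^d} I\| \leq \frac{1}{16 \cdot 2^d}$ uniformly in $t$.

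For the pointwise bound, I would split
\[
\Lambda_x^T \Lambda_{y(t)} - \frac{1}{2^d} I = (\Lambda_x - \Lambda_{y(t)})^T \Lambda_{y(t)} + \Bigl(\Lambda_{y(t)}^T \Lambda_{y(t)} - \frac{1}{2^d} I\Bigr).
\]
The second piece is handled by iterating the per-layer WDC estimate $\|W_{j,+,z}^T W_{j,+,z} - \frac{1}{2} I\| \leq \epsilon$, which composes by induction on $d$ into $\|\Lambda_z^T \Lambda_z - \frac{1}{2^d} I\| \leq O(d\epsilon)/2^d$, negligible under $\epsilon \leq 1/(200)^4/d^6$.

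For the first piece, I would use the telescope
\[
\Lambda_x - \Lambda_y = \sum_{j=1}^d W_{d,+,x}\cdots W_{j+1,+,x}(W_{j,+,x} - W_{j,+,y})\, W_{j-1,+,y}\cdots W_{1,+,y}.
\]
Each per-layer difference obeys $\|W_{j,+,x} - W_{j,+,y}\|^2 \leq \theta_j/\pi + O(\epsilon)$, obtained by expanding $(W_{j,+,x} - W_{j,+,y})^T(W_{j,+,x} - W_{j,+,y})$ as three Gram matrices and applying the WDC to each, where $\theta_j$ is the angle between the layer-$j$ inputs at $x$ and $y$. Because $y(t)$ lies on the segment from $x_*$ to $x$ and $x \in \mathcal{B}(x_*, d\sqrt{\epsilon}\,\|x_*\|)$, every $\theta_j$ is bounded by $\angle(x, y(t)) \leq \angle(x, x_*) \leq 2d\sqrt{\epsilon}$, via the non-expansivity of the angle-evolution map $g$ already used in the proof of Lemma~\ref{GA:le19}. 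Combined with $\|W_{j,+,z}\| \leq \sqrt{1/2 + \epsilon}$, the $d$-term telescope yields $\|\Lambda_x - \Lambda_y\| \leq O(d^{3/2}\epsilon^{1/4})/2^{d/2}$, and therefore $\|(\Lambda_x - \Lambda_{y(t)})^T \Lambda_{y(t)}\| \leq O(d^{3/2}\epsilon^{1/4})/2^d \leq \frac{1}{100 \cdot 2^d}$ under the $\epsilon$ assumption.

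The main obstacle will be the operator-norm telescope bound on $\Lambda_x - \Lambda_y$: the constants through the $d$-fold product must be tracked carefully so that the aggregate error stays below $\frac{1}{16 \cdot 2^d}$. The threshold $\epsilon \leq 1/(200)^4/d^6$ is tailored so that $d^{3/2}\epsilon^{1/4} \leq 1/200$, which—after accounting for the $1/\sqrt{2}$ per-layer contraction from $\|W_{j,+,z}\| \leq \sqrt{1/2 + \epsilon}$—delivers the target error with room to spare. A secondary issue is justifying the integral representation at the measure-zero set of $t$ where $G \circ y$ crosses a ReLU boundary, which is routine for piecewise-linear Lipschitz functions via FTC almost everywhere.
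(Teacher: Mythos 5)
Your argument is correct, and it reaches the estimate by a genuinely different decomposition than the paper's. The paper avoids any integral representation: writing $\Lambda_{j,k,z}=\prod_{i=j}^k W_{i,+,z}$, it uses the exact algebraic telescope $\Lambda_{d,1,x}x-\Lambda_{d,1,x_*}x_*=\Lambda_{d,1,x}(x-x_*)+\sum_{j=1}^d\Lambda_{d,j+1,x}\,(W_{j,+,x}-W_{j,+,x_*})\,\Lambda_{j-1,1,x_*}x_*$, handles the first term with the WDC product estimate of \cite[(10)]{HV17}, and bounds each summand by the \emph{vector-level} estimate $\|(W_{+,u}-W_{+,v})v\|\le\sqrt{2(2\epsilon+\angle(u,v))}\,\|u-v\|$ (the identity behind Lemma~\ref{GA:le20}) applied at the intermediate representations $u=\Lambda_{j-1,1,x}x$, $v=\Lambda_{j-1,1,x_*}x_*$, with Lemma~\ref{GA:le19} converting $\|u-v\|$ into $1.2\cdot 2^{-j/2}\|x-x_*\|$. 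You instead work at the \emph{operator} level: the FTC representation reduces the lemma to a single uniform bound $\|\Lambda_{d,1,x}^T\Lambda_{d,1,y(t)}-2^{-d}I\|\le 2^{-d}/16$, obtained from an operator-norm telescope for $\Lambda_{d,1,x}-\Lambda_{d,1,y(t)}$ with the per-layer bound $\|W_{j,+,x}-W_{j,+,y}\|^2\le 2\theta_j/\pi+4\epsilon$ (note the factor $2$, since $\|I-2Q_{x,y}\|\le(\theta+\sin\theta)/\pi$; this does not threaten your slack, as $d^{3/2}\epsilon^{1/4}\le 1/200$ leaves the aggregate error well under $2^{-d}/16$). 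Both routes rest on the same WDC ingredients and the same angle-contraction facts ($\bar\theta_i\le\bar\theta_0\le 2d\sqrt\epsilon$ plus $|\theta_i-\bar\theta_i|\le 4i\sqrt\epsilon$). What yours buys is that the factor $\|x-x_*\|$ is extracted once at the very end rather than threaded through Lemma~\ref{GA:le19}; the price is justifying the a.e.\ fundamental-theorem step for the piecewise-linear map $t\mapsto G(y(t))$ (routine, as you note, since each layer's sign pattern is piecewise constant in $t$ with finitely many breakpoints) and checking that $\angle(x,y(t))\le\angle(x,x_*)$ uniformly on the segment, which holds because the direction of $y(t)$ sweeps monotonically within the cone spanned by $x_*$ and $x$.
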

\begin{proof}
For brevity of notation, let $\Lambda_{j, k, z} = \prod_{i = j}^k W_{i, +, z}$.
We have
\begin{align}
&\Lambda_{d, 1, x}^T\left(\Lambda_{d, 1, x} x - \Lambda_{d, 1, x_*} x_*\right) \nonumber \\
=& \Lambda_{d, 1, x}^T\left[\Lambda_{d, 1, x} x - \sum_{j = 1}^d \left(\Lambda_{d, j, x} \Lambda_{j-1, 1, x_*} x_*\right)\right. + \left.\sum_{j = 1}^d \left(\Lambda_{d, j, x} \Lambda_{j-1, 1, x_*} x_*\right) - \Lambda_{d, 1, x_*} x_*\right] \nonumber \\
=& \underbrace{\Lambda_{d, 1, x}^T \Lambda_{d, 1, x} (x - x_*)}_{T_1} + \underbrace{\Lambda_{d, 1, x}^T \sum_{j = 1}^d \Lambda_{d, j + 1, x} \left( W_{j, +, x} - W_{j, +, x_*} \right) \Lambda_{j-1, 1, x_*} x_*}_{T_2}. \label{GA:e37}
\end{align}
For $T_1$, we have
\begin{align} \label{GA:e54}
T_1 = \frac{1}{2^d} (x - x_*) + \frac{4 d} {2^d} \|x - x_*\| O_1(\epsilon). \;\; \hbox{ (\cite[(10)]{HV17})}
\end{align}
For $T_2$, we have
\begin{align}
T_2 =& O_1(1) \sum_{j = 1}^d \left( \frac{1}{2^{d - \frac{j}{2}}} + \frac{(4d - 2j)\epsilon}{2^{d - \frac{j}{2}}} \right) \left\|(W_{j, +, x} - W_{j, +, x_*}) \Lambda_{j-1, 1, x_*}x_* \right\| \nonumber \\
=& O_1(1) \sum_{j = 1}^d \left( \frac{1}{2^{d - \frac{j}{2}}} + \frac{(4d - 2j)\epsilon}{2^{d - \frac{j}{2}}} \right) \left\|(\Lambda_{j-1, 1, x} x - \Lambda_{j-1, 1, x_*} x_*) \right\| \sqrt{2 (\theta_{i, x, x_*} + 2 \epsilon)} \nonumber \\
=& O_1(1) \sum_{j = 1}^d \left( \frac{1}{2^{d - \frac{j}{2}}} + \frac{(4d - 2j)\epsilon}{2^{d - \frac{j}{2}}} \right) \frac{1.2}{2^{\frac{j}{2}}} \|x - x_*\| \frac{1}{ 30 \sqrt{2} d} \nonumber \\
=& \frac{1}{16} \frac{1}{2^d} \|x - x_*\| O_1(1). \label{GA:e55}
\end{align}
where the first equation is by~\cite[(10)]{HV17}; the second equation is by~\eqref{GA:e35}; the third equation is by Lemma~\ref{GA:le19} and~\eqref{GA:e39}. The result follows from~\eqref{GA:e37}, \eqref{GA:e54} and~\eqref{GA:e55}.
\end{proof}

\end{document}